\def\thebibliograph#1#2{\section*{{\normalsize \bf #2}}\list
{[\arabic{enumi}]}{\settowidth\labelwidth{[#1]}\leftmargin\labelwidth
\advance\leftmargin\labelsep
\usecounter{enumi}}
\def\newblock{\hskip .11em plus .33em minus -.07em}
\sloppy
\sfcode`\.=1000\relax}
\numberwithin{equation}{section}
\newtheorem{theorem} {Theorem} [section]
\newtheorem{definition}{Definition} [section]
\newtheorem{corollary}{Corollary} [section]
\newtheorem{lemma}{Lemma} [section]
\newtheorem{remark}{Remark} [section]
\begin{document}

\title{Variable Besov-type spaces }
\author{ Douadi Drihem and Zeghad Zouheyr }
\date{\today }
\maketitle

\begin{abstract}
In this paper we introduce Besov-type spaces with variable smoothness and
integrability. We show that these spaces are characterized by the $\varphi $%
-transforms in appropriate sequence spaces and we obtain atomic
decompositions for these spaces. Moreover the Sobolev embeddings for these
function spaces are obtained.

\textit{MSC 2010\/}: 46E35

\textit{Key Words and Phrases}: Atom, Embeddings, Besov space, maximal
function, variable exponent.
\end{abstract}

\section{Introduction}

Besov spaces of variable smoothness and integrability, $B_{p(\cdot ),q(\cdot
)}^{\alpha (\cdot )}$, initially appeared in the paper of Almeida and H\"{a}%
st\"{o} \cite{AH}. Several basic properties were established, such as the
Fourier analytical characterization and Sobolev embeddings. When $p,q,\alpha 
$ are constants they coincide with the usual function spaces $B_{p,q}^{s}$.
Later, \cite{D3} characterized these spaces by local means and established
the atomic characterization. Afterwards, Kempka and Vyb\'{\i}ral \cite{KV122}
characterized these spaces by the ball means of differences and also by
local means, see \cite{IN14} for the duality of $B_{p(\cdot ),q(\cdot
)}^{\alpha (\cdot )}$ spaces.\vskip5pt

Variable Besov-type spaces have been introduced in \cite{D5} and \cite{D6},
where their basic properties are given, such as the Sobolev type embeddings
and \ that under some conditions these spaces are just the variable Besov
spaces. For constant exponents, these spaces unify and generalize many
classical function spaces including Besov spaces, Besov-Morrey spaces (see,
for example, \cite[Corollary 3.3]{WYY}). Independently, D. Yang, C. Zhuo and
W. Yuan, \cite{YZW15} studied these function spaces where several properties
are obtained such as atomic decomposition and the boundedness of trace
operator. Also, Tyulenev \cite{Ty151}, \cite{Ty152} has studied a new
function spaces of variable smoothness. Triebel-Lizorkin spaces with
variable smoothness and integrability $F_{p(\cdot ),q(\cdot )}^{\alpha
(\cdot )}$\ were introduced in \cite{DHR}. They proved a discretization by
the so called $\varphi $-transform. Also atomic and molecular decomposition
of these function spaces are obtained and used it to derive trace results.
Subsequently, Vyb\.{\i}ral \cite{V09} established Sobolev-Jawerth embeddings
of these spaces.

The motivation to study such function spaces comes from applications to
other fields of applied mathematics, such that fluid dynamics and image
processing, see \cite{Ru00}.

The main aim of this paper is to present another Besov-type spaces with
variable smoothness and integrability which covers Besov-type spaces with
fixed exponents. We then establish their $\varphi $-transform
characterization in the sense of Frazier and Jawerth. We also characterize
these spaces by smooth atoms and give some basic properties and Sobolev-type
embeddings.\vskip5pt

The paper is organized as follows. First we give some preliminaries where we
fix some notation and recall some basics facts on function spaces with
variable integrability\ and we give some key technical lemmas needed in the
proofs of the main statements. For making the presentation clearer, we give
the proof of some lemmas later in Section 6. We then define the Besov-type
spaces $\mathfrak{B}_{p(\cdot ),q(\cdot )}^{\alpha (\cdot ),\tau (\cdot )}$.
In this section several basic properties such as the $\varphi $-transform
characterization are obtained. In Section 4 we prove elementary embeddings
between these functions spaces, as well as Sobolev embeddings. In Section 5,
we give the atomic decomposition of $\mathfrak{B}_{p(\cdot ),q(\cdot
)}^{\alpha (\cdot ),\tau (\cdot )}$ spaces.

\section{Preliminaries}

As usual, we denote by $\mathbb{R}^{n}$ the $n$-dimensional real Euclidean
space, $\mathbb{N}$ the collection of all natural numbers and $\mathbb{N}%
_{0}=\mathbb{N}\cup \{0\}$. The letter $\mathbb{Z}$ stands for the set of
all integer numbers.\ The expression $f\lesssim g$ means that $f\leq c\,g$
for some independent constant $c$ (and non-negative functions $f$ and $g$),
and $f\approx g$ means $f\lesssim g\lesssim f$. As usual for any $x\in 
\mathbb{R}$, $[x]$ stands for the largest integer smaller than or equal to $%
x $.\vskip5pt

By supp $f$ we denote the support of the function $f$, i.e., the closure of
its non-zero set. If $E\subset {\mathbb{R}^{n}}$ is a measurable set, then $%
|E|$ stands for the (Lebesgue) measure of $E$ and $\chi _{E}$ denotes its
characteristic function.\vskip5pt

The Hardy-Littlewood maximal operator $\mathcal{M}$ is defined on $L_{%
\mathrm{loc}}^{1}(\mathbb{R}^{n})$ by%
\begin{equation*}
\mathcal{M}f(x):=\sup_{r>0}\frac{1}{\left\vert B(x,r)\right\vert }%
\int_{B(x,r)}\left\vert f(y)\right\vert dy
\end{equation*}%
and 
\begin{equation*}
M_{B}f:=\frac{1}{\left\vert B\right\vert }\int_{B}\left\vert f(y)\right\vert
dy.
\end{equation*}%
The symbol $\mathcal{S}(\mathbb{R}^{n})$ is used in place of the set of all
Schwartz functions on $\mathbb{R}^{n}$. We denote by $\mathcal{S}^{\prime }(%
\mathbb{R}^{n})$ the dual space of all tempered distributions on $\mathbb{R}%
^{n}$. The Fourier transform of a tempered distribution $f$ is denoted by $%
\mathcal{F}f$ while its inverse transform is denoted by $\mathcal{F}^{-1}f$.%
\vskip5pt

For $v\in \mathbb{Z}$ and $m=(m_{1},...,m_{n})\in \mathbb{Z}^{n}$, let $%
Q_{v,m}$ be the dyadic cube in $\mathbb{R}^{n}$, $Q_{v,m}=%
\{(x_{1},...,x_{n}):m_{i}\leq 2^{v}x_{i}<m_{i}+1,i=1,2,...,n\}$. For the
collection of all such cubes we use 
\begin{equation*}
\mathcal{Q}:=\{Q_{v,m}:v\in \mathbb{Z},m\in \mathbb{Z}^{n}\}.
\end{equation*}%
For each cube $Q$, we denote its center by $c_{Q}$, its lower left-corner by 
$x_{Q_{v,m}}=2^{-v}m$ of $Q=Q_{v,m}$ and its side length by $l(Q)$. For $r>0$%
, we denote by $rQ$ the cube concentric with $Q$ having the side length $%
rl(Q)$. Furthermore, we put $v_{Q}=-\log _{2}l(Q)$ and $v_{Q}^{+}=\max
(v_{Q},0)$.\vskip5pt

For $v\in \mathbb{Z}$, $\varphi \in \mathcal{S}(\mathbb{R}^{n})$ and $x\in 
\mathbb{R}^{n}$, we set $\widetilde{\varphi }(x):=\overline{\varphi (-x)}$, $%
\varphi _{v}(x):=2^{vn}\varphi (2^{v}x)$, and%
\begin{equation*}
\varphi _{v,m}(x):=2^{vn/2}\varphi (2^{v}x-m)=|Q_{v,m}|^{1/2}\varphi
_{v}(x-x_{Q_{v,m}})\quad \text{if\quad }Q=Q_{v,m}.
\end{equation*}

By $c$ we denote generic positive constants, which may have different values
at different occurrences. Although the exact values of the constants are
usually irrelevant for our purposes, sometimes we emphasize their dependence
on certain parameters (e.g. $c(p)$ means that $c$ depends on $p$, etc.).
Further notation will be properly introduced whenever needed.

The variable exponents that we consider are always measurable functions $p$
on $\mathbb{R}^{n}$ with range in $[c,\infty \lbrack $ for some $c>0$. We
denote the set of such functions by $\mathcal{P}_{0}$. The subset of
variable exponents with range $[1,\infty \lbrack $ is denoted by $\mathcal{P}
$. We use the standard notation $p^{-}:=\underset{x\in \mathbb{R}^{n}}{\text{%
ess-inf}}$ $p(x)$ and $p^{+}:=\underset{x\in \mathbb{R}^{n}}{\text{ess-sup }}%
p(x)$.

The variable exponent modular is defined by 
\begin{equation*}
\varrho _{p(\cdot )}(f):=\int_{\mathbb{R}^{n}}\varrho _{p(x)}(\left\vert
f(x)\right\vert )dx,
\end{equation*}%
where $\varrho _{p}(t)=t^{p}$. The variable exponent Lebesgue space $%
L^{p(\cdot )}$\ consists of measurable functions $f$ on $\mathbb{R}^{n}$
such that $\varrho _{p(\cdot )}(\lambda f)<\infty $ for some $\lambda >0$.
We define the Luxemburg (quasi)-norm on this space by the formula 
\begin{equation*}
\big\|f\big\|_{p(\cdot )}:=\inf \Big\{\lambda >0:\varrho _{p(\cdot )}\Big(%
\frac{f}{\lambda }\Big)\leq 1\Big\}.
\end{equation*}%
A useful property is that $\left\Vert f\right\Vert _{p(\cdot )}\leq 1$ if
and only if $\varrho _{p(\cdot )}(f)\leq 1$, see \cite{DHHR}, Lemma 3.2.4.

Let $p,q\in \mathcal{P}_{0}$. The mixed Lebesgue-sequence space $\ell
^{q(\cdot )}(L^{p(\cdot )})$ is defined on sequences of $L^{p(\cdot )}$%
-functions by the semi-modular%
\begin{equation*}
\varrho _{\ell ^{q(\cdot )}(L^{p\left( \cdot \right)
})}((f_{v})_{v}):=\sum_{v}\inf \Big\{\lambda _{v}>0:\varrho _{p(\cdot )}\Big(%
\frac{f_{v}}{\lambda _{v}^{\frac{1}{q(\cdot )}}}\Big)\leq 1\Big\}.
\end{equation*}%
The (quasi)-norm is defined from this as usual:%
\begin{equation}
\left\Vert \left( f_{v}\right) _{v}\right\Vert _{\ell ^{q(\cdot
)}(L^{p\left( \cdot \right) })}:=\inf \Big\{\mu >0:\varrho _{\ell ^{q(\cdot
)}(L^{p(\cdot )})}\Big(\frac{1}{\mu }(f_{v})_{v}\Big)\leq 1\Big\}.
\label{mixed-norm}
\end{equation}%
If $q^{+}<\infty $, then we can replace \eqref{mixed-norm} by the simpler
expression 
\begin{equation*}
\varrho _{\ell ^{q(\cdot )}(L^{p(\cdot )})}((f_{v})_{v}):=\sum\limits_{v}%
\big\||f_{v}|^{q(\cdot )}\big\|_{\frac{p(\cdot )}{q(\cdot )}}.
\end{equation*}%
Furthermore, if $p$ and $q$ are constants, then $\ell ^{q(\cdot
)}(L^{p(\cdot )})=\ell ^{q}(L^{p})$. The case $p:=\infty $ can be included
by replacing the last semi-modular by 
\begin{equation*}
\varrho _{\ell ^{q(\cdot )}(L^{\infty })}((f_{v})_{v}):=\sum\limits_{v}\big\|%
\left\vert f_{v}\right\vert ^{q(\cdot )}\big\|_{\infty }.
\end{equation*}%
It is known, cf. \cite[Theorem 3.6]{AH} and \cite[Theorem 1]{KV121}, that $%
\ell ^{q(\cdot )}(L^{p(\cdot )})$ is a norm if $q(\cdot )\geq 1$ is constant
almost everywhere (a.e.) on $\mathbb{R}^{n}$ and $p(\cdot )\geq 1$, or if $%
\frac{1}{p(x)}+\frac{1}{q(x)}\leq 1$ a.e. on $\mathbb{R}^{n}$, or if $1\leq
q(x)\leq p(x)\leq \infty $ a.e. on $\mathbb{R}^{n}$.

We say that $g:\mathbb{R}^{n}\rightarrow \mathbb{R}$ is \textit{locally }log%
\textit{-H\"{o}lder continuous}, abbreviated $g\in C_{\text{loc}}^{\log }$,
if there exists $c_{\log }(g)>0$ such that%
\begin{equation}
\left\vert g(x)-g(y)\right\vert \leq \frac{c_{\log }(g)}{\log (e+\frac{1}{%
\left\vert x-y\right\vert })}  \label{lo-log-Holder}
\end{equation}%
for all $x,y\in \mathbb{R}^{n}$. We say that $g$ satisfies the log\textit{-H%
\"{o}lder decay condition}, if there exists $g_{\infty }\in \mathbb{R}$ and
a constant $c_{\log }>0$ such that%
\begin{equation*}
\left\vert g(x)-g_{\infty }\right\vert \leq \frac{c_{\log }}{\log
(e+\left\vert x\right\vert )}
\end{equation*}%
for all $x\in \mathbb{R}^{n}$. We say that $g$ is \textit{globally}-log%
\textit{-H\"{o}lder continuous}, abbreviated $g\in C^{\log }$, if it is%
\textit{\ }locally log-H\"{o}lder continuous and satisfies the log-H\"{o}%
lder decay\textit{\ }condition.\textit{\ }The constants $c_{\log }(g)$ and $%
c_{\log }$ are called the \textit{locally }log\textit{-H\"{o}lder constant }%
and the log\textit{-H\"{o}lder decay constant}, respectively\textit{.} We
note that all functions $g\in C_{\text{loc}}^{\log }$ always belong to $%
L^{\infty }$.\vskip5pt

We define the following class of variable exponents%
\begin{equation*}
\mathcal{P}^{\mathrm{log}}:=\Big\{p\in \mathcal{P}:\frac{1}{p}\in C^{\log }%
\Big\},
\end{equation*}%
were introduced in \cite[Section \ 2]{DHHMS}. We define $\frac{1}{p_{\infty }%
}:=\lim_{|x|\rightarrow \infty }\frac{1}{p(x)}$\ and we use the convention $%
\frac{1}{\infty }=0$. Note that although $\frac{1}{p}$ is bounded, the
variable exponent $p$ itself can be unbounded. It was shown in \cite{DHHR}%
\textrm{, }Theorem 4.3.8 that $\mathcal{M}:L^{p(\cdot )}\rightarrow
L^{p(\cdot )}$ is bounded if $p\in \mathcal{P}^{\mathrm{log}}$ and $p^{-}>1$%
, see also \cite{DHHMS}, Theorem 1.2.\ Also if $p\in \mathcal{P}^{\mathrm{log%
}}$, then the convolution with a radially decreasing $L^{1}$-function is
bounded on $L^{p(\cdot )}$: 
\begin{equation*}
\big\|\varphi \ast f\big\|_{{p(\cdot )}}\leq c\big\|\varphi \big\|_{{1}}%
\big\|f\big\|_{{p(\cdot )}}.
\end{equation*}%
We also refer to the papers \cite{CFMP} and \cite{Di}\textrm{,} where
various results on maximal function in variable Lebesgue spaces were
obtained.\vskip5pt

It is known that for $p\in \mathcal{P}^{\mathrm{log}}$ we have%
\begin{equation}
\big\|\chi _{B}\big\|_{{p(\cdot )}}\big\|\chi _{B}\big\|_{{p}^{\prime }{%
(\cdot )}}\approx |B|.  \label{DHHR}
\end{equation}%
with constants only depending on the $\log $-H\"{o}lder constant of $p$
(see, for example, \cite[Section 4.5]{DHHR}). Here ${p}^{\prime }$ denotes
the conjugate exponent of $p$ given by $\frac{1}{{p(\cdot )}}+\frac{1}{{p}%
^{\prime }{(\cdot )}}=1$. \vskip5pt

Recall that $\eta _{v,m}(x):=2^{nv}(1+2^{v}\left\vert x\right\vert )^{-m}$,
for any $x\in \mathbb{R}^{n}$, $v\in \mathbb{N}_{0}$ and $m>0$. Note that $%
\eta _{v,m}\in L^{1}$ when $m>n$ and that $\big\|\eta _{v,m}\big\|_{1}=c_{m}$
is independent of $v$, where this type of function was introduced in \cite%
{HN07} and \cite{DHHR}.

\subsection{Some technical lemmas}

In this subsection we present some results which are useful for us. The
following lemma is from \cite[Lemma 19]{KV122}, see also \cite[Lemma 6.1]%
{DHR}.

\begin{lemma}
\label{DHR-lemma}Let $\alpha \in C_{\mathrm{loc}}^{\log }$ and let $R\geq
c_{\log }(\alpha )$, where $c_{\log }(\alpha )$ is the constant from %
\eqref{lo-log-Holder} for $\alpha $. Then%
\begin{equation*}
2^{v\alpha (x)}\eta _{v,m+R}(x-y)\leq c\text{ }2^{v\alpha (y)}\eta
_{v,m}(x-y)
\end{equation*}%
with $c>0$ independent of $x,y\in \mathbb{R}^{n}$ and $v,m\in \mathbb{N}%
_{0}. $
\end{lemma}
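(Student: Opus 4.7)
By the identity $\eta_{v,m+R}(z) = \eta_{v,m}(z)(1+2^v|z|)^{-R}$, the stated inequality is equivalent to the pointwise bound
\[
2^{v(\alpha(x) - \alpha(y))} \leq c\,(1+2^v|x-y|)^{R}, \qquad x,y \in \mathbb{R}^n,\ v \in \mathbb{N}_0,
\]
uniformly in $m$. My approach is to prove this in two regimes depending on whether $|x-y|$ is small or large compared with the natural scale $2^{-v}$.

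For the small-scale regime $|x-y|\le 2^{-v}$, I have $1/|x-y|\ge 2^v$, hence $\log(e+1/|x-y|)\ge v\log 2$ (with the case $v=0$ being trivial). Inserting this into the log-Hölder estimate \eqref{lo-log-Holder} gives
\[
v|\alpha(x)-\alpha(y)|\le\frac{v\,c_{\log}(\alpha)}{\log(e+1/|x-y|)}\le\frac{c_{\log}(\alpha)}{\log 2},
\]
so $2^{v(\alpha(x)-\alpha(y))}$ is controlled by a universal constant while $(1+2^v|x-y|)^R\ge 1$, and the desired inequality follows at once.

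For the large-scale regime $|x-y|>2^{-v}$, set $s:=\max(0,-\log_2|x-y|)\in[0,v)$. Then $\log(1+2^v|x-y|)\ge (v-s)\log 2$ and $\log(e+1/|x-y|)\ge\log(e+2^s)$, so after taking natural logarithms the target bound reduces to verifying
\[
\frac{v\,c_{\log}(\alpha)\log 2}{\log(e+2^s)}\le R(v-s)\log 2 + \log c.
\]
Using $\log(e+2^s)\ge\log(e+1)$ for small $s$ and $\log(e+2^s)\ge s\log 2$ for larger $s$, the condition $R\ge c_{\log}(\alpha)$ yields the bound on each sub-range, with any residual additive constants absorbed by choosing $c$ large enough (depending only on $c_{\log}(\alpha)$ and $R$). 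The case $|x-y|\ge 1$ is the easiest: there $s=0$, $v-s=v$, and the log-Hölder inequality directly gives $v|\alpha(x)-\alpha(y)|\le vc_{\log}(\alpha)/\log(e+1)$, which is dominated by $Rv\log 2$ up to an additive constant.

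\textbf{Main obstacle.} The delicate point is the large-scale regime near the extremes $s\to 0^+$ (with $|x-y|$ close to $1$) and $s\to v^-$ (with $|x-y|$ close to $2^{-v}$): in both limits the two sides of the inequality grow linearly in $v$, and the hypothesis $R\ge c_{\log}(\alpha)$ is exactly what leaves enough slack to close the estimate. Outside this transitional regime the decay rates differ, so the inequality holds with considerable room to spare.
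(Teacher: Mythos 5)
The paper does not actually prove this lemma; it quotes it from \cite[Lemma 19]{KV122} and \cite[Lemma 6.1]{DHR}, and your argument is essentially the standard proof from those sources: cancel $\eta_{v,m}$ to reduce the claim to $2^{v(\alpha(x)-\alpha(y))}\leq c\,(1+2^{v}|x-y|)^{R}$ uniformly in $m$, and split according to whether $|x-y|\leq 2^{-v}$ or not. Your small-scale case is correct, and the reduction of the large-scale case to $\frac{v\,c_{\log}(\alpha)\log 2}{\log (e+2^{s})}\leq R(v-s)\log 2+\log c$ with $|x-y|=2^{-s}$, $0\le s<v$, is the right target, so the approach is sound. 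Two points in the write-up need repair, though neither is fatal. First, for $|x-y|\geq 1$ you invoke $\log (e+1/|x-y|)\geq \log (e+1)$, which is false in that range since $t\mapsto\log(e+1/t)$ is decreasing; the correct bound is $\log (e+1/|x-y|)\geq \log e=1$, giving $v|\alpha(x)-\alpha(y)|\leq v\,c_{\log}(\alpha)$, which still closes this case because $R\log (1+2^{v}|x-y|)\geq Rv\log 2\geq c_{\log}(\alpha)\,v\log 2$. (The same caveat applies to your blanket claim $\log(e+1/|x-y|)\ge\log(e+2^{s})$, which is an equality only for $2^{-v}<|x-y|\le 1$; since you treat $|x-y|\ge1$ separately this is harmless.) Second, the sentence asserting that the two crude bounds ``yield the bound on each sub-range'' conceals the only real computation: with $\log (e+2^{s})\geq s\log 2$ the quantity $\frac{v}{s\log 2}-(v-s)$ is of size $cv$ near $s=1$, so the split must be made at a fixed $s_{0}\geq 1/\log 2$ (say $s_{0}=2$). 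Then for $s\leq s_{0}$ one has $\frac{v}{\log (e+1)}-(v-s)\leq s_{0}$ because $\log (e+1)>1$, while for $s_{0}\leq s\leq v$ the convex function $s\mapsto \frac{v}{s\log 2}+s-v$ is maximized at the endpoints, where it is at most $\max\bigl(s_{0},\tfrac{1}{\log 2}\bigr)$; multiplying by $c_{\log}(\alpha)\leq R$ and absorbing these constants into $c$ finishes the argument. Spelling this out (or at least fixing the threshold) is necessary for your sub-range claim to be literally true. Finally, your ``main obstacle'' remark misreads the regime $s\to v^{-}$: there the left-hand side stays bounded (it tends to $c_{\log}(\alpha)$), so nothing grows linearly in $v$; the linear-in-$v$ competition occurs only when $s$ remains bounded, and there the slack comes from $\log (e+1)>1$ together with $R\geq c_{\log}(\alpha)$.
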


The previous lemma allows us to treat the variable smoothness in many cases
as if it were not variable at all, namely we can move the term inside the
convolution as follows:%
\begin{equation*}
2^{v\alpha (x)}\eta _{v,m+R}\ast f(x)\leq c\text{ }\eta _{v,m}\ast
(2^{v\alpha (\cdot )}f)(x),\quad x\in \mathbb{R}^{n},
\end{equation*}%
where $c>0$ is independent of $v$ and $m$.

\begin{lemma}
\label{r-trick}Let $r,R,N>0$, $m>n$ and $\theta ,\omega \in \mathcal{S}%
\left( \mathbb{R}^{n}\right) $ with $\mathrm{supp}\mathcal{F}\omega \subset 
\overline{B(0,1)}$. Then there exists $c=c(r,m,n)>0$ such that for all $g\in 
\mathcal{S}^{\prime }\left( \mathbb{R}^{n}\right) $, we have%
\begin{equation}
\left\vert \theta _{R}\ast \omega _{N}\ast g\left( x\right) \right\vert \leq
c\text{ }A(\eta _{N,m}\ast \left\vert \omega _{N}\ast g\right\vert
^{r}(x))^{1/r},\quad x\in \mathbb{R}^{n},  \label{r-trick-est}
\end{equation}%
where $\theta _{R}=R^{n}\theta (R\cdot )$, $\omega _{N}=N^{n}\omega (N\cdot
) $, $\eta _{N,m}:=N^{n}(1+N\left\vert \cdot \right\vert )^{-m}$ and 
\begin{equation*}
A=\max \left( 1,\left( NR^{-1}\right) ^{m}\right) .
\end{equation*}
\end{lemma}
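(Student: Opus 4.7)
Set $f := \omega_{N}\ast g$. Since $\mathrm{supp}\,\mathcal{F}\omega\subset\overline{B(0,1)}$, we have $\mathrm{supp}\,\mathcal{F}f\subset\overline{B(0,N)}$, so $f$ is band-limited at level $N$. The plan is a Peetre maximal-function argument (the ``$r$-trick''): first I would prove a pointwise Plancherel--P\'olya--Nikol'ski\u{\i}-type bound for arbitrary band-limited distributions, then estimate the convolution against $\theta_{R}$ by an elementary change of variables.

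The auxiliary pointwise estimate to establish is: for every $r>0$, $m>n$, and every distribution $h$ with $\mathrm{supp}\,\mathcal{F}h\subset\overline{B(0,N)}$,
\begin{equation*}
|h(y)|^{r}\leq c\,\eta_{N,m}\ast|h|^{r}(y),\qquad y\in\mathbb{R}^{n}.
\end{equation*}
Pick $\Psi\in\mathcal{S}$ with $\mathcal{F}\Psi\equiv 1$ on $\overline{B(0,1)}$; then $h=\Psi_{N}\ast h$ with $\Psi_{N}:=N^{n}\Psi(N\cdot)$, and $|\Psi_{N}|\lesssim\eta_{N,m}$ for any $m$. For $r\geq 1$ the inequality follows from H\"older's inequality applied to this reproducing integral. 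For $r<1$ I would invoke the $r$-trick proper: with the Peetre maximal function $h_{K}^{\ast}(y):=\sup_{z}|h(y-z)|/(1+N|z|)^{K}$ and some $K>n/r$, split $|h|=|h|^{r}|h|^{1-r}$ inside the reproducing integral, estimate $|h|^{1-r}\leq(h_{K}^{\ast})^{1-r}(1+N|\cdot|)^{K(1-r)}$, and absorb the $(h_{K}^{\ast})^{1-r}$ back after taking the supremum in $z$. I expect this absorption step (for $r<1$) to be the main technical subtlety.

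Next, the submultiplicativity $1+N|x-w|\leq(1+N|x-y|)(1+N|y-w|)$ yields $\eta_{N,m}\ast|f|^{r}(y)\leq(1+N|x-y|)^{m}\,\eta_{N,m}\ast|f|^{r}(x)$, which combined with the auxiliary bound gives the shifted form
\begin{equation*}
|f(y)|\leq c(1+N|x-y|)^{m/r}\bigl(\eta_{N,m}\ast|f|^{r}(x)\bigr)^{1/r}.
\end{equation*}
Substituting this into $|\theta_{R}\ast f(x)|\leq\int|\theta_{R}(x-y)||f(y)|\,dy$ and pulling the pointwise factor out, it remains to bound $\int|\theta_{R}(x-y)|(1+N|x-y|)^{m/r}\,dy$. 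The substitution $u=R(x-y)$ converts this into $\int|\theta(u)|(1+(N/R)|u|)^{m/r}\,du$, which by the elementary $(1+at)\leq\max(1,a)(1+t)$ together with the Schwartz decay of $\theta$ is dominated by $c\max(1,(N/R)^{m/r})$. Tracking the exponents in the Peetre step (taking $K=m$ where permissible, and absorbing dependence on $r$ into the constant $c=c(r,m,n)$) then produces the factor $A=\max(1,(N/R)^{m})$ and yields \eqref{r-trick-est}.
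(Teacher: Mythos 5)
Your overall route --- reduce to the band-limited function $f=\omega _{N}\ast g$, prove a Plancherel--P\'olya--Nikolskij/Peetre bound for it, shift that bound via submultiplicativity of $(1+N|\cdot |)$, and then estimate a weighted $L^{1}$-norm of $\theta _{R}$ --- is the standard one; note the paper gives no argument here at all (it defers to \cite[Lemma 2.2]{D6}), and your plan is in the expected spirit of that proof. The first part is sound: the auxiliary bound $|h(y)|\leq c\,(\eta _{N,m}\ast |h|^{r}(y))^{1/r}$ for $\mathrm{supp}\,\mathcal{F}h\subset \overline{B(0,N)}$, equivalently the Peetre estimate $\sup_{y}|f(y)|(1+N|x-y|)^{-m/r}\leq c\,(\eta _{N,m}\ast |f|^{r}(x))^{1/r}$, is correct for every $m>n$ (with the usual care about finiteness of the maximal function when $r<1$, which you flag).

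The gap is in your last step, the exponent bookkeeping. After the shifted bound $|f(y)|\leq c\,(1+N|x-y|)^{m/r}(\eta _{N,m}\ast |f|^{r}(x))^{1/r}$, the remaining integral is $\int |\theta (u)|(1+(N/R)|u|)^{m/r}du\leq c\max \bigl(1,(NR^{-1})^{m/r}\bigr)$, so your argument produces the factor $\max \bigl(1,(NR^{-1})^{m/r}\bigr)$, not $A=\max \bigl(1,(NR^{-1})^{m}\bigr)$; moreover the constant necessarily involves a Schwartz seminorm of $\theta $, not just $r,m,n$. The suggestion to ``take $K=m$ where permissible'' cannot repair this: in the Peetre inequality the kernel exponent is tied to the weight exponent through $f_{a}^{\ast }\lesssim (\eta _{N,ar}\ast |f|^{r})^{1/r}$ with $ar>n$, so once the kernel is fixed as $\eta _{N,m}$ the weight exponent is forced to be $m/r$. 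For $r\geq 1$ this is harmless, since $(N/R)^{m/r}\leq (N/R)^{m}$ and your proof then gives the lemma (indeed a stronger bound). For $r<1$, however, no bookkeeping can yield $A$: with $n=1$, $m=3$, $r=1/2$, $R=1$, take $f(y)=\bigl(\sin (Ny/12)/(Ny/12)\bigr)^{6}$ (which is of the form $\omega _{N}\ast g$ for suitable $\omega ,g$), $\theta \geq 1$ on $[-2,2]$ and $x=1$; then $\theta _{R}\ast f(1)\gtrsim N^{-1}$ while $N^{m}\bigl(\eta _{N,m}\ast f^{1/2}(1)\bigr)^{2}\lesssim N^{3}\cdot N^{-6}=N^{-3}$, so the inequality with $A=(N/R)^{m}$ fails for large $N$; in general the sharp factor is at least of order $(N/R)^{m/r-n}$. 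So, as written, your proposal proves the stated lemma only for $r\geq 1$; for all $r>0$ it proves the variant with $A=\max \bigl(1,(NR^{-1})^{m/r}\bigr)$, which coincides with the stated one when $R\geq N$ --- in particular in every application of the lemma in this paper, where $R=N$ and $A=1$. You should either restrict to $r\geq 1$ or record the $m/r$-version of the factor rather than claim $A$ as stated.
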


The proof of this lemma is given in \cite[Lemma 2.2]{D6}.

We will make use of the following statement, see \cite{DHHMS}, Lemma 3.3.

\begin{lemma}
\label{DHHR-estimate}Let $p\in \mathcal{P}^{\mathrm{log}}$. Then for every $%
m>0$ there exists $\beta \in \left( 0,1\right) $ only depending on $m$ and $%
c_{\mathrm{log}}\left( p\right) $ such that%
\begin{eqnarray*}
&&\Big(\frac{\beta }{\left\vert Q\right\vert }\int_{Q}\left\vert
f(y)\right\vert dy\Big)^{p\left( x\right) } \\
&\leq &\frac{1}{\left\vert Q\right\vert }\int_{Q}\left\vert f(y)\right\vert
^{p\left( y\right) }dy \\
&&+\min \left( \left\vert Q\right\vert ^{m},1\right) \Big(\frac{1}{%
\left\vert Q\right\vert }\int_{Q}\left( \left( e+\left\vert x\right\vert
\right) ^{-m}+\left( e+\left\vert y\right\vert \right) ^{-m}\right) dy\Big),
\end{eqnarray*}%
for every cube $\mathrm{(}$or ball$\mathrm{)}$ $Q\subset \mathbb{R}^{n}$,
all $x\in Q\subset \mathbb{R}^{n}$and all $f\in L^{p\left( \cdot \right)
}+L^{\infty }$\ such that $\big\|f\big\|_{L^{p\left( \cdot \right)
}+L^{\infty }}\leq 1$.
\end{lemma}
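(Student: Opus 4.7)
The plan is to exploit the log-H\"older structure of $p$ to transfer the variable exponent from $p(x)$ to $p(y)$ inside the averaging integral, after a preliminary decomposition of $f$ and a Jensen-type step.

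First, I would write $f=f_{1}+f_{2}$ with $\varrho_{p(\cdot)}(f_{1})\le 1$ and $\|f_{2}\|_{\infty}\le 1$, which is possible by the hypothesis $\|f\|_{L^{p(\cdot)}+L^{\infty}}\le 1$. A standard convexity inequality of the form $(a+b)^{p(x)}\le 2^{p(x)}(a^{p(x)}+b^{p(x)})$ reduces the claim to proving the same estimate separately for each $f_{i}$, at the cost of shrinking $\beta$ by a constant factor. The piece $f_{2}$ is immediate: since $|f_{2}|\le 1$, the averaged $p(x)$-power of $\beta|f_{2}|$ is at most $\beta^{p^{-}}$, which can be absorbed into the error term on the right-hand side provided $\beta$ is taken small enough.

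For $f_{1}$ I would apply Jensen's inequality to the convex map $t\mapsto t^{p(x)}$ (after reducing to $p(x)\ge 1$ by passing to $\max(p,1)$, which only affects constants), giving
\begin{equation*}
\Big(\frac{\beta}{|Q|}\int_{Q}|f_{1}(y)|\,dy\Big)^{p(x)}\le\frac{1}{|Q|}\int_{Q}\bigl(\beta|f_{1}(y)|\bigr)^{p(x)}\,dy,
\end{equation*}
and then the core of the argument is to establish the pointwise exponent exchange
\begin{equation*}
\bigl(\beta|f_{1}(y)|\bigr)^{p(x)}\le|f_{1}(y)|^{p(y)}+\min(|Q|^{m},1)\bigl((e+|x|)^{-m}+(e+|y|)^{-m}\bigr).
\end{equation*}
I would prove this by splitting into the cases $|f_{1}(y)|\le 1$ and $|f_{1}(y)|>1$, and in each case invoking both halves of $p\in\mathcal{P}^{\log}$: the local log-H\"older bound $|p(x)-p(y)|\le c_{\log}(p)/\log(e+|Q|^{-1/n})$ for $x,y\in Q$, which tames the factor $|Q|^{-|p(x)-p(y)|}$ uniformly when $|Q|\le 1$, and the log-H\"older decay bound which compares $p(x)$ and $p(y)$ to $p_{\infty}$ at the scales $\log(e+|x|)^{-1}$ and $\log(e+|y|)^{-1}$, which is exactly what generates the error terms $(e+|x|)^{-m}$ and $(e+|y|)^{-m}$.

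The main obstacle is the exchange step in the subcase $|f_{1}(y)|>1$: there the factor $\beta^{p(x)}$ cannot simply be discarded, and the exponents $p(x)$ and $p(y)$ must both be compared to $p_{\infty}$ using the decay estimate simultaneously with the local log-H\"older estimate on $Q$. The factor $\min(|Q|^{m},1)$ then encodes the natural dichotomy $|Q|\le 1$ versus $|Q|>1$: in the first regime local log-H\"older drives the exchange and the extra factor $|Q|^{m}$ is free, while in the second a cruder $O(1)$ error suffices since the bounded contribution of $f_{2}$ already sits at that order. Choosing $\beta\in(0,1)$ small depending only on $m$ and $c_{\log}(p)$ then closes the argument, and the cube-versus-ball distinction in the statement costs only a comparability constant that is absorbed in $\beta$.
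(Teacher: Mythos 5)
The paper offers no proof of this lemma at all: it is quoted verbatim from \cite{DHHMS}, Lemma 3.3 (see also Section 4.2 of \cite{DHHR}), so your argument has to stand on its own, and it has two genuine gaps. First, the bounded piece $f_{2}$ is not ``immediate''. The bound $\bigl(\tfrac{\beta}{|Q|}\int_{Q}|f_{2}|\,dy\bigr)^{p(x)}\le\beta^{p^{-}}$ is a fixed positive constant once $\beta$ is fixed, whereas the error term $\min(|Q|^{m},1)\tfrac{1}{|Q|}\int_{Q}\bigl((e+|x|)^{-m}+(e+|y|)^{-m}\bigr)dy$ is not bounded below: it is at most $2\min(|Q|^{m},1)$, hence tends to $0$ as $|Q|\to0$, and it also tends to $0$ when $Q$ moves off to infinity. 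No choice of $\beta$ lets you absorb a fixed constant into it. In fact the values of $|f|$ below $1$ (so all of $f_{2}$, and the small values of $f_{1}$) are precisely the regime where the decay half of the log-H\"older condition has to do real work, via the dichotomy ``either $(\beta t)^{p(x)}$ is already below the error term, or $t$ is bounded below by suitable powers of $(e+|x|)^{-1}$ and $\min(|Q|,1)$, and then $t^{p(x)-p(y)}$ is controlled''; this is also where the factor $\min(|Q|^{m},1)$ is earned.

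Second, and more seriously, the ``core'' pointwise exchange you propose for $f_{1}$ is false in the subcase $|f_{1}(y)|>1$. Log-H\"older continuity permits $p(x)-p(y)=\delta>0$ for $x,y$ in a cube of side length comparable to $1$, and then $(\beta t)^{p(x)}/t^{p(y)}=\beta^{p(x)}t^{\delta}\to\infty$ as $t\to\infty$, so no $\beta\in(0,1)$ makes $(\beta t)^{p(x)}\le t^{p(y)}+C(x,y,Q)$ hold for all $t>1$ with an error independent of $t$. Even the averaged inequality you would need after Jensen at the exponent $p(x)$ fails: with $p\equiv p(x)-\delta$ on a portion $E$ of $Q$ and $f_{1}=t\chi_{E}$, $|E|=t^{-p(x)+\delta}$, one has $\varrho_{p(\cdot)}(f_{1})\le1$, yet $\tfrac{1}{|Q|}\int_{Q}(\beta|f_{1}|)^{p(x)}dy=\beta^{p(x)}t^{\delta}/|Q|\to\infty$ while the right-hand side of the lemma stays bounded (the lemma itself survives because its left-hand side is $\approx\beta^{p(x)}t^{-(p(x)-\delta)(p(x)-1)}$, which is small). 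The reason the statement is true is that on the set $\{|f|>1\}$ Jensen must be applied with the small exponent $p_{Q}^{-}:=\inf_{Q}p$, not with $p(x)$: there $|f|^{p_{Q}^{-}}\le|f|^{p(y)}$, the modular bound gives $\tfrac{1}{|Q|}\int_{Q}|f|^{p(y)}dy\le\max(1,|Q|^{-1})$, and the surplus exponent $(p(x)-p_{Q}^{-})/p_{Q}^{-}\lesssim1/\log(e+1/\ell(Q))$ from the local log-H\"older estimate tames the factor $|Q|^{-(p(x)-p_{Q}^{-})/p_{Q}^{-}}$; this part needs neither the decay condition nor the error terms. So your assignment of roles is reversed relative to the actual argument of \cite{DHHMS}: the large values are handled by Jensen at $p_{Q}^{-}$ plus local log-H\"older only, while the small and bounded values are where the decay condition and the $\min(|Q|^{m},1)$-weighted error terms are indispensable.
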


Notice that in the proof of this lemma we need only that 
\begin{equation*}
\int_{Q}\left\vert f(y)\right\vert ^{p\left( y\right) }dy\leq 1
\end{equation*}%
and/or $\left\Vert f\right\Vert _{\infty }\leq 1$. We set%
\begin{equation*}
\big\|\left( f_{v}\right) _{v}\big\|_{\ell ^{q(\cdot )}(L_{p(\cdot )}^{\tau
(\cdot )})}:=\sup_{P\in \mathcal{Q}}\Big\|\Big(\frac{f_{v}}{|P|^{\tau (\cdot
)}}\chi _{P}\Big)_{v\geq v_{P}^{+}}\Big\|_{\ell ^{q(\cdot )}(L^{p(\cdot )})},
\end{equation*}%
where, $v_{P}=-\log _{2}l(P)$ and $v_{P}^{+}=\max (v_{P},0)$.

The following lemma is the $\ell ^{q(\cdot )}(L_{p(\cdot )}^{\tau (\cdot )})$%
-version of Lemma 4.7 from Almeida and H\"{a}st\"{o} \cite{AH} (we use it,
since the maximal operator is in general not bounded on $\ell ^{q(\cdot
)}(L^{p(\cdot )})$, see \cite[Example 4.1]{AH}).

\begin{lemma}
\label{Alm-Hastolemma1}Let $\mathbb{\tau }\in C_{\mathrm{loc}}^{\log }$, $%
\tau ^{-}>0$, $p\in \mathcal{P}^{\log }$, $q\in \mathcal{P}_{0}^{\log }$
with $0<q^{-}\leq q^{+}<\infty $ and $\tau ^{+}<\left( \tau p\right) ^{-}$.
For any $m$ large enough, there exists $c>0$ such that%
\begin{equation*}
\big\|(\eta _{v,m}\ast f_{v})_{v}\big\|_{\ell ^{q(\cdot )}(L_{p(\cdot
)}^{\tau (\cdot )})}\leq c\big\|(f_{v})_{v}\big\|_{\ell ^{q(\cdot
)}(L_{p(\cdot )}^{\tau (\cdot )})}
\end{equation*}%
for any $(f_{v})_{v}\in \ell ^{q(\cdot )}(L_{p(\cdot )}^{\tau (\cdot )})$.
\end{lemma}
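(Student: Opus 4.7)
Fix $P\in\mathcal{Q}$; by definition of the norm, it suffices to bound, uniformly in $P$, the localized quantity
\[
S_P:=\Big\|\Big(\frac{\chi_P\,(\eta_{v,m}\ast f_v)}{|P|^{\tau(\cdot)}}\Big)_{v\geq v_P^+}\Big\|_{\ell^{q(\cdot)}(L^{p(\cdot)})}
\]
by $\|(f_v)_v\|_{\ell^{q(\cdot)}(L_{p(\cdot)}^{\tau(\cdot)})}$. The strategy is the standard Morrey-type localization: I write $f_v=f_v^{(0)}+\sum_{k\geq 1}f_v^{(k)}$ with $f_v^{(0)}=f_v\chi_{2P}$ and $f_v^{(k)}=f_v\chi_{2^{k+1}P\setminus 2^{k}P}$, and bound the contributions to $S_P$ of each piece separately.

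For the local piece $f_v^{(0)}$, I would invoke Lemma 4.7 of \cite{AH}, which asserts that $(g_v)_v\mapsto(\eta_{v,m}\ast g_v)_v$ is bounded on $\ell^{q(\cdot)}(L^{p(\cdot)})$ for $m$ sufficiently large. Applied to $(f_v\chi_{2P}/|P|^{\tau(\cdot)})_v$, and combined with the locally log-Hölder regularity of $\tau$ (which makes $|P|^{\tau(x)}\approx|P|^{\tau(y)}$ whenever $x,y\in 2P$) and a covering of $2P$ by $\leq 2^n$ dyadic cubes of side $\ell(P)$, this piece is bounded by a constant multiple of $\|(f_v)_v\|_{\ell^{q(\cdot)}(L_{p(\cdot)}^{\tau(\cdot)})}$.

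For the annular pieces $k\geq 1$, the essential pointwise observation is that for $x\in P$ and $y\in 2^{k+1}P\setminus 2^{k}P$ one has $|x-y|\gtrsim 2^{k}\ell(P)$, which together with $v\geq v_P$ (so $2^v\ell(P)\geq 1$) yields the decay estimate
\[
\eta_{v,m}\ast f_v^{(k)}(x)\;\lesssim\;2^{-k(m-n)}\,M_{2^{k+1}P}|f_v|,\qquad x\in P.
\]
Lemma~\ref{DHHR-estimate} converts the average on the right into an $L^{p(\cdot)}$-controlled quantity; rewriting $|P|^{-\tau(\cdot)}=2^{nk\tau(\cdot)}|2^{k+1}P|^{-\tau(\cdot)}$ and bounding the resulting expression by the supremum that defines $\|\cdot\|_{\ell^{q(\cdot)}(L_{p(\cdot)}^{\tau(\cdot)})}$ taken at a dyadic cube of side $\sim 2^{k+1}\ell(P)$ containing $2^{k+1}P$, one obtains the estimate $c\,2^{-k(m-n-n\tau^+)}\|(f_v)_v\|_{\ell^{q(\cdot)}(L_{p(\cdot)}^{\tau(\cdot)})}$ for the $k$-th piece. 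The sum over $k\geq 1$ converges once $m$ is chosen larger than $n+n\tau^+$.

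The main obstacle will be absorbing the pointwise, $x$-dependent factor $2^{nk\tau(x)}$ uniformly inside the variable $L^{p(\cdot)}$- and $\ell^{q(\cdot)}$-norms: the simultaneous variability of $\tau$, $p$, and $q$ forces one to work at the modular level and to use Lemma~\ref{DHHR-estimate} (possibly iterated) together with the locally log-Hölder regularity of $\tau$ and the hypothesis $\tau^+<(\tau p)^-$. This last hypothesis is what ensures that the exponent relations survive the localization, the rewriting of $|P|^{-\tau(\cdot)}$, and the geometric summation in $k$ at the $\ell^{q(\cdot)}$-level; once this is handled, the supremum over $P$ returns the desired bound.
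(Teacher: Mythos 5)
Your Morrey-type outline (local piece plus annuli, with the decay of $\eta_{v,m}$ giving a factor $2^{-k(m-n)}$) is reasonable in spirit, but there is a genuine gap at the point where you dispose of the weight $|P|^{-\tau(\cdot)}$. You claim that local log-H\"older continuity of $\tau$ gives $|P|^{\tau(x)}\approx|P|^{\tau(y)}$ for $x,y\in 2P$, and you use this to reduce the local piece to Lemma 4.7 of \cite{AH}. That claim is false for large cubes: for $|x-y|\geq 1$ the log-H\"older estimate only yields $|\tau(x)-\tau(y)|\leq c_{\log }(\tau )$, so $|P|^{\tau(x)-\tau(y)}$ can be of size $|P|^{c_{\log }(\tau )}$, which is unbounded as $l(P)\to\infty$ (no decay condition on $\tau$ is assumed, and e.g.\ a smooth periodic $\tau$ already defeats the freezing). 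For the same reason one cannot move $|P|^{-\tau(\cdot)}$ through the convolution via Lemma \ref{DHR-lemma} when $v_{P}<0$: that lemma is only valid for nonnegative dyadic levels. So your argument really covers only $l(P)\leq 1$, which is precisely the easy case in the paper's Appendix proof (there $|P|^{-\tau (x)}\leq c\,|P|^{-\tau (y)}(1+2^{v_{P}}|x-y|)^{c_{\log }(\tau )}$ is available). The whole difficulty, and the only place where the hypothesis $\tau ^{+}<(\tau p)^{-}$ is actually used, is the case $l(P)>1$: the paper then decomposes the convolution at the kernel scale $2^{-v}$ (cubes $3Q_{v}$ and translates $Q_{v}^{k}$ of a cube $Q_{v}\ni x$ with $l(Q_{v})=2^{-v}$, not dilates of $P$), and handles the weight by the constant-exponent trick $\frac{M_{3Q_{v}}(f_{v})}{|P|^{\tau (x)}}=\big(\frac{(M_{3Q_{v}}(f_{v}))^{d/\tau (x)}}{|P|^{d}}\big)^{\tau (x)/d}$ with a constant $d$ satisfying $\tau ^{+}<d<(\tau p)^{-}$, so that $|P|^{-d}$ is an honest constant; Lemma \ref{DHHR-estimate}, H\"older with $\frac{1}{d}=\frac{1}{p(\cdot )\tau (\cdot )}+\frac{1}{t(\cdot )}$, and the boundedness of $\mathcal{M}$ on $L^{p(\cdot )\tau (\cdot )/d}$ (which needs $(p\tau /d)^{-}>1$) then finish that term. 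Nothing in your proposal substitutes for this mechanism; your closing paragraph locates the obstacle only in the annular factor $2^{nk\tau (x)}$, whereas the local piece itself already fails for large $P$.

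Two further points, less central but not cosmetic. First, summing your annular estimates over $k$ inside $\ell ^{q(\cdot )}(L^{p(\cdot )})$ presupposes a triangle inequality this space need not have ($q^{-}<1$ and $\frac{1}{p}+\frac{1}{q}>1$ are allowed); the paper gets around this by raising to a power $r<\frac{1}{2}\min (p^{-},q^{-},2)$ and, crucially, by working at the modular level with the per-$v$ quantity $\delta =\big\|\,|f_{v}/|P|^{\tau (\cdot )}|^{q(\cdot )}\chi _{3P}\big\|_{p(\cdot )/q(\cdot )}+2^{-v}$, which is what lets the unit-ball property be applied uniformly in $v$; your plan should specify this or an equivalent device. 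Second, converting the constant $M_{2^{k+1}P}|f_{v}|$ on $P$ back into the norm over the large cube requires H\"older together with \eqref{DHHR} and Lemma \ref{DHHR-estimate} again (this is the analogue of the paper's treatment of the terms $J_{v,k}^{2}$, where the comparison cube is $\tilde{Q}^{k}=Q(c_{P},2|k|l(P))$); your factor $2^{-k(m-n-n\tau ^{+})}$ has the right shape, but this step too must be carried out without freezing $\tau$ over cubes of side larger than $1$.
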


The proof of this lemma is postponed to the Appendix.

Let $\widetilde{L_{\tau (\cdot )}^{p(\cdot )}}$ be the collection of
functions $f\in L_{\text{loc}}^{p(\cdot )}(\mathbb{R}^{n})$ such that%
\begin{equation*}
\big\|f\big\|_{\widetilde{L_{\tau (\cdot )}^{p(\cdot )}}}:=\sup \Big\|\frac{%
f\chi _{P}}{|P|^{\tau (\cdot )}}\Big\|_{p(\cdot )}<\infty ,\quad p\in 
\mathcal{P}_{0},\quad \tau :\mathbb{R}^{n}\rightarrow \mathbb{R}^{+},
\end{equation*}%
where the supremum is taken over all dyadic cubes $P$ with $|P|\geq 1$.
Notice that 
\begin{equation}
\left\Vert f\right\Vert _{\widetilde{L_{\tau (\cdot )}^{p(\cdot )}}}\leq
1\Leftrightarrow \sup_{P\in \mathcal{Q},|P|\geq 1}\Big\|\Big|\frac{f}{%
|P|^{\tau (\cdot )}}\Big|^{q(\cdot )}\chi _{P}\Big\|_{p(\cdot )/q(\cdot
)}\leq 1.  \label{mod-est}
\end{equation}

Recall that\ $\theta _{v}=2^{vn}\theta \left( 2^{v}\cdot \right) ,v\in 
\mathbb{Z}$.

\begin{lemma}
\label{key-estimate1}Let $v\in \mathbb{Z}$, $\mathbb{\tau }\in C_{\mathrm{loc%
}}^{\log }$, $\tau ^{-}>0$, $p\in \mathcal{P}_{0}^{\log }$ and $\theta
,\omega \in \mathcal{S}(\mathbb{R}^{n})$ with $\mathrm{supp}\mathcal{F}%
\omega \subset \overline{B(0,1)}$. For any $f\in \mathcal{S}^{\prime }(%
\mathbb{R}^{n})$ and any dyadic cube $P$ with $|P|\geq 1$, we have%
\begin{equation*}
\Big\|\frac{\theta _{v}\ast \omega _{v}\ast f}{|P|^{\tau (\cdot )}}\chi _{P}%
\Big\|_{p(\cdot )}\leq c\left\Vert \omega _{v}\ast f\right\Vert _{\widetilde{%
L_{\tau (\cdot )}^{p(\cdot )}}},
\end{equation*}%
such that the right-hand side is finite, where $c>0$ is independent of $v$
and $l(P).$
\end{lemma}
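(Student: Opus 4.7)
My plan is to apply the Peetre-type $r$-trick (Lemma \ref{r-trick}) to reduce the estimate on $\theta_{v}\ast \omega_{v}\ast f$ to a convolution estimate, and then to handle the latter through a dyadic decomposition of $\mathbb{R}^{n}$ into cubes of side length $l(P)$. Taking $R=N=2^{v}$ in Lemma \ref{r-trick} gives $A=1$, and therefore
\[
|\theta_{v}\ast \omega_{v}\ast f(x)|^{r}\leq c\,\eta_{v,M}\ast |\omega_{v}\ast f|^{r}(x),\qquad x\in \mathbb{R}^{n},
\]
for any fixed $0<r<p^{-}$ and any $M>n$ to be chosen sufficiently large. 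Writing $g=\omega_{v}\ast f$ and using the scaling identity $\bigl\||F|^{r}\bigr\|_{p(\cdot)/r}=\|F\|_{p(\cdot)}^{r}$, the claim reduces to
\[
\Big\|\frac{\eta_{v,M}\ast |g|^{r}}{|P|^{r\tau (\cdot)}}\chi _{P}\Big\|_{s(\cdot)}\lesssim \|g\|_{\widetilde{L_{\tau (\cdot)}^{p(\cdot)}}}^{r},
\]
where $s(\cdot):=p(\cdot)/r$ satisfies $s^{-}>1$, so that the triangle inequality is available. By homogeneity I normalize the right-hand side to $1$.

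Since $l(P)\geq 1$ is a nonnegative power of $2$, the translates $P_{k}=P+k\,l(P)$, $k\in \mathbb{Z}^{n}$, form a dyadic partition of $\mathbb{R}^{n}$ with $|P_{k}|=|P|\geq 1$. I decompose $|g|^{r}=\sum _{k}|g|^{r}\chi _{P_{k}}$; for each $k$ and $x\in P$, the pointwise bound
\[
\eta_{v,M}\ast (|g|^{r}\chi _{P_{k}})(x)\leq c\,2^{vn}\bigl(1+2^{v}\,\mathrm{dist}(P,P_{k})\bigr)^{-M}\int_{P_{k}}|g(y)|^{r}\,dy
\]
holds uniformly in $x$. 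After invoking the triangle inequality in $L^{s(\cdot)}$, the estimate reduces to the scalar inequality
\[
\sum_{k\in \mathbb{Z}^{n}}2^{vn}\bigl(1+2^{v}\,\mathrm{dist}(P,P_{k})\bigr)^{-M}\Big(\int_{P_{k}}|g|^{r}\,dy\Big)\Big\|\frac{\chi _{P}}{|P|^{r\tau (\cdot)}}\Big\|_{s(\cdot)}\lesssim 1.
\]
Each integral is controlled by the normalization $\|g\chi _{P_{k}}/|P_{k}|^{\tau (\cdot)}\|_{p(\cdot)}\leq 1$, and the norm of $\chi _{P}/|P|^{r\tau (\cdot)}$ is estimated via \eqref{DHHR}; convergence of the sum in $k$ is ensured by choosing $M$ large relative to $n$ and $\tau ^{+}$.

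The main obstacle is the variability of $\tau (\cdot)$ and $p(\cdot)$: since $|P|$ and the $|P_{k}|$ may be arbitrarily large, local log-H\"{o}lder continuity alone does not give $|P|^{\tau (x)}\approx |P|^{\tau (y)}$ for $x\in P$ and $y\in P_{k}$, so the variable exponents cannot be frozen naively. This is exactly the type of difficulty that Lemma \ref{DHHR-estimate} is designed to address, by combining local log-H\"{o}lder continuity with log-H\"{o}lder decay at infinity to allow variable exponents to be exchanged inside modular estimates modulo controlled error terms; Lemma \ref{DHR-lemma} plays the analogous role for the smoothness exponent if it needs to be moved inside the convolution. Once these exponents have been essentially frozen, the geometric decay factor produces, upon summation over $k$, a quantity of order $\max (1,(2^{v}l(P))^{-n})$ that combines with the $|P|$-powers from $\int_{P_{k}}|g|^{r}$ and from $\|\chi _{P}/|P|^{r\tau (\cdot)}\|_{s(\cdot)}$ to yield a constant independent of $v$ and $l(P)$, completing the proof after taking $r$-th roots.
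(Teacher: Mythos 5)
There is a genuine gap, and it lies in your choice of decomposition. You tile $\mathbb{R}^{n}$ by the translates $P_{k}=P+k\,l(P)$ of the \emph{big} cube $P$ and then, for every $k$ including the near ones, replace $\eta _{v,M}\ast (|g|^{r}\chi _{P_{k}})(x)$ by the $x$-independent quantity $2^{vn}\bigl(1+2^{v}\operatorname{dist}(P,P_{k})\bigr)^{-M}\int_{P_{k}}|g|^{r}\,dy$. For the diagonal term $k=0$ this throws away the localization of $\eta _{v,M}$ at scale $2^{-v}$, which is exactly what the lemma lives on, and the resulting ``scalar inequality'' is false already for constant exponents: take $p,\tau $ constant with $\tau <1/p$ and $g\equiv |P|^{\tau -1/p}$ on $P$ (so that $\|g\|_{\widetilde{L_{\tau }^{p}}}\lesssim 1$); then $2^{vn}\int_{P}|g|^{r}\,dy\cdot \bigl\||P|^{-r\tau }\chi _{P}\bigr\|_{p/r}\approx 2^{vn}|P|$, which is unbounded in $v$ and $l(P)$. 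So the reduction you propose cannot be closed, no matter how large $M$ is chosen; the near terms must keep the averaging at scale $2^{-v}$ as a function of $x$ (a maximal-function bound), not be pulled out as constants.

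This is in fact how the paper proceeds: after the same application of Lemma \ref{r-trick}, it splits $\eta _{v,m}\ast |\omega _{v}\ast f|^{r}(x)$ over the cube $3Q_{v}$ and its translates $Q_{v}^{k}$ of side length $2^{-v}$ containing/surrounding the point $x\in P$ (as in Lemma \ref{Alm-Hastolemma1}), so the main term is $M_{3Q_{v}}(|\omega _{v}\ast f|^{r})(x)$. The genuinely hard part, which your sketch only gestures at in the last paragraph, is then to handle $|P|^{\tau (\cdot )}$ and the variable $p$ over the large cube $P$: the paper first proves the auxiliary pointwise bound \eqref{key-estimate}, i.e. $2^{-vn/r}|\omega _{v}\ast f(x)|\lesssim \|\omega _{v}\ast f\|_{\widetilde{L_{\tau (\cdot )}^{p(\cdot )}}}$, precisely so that the normalized function satisfies the hypothesis of Lemma \ref{DHHR-estimate}; it then applies Lemma \ref{DHHR-estimate} to move the exponent $d/\tau (x)$ (with $\tau ^{+}<d<(p\tau )^{-}/r$) inside the average, controls the error terms by the function $h$, and finishes with the boundedness of $\mathcal{M}$ on $L^{p(\cdot )\tau (\cdot )/(rd)}$ together with \eqref{mod-est} and the covering of $3P$ by $3^{n}$ dyadic cubes of the same size. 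None of these steps appears in your argument in executable form, and the step you do write down fails at the diagonal, so the proposal as it stands does not prove the lemma.
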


We will present the proof in Appendix.

\begin{lemma}
\label{Key-lemma}Let $\alpha ,\mathbb{\tau }\in C_{\mathrm{loc}}^{\log }$, $%
\mathbb{\tau }^{-}\geq 0$ and $p,q\in \mathcal{P}_{0}^{\log }$ with $%
0<q^{-}\leq q^{+}<\infty $. Let $(f_{k})_{k\in \mathbb{N}_{0}}$ be a
sequence of measurable functions on $\mathbb{R}^{n}$. For all $v\in \mathbb{N%
}_{0}$ and $x\in \mathbb{R}^{n}$, let 
\begin{equation*}
g_{v}(x)=\sum_{k=0}^{\infty }2^{-|k-v|\delta }f_{k}(x).
\end{equation*}%
Then there exists a positive constant $c$, independent of $(f_{k})_{k\in 
\mathbb{N}_{0}}$ such that%
\begin{equation*}
\big\|(g_{v})_{v}\big\|_{\ell ^{q(\cdot )}(L_{p(\cdot )}^{\tau (\cdot
)})}\leq c\big\|(f_{v})_{v}\big\|_{\ell ^{q(\cdot )}(L_{p(\cdot )}^{\tau
(\cdot )})},\quad \delta >0.
\end{equation*}
\end{lemma}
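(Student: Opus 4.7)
The lemma asserts that the discrete convolution against the geometric kernel $\{2^{-|j|\delta}\}_{j\in\mathbb{Z}}$ is bounded on the sequence space $\ell^{q(\cdot)}(L_{p(\cdot)}^{\tau(\cdot)})$. My plan is to fix an arbitrary dyadic cube $P\in\mathcal{Q}$, bound the inner norm $\|(g_v\chi_P/|P|^{\tau(\cdot)})_{v\geq v_P^+}\|_{\ell^{q(\cdot)}(L^{p(\cdot)})}$ by a constant multiple of $\|(f_v)_v\|_{\ell^{q(\cdot)}(L_{p(\cdot)}^{\tau(\cdot)})}$ uniformly in $P$, and finally take the supremum in $P$. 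I would first split
\begin{equation*}
g_v = g_v^{(1)} + g_v^{(2)},\quad g_v^{(1)}:=\sum_{k\geq v_P^+} 2^{-|v-k|\delta} f_k,\quad g_v^{(2)}:=\sum_{k<v_P^+} 2^{-(v-k)\delta} f_k,
\end{equation*}
and reduce via the quasi-triangle inequality with exponent $t=\min(1,p^-,q^-)\in(0,1]$ to estimating each piece separately.

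For the diagonal term $g_v^{(1)}$, both the summation index $k$ and the outer index $v$ lie in $\{v_P^+, v_P^+ + 1, \ldots\}$, so after multiplying by $\chi_P/|P|^{\tau(\cdot)}$ the map $(f_k\chi_P/|P|^{\tau(\cdot)})_k\mapsto(g_v^{(1)}\chi_P/|P|^{\tau(\cdot)})_v$ is a discrete convolution acting only on the $v$-index with the summable kernel $\{2^{-|j|\delta}\}_j$. Its boundedness on $\ell^{q(\cdot)}(L^{p(\cdot)})$ follows from Minkowski's inequality when $q^-\geq 1$ and from the $t$-triangle inequality with $t=q^-$ when $q^-<1$; in either case we obtain a bound by $c(\delta)\|(f_v)_v\|_{\ell^{q(\cdot)}(L_{p(\cdot)}^{\tau(\cdot)})}$.

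The off-diagonal term $g_v^{(2)}$, which is nontrivial only when $|P|<1$, is where the Morrey-type weight $|P|^{-\tau(\cdot)}$ makes the argument delicate. For each $0\leq k<v_P^+$ I would use the dyadic ancestor $P^{(k)}\supset P$ with side length $2^{-k}$, for which $v_{P^{(k)}}^+=k$ and hence the local bound
\begin{equation*}
\|f_k\chi_{P^{(k)}}/|P^{(k)}|^{\tau(\cdot)}\|_{p(\cdot)}\leq\|(f_v)_v\|_{\ell^{q(\cdot)}(L_{p(\cdot)}^{\tau(\cdot)})}
\end{equation*}
is an immediate consequence of the definition of the outer norm. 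Using $\chi_P\leq\chi_{P^{(k)}}$ and the identity $|P|^{\tau(x)}=|P^{(k)}|^{\tau(x)}\cdot 2^{-(v_P-k)n\tau(x)}$, combined with the log-Hölder continuity of $\tau$ on $P^{(k)}$, one rewrites $\|f_k\chi_P/|P|^{\tau(\cdot)}\|_{p(\cdot)}$ as a weighted version of the local estimate above. Applying the $L^{p(\cdot)}$-triangle inequality to $g_v^{(2)}$ and then the outer $\ell^{q(\cdot)}$-norm in $v\geq v_P^+$, the geometric decay $2^{-(v-k)\delta}$ together with the ancestor-weight factors collapses the double sum to a constant times $\|(f_v)_v\|_{\ell^{q(\cdot)}(L_{p(\cdot)}^{\tau(\cdot)})}$. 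The main obstacle is precisely this bookkeeping in the ancestor-weight factors: one has to check that the factors $2^{(v_P-k)n\tau(\cdot)}$ produced by the weight mismatch are reabsorbed by the geometric decay in both $k$ and $v$, which is where the hypothesis $\tau^-\geq 0$ together with the log-Hölder regularity of $\tau$ plays the decisive role. Taking $\sup_{P\in\mathcal{Q}}$ then gives the claim.
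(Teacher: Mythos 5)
Your handling of the diagonal part $g_v^{(1)}$ is sound in outline, though for variable $q(\cdot)$ you cannot literally invoke Minkowski or a $q^-$-triangle inequality: $\ell^{q(\cdot)}(L^{p(\cdot)})$ is not an iterated norm, and the convolution inequality $\big\|\big(\sum_k 2^{-|k-v|\delta}h_k\big)_v\big\|_{\ell^{q(\cdot)}(L^{p(\cdot)})}\lesssim\big\|(h_k)_k\big\|_{\ell^{q(\cdot)}(L^{p(\cdot)})}$ is normally proved through the modular $\sum_v\big\||\cdot|^{q(\cdot)}\big\|_{p(\cdot)/q(\cdot)}$ together with a pointwise H\"older step. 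That is repairable; note also that the paper gives no proof of this lemma at all, it only defers to \cite[Lemma 2.10]{D6}, so there is no detailed argument to compare with.

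The genuine gap is in the off-diagonal term, exactly at the step you describe as bookkeeping. For $k<v_P\le v$ your ancestor comparison gives $\big\|f_k\chi_P/|P|^{\tau(\cdot)}\big\|_{p(\cdot)}\le 2^{(v_P-k)n\tau^{+}}\big\|f_k\chi_{P^{(k)}}/|P^{(k)}|^{\tau(\cdot)}\big\|_{p(\cdot)}$, and after summing the decay in $v\ge v_P$ the only decay left in $k$ is $2^{-(v_P-k)\delta}$; the mismatch factor is genuinely exponential with rate $n\tau(c_P)$ (log-H\"older continuity of $\tau$ cannot lower this exponent), so absorption needs $\delta>n\tau^{+}$, which the hypothesis $\delta>0$ does not supply. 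This is not merely a weakness of your method: the stated inequality fails for small $\delta$. Take constant exponents, $\tau>0$ constant with $\delta<n\tau$, $f_0=\chi_{[0,2^{-N})^n}$ and $f_k=0$ for $k\ge1$. Only cubes with $l(P)\ge1$ contribute to $\big\|(f_v)_v\big\|_{\ell^{q}(L_{p}^{\tau})}$, which equals $2^{-Nn/p}$, while testing $(g_v)_v$ on $P=[0,2^{-N})^n$ yields $\big\|(g_v)_v\big\|\gtrsim 2^{Nn\tau}\,2^{-N\delta}\,2^{-Nn/p}$, so the ratio is of order $2^{N(n\tau-\delta)}\to\infty$. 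Hence the conclusion can only hold with $\delta$ large relative to $n\tau^{+}$ (say $\delta>n\tau^{+}$), which is consistent with how the lemma is actually used later: the conditions $K\ge([\alpha^{+}+n\tau^{+}]+1)^{+}$ and the choice of $L$ in the atomic decomposition produce precisely such large decay exponents. Under that strengthened hypothesis your scheme (diagonal convolution plus ancestor cubes for $k<v_P^{+}$) does close; with only $\delta>0$, as in the statement, the decisive absorption step fails and no bookkeeping can rescue it.
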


The proof of Lemma \ref{Key-lemma} can be obtained by the same arguments
used in \cite[Lemma 2.10]{D6}.

\section{The spaces\textbf{\ }$\mathfrak{B}_{p(\cdot ),q(\cdot )}^{\protect%
\alpha (\cdot ),\protect\tau (\cdot )}$}

In this section we\ present the Fourier analytical definition of Besov-type
spaces of variable smoothness and integrability\ and we prove their basic
properties in analogy to the Besov-type spaces with fixed exponents. Select
a pair of Schwartz functions $\Phi $ and $\varphi $ such that%
\begin{equation}
\text{supp}\mathcal{F}\Phi \subset \overline{B(0,2)}\text{\quad and\quad }|%
\mathcal{F}\Phi (\xi )|\geq c\text{\quad if\quad }|\xi |\leq \frac{5}{3}
\label{Ass1}
\end{equation}%
and 
\begin{equation}
\text{supp}\mathcal{F}\varphi \subset \overline{B(0,2)}\backslash B(0,1/2)%
\text{\quad and\quad }|\mathcal{F}\varphi (\xi )|\geq c\text{\quad if\quad }%
\frac{3}{5}\leq |\xi |\leq \frac{5}{3},  \label{Ass2}
\end{equation}%
where $c>0$. We put $\varphi _{v}=2^{vn}\varphi (2^{v}\cdot ),v\in \mathbb{N}%
.$

\begin{definition}
\label{B-F-def}Let $\alpha :\mathbb{R}^{n}\rightarrow \mathbb{R}$, $\tau :%
\mathbb{R}^{n}\rightarrow \mathbb{R}^{+}$ and $p,q\in \mathcal{P}_{0}$. Let $%
\Phi $ and $\varphi $ satisfy \eqref{Ass1} and \eqref{Ass2}, respectively.
The Besov-type space $\mathfrak{B}_{p(\cdot ),q(\cdot )}^{\alpha (\cdot
),\tau (\cdot )}$\ is the collection of all $f\in \mathcal{S}^{\prime }(%
\mathbb{R}^{n})$\ such that 
\begin{equation}
\left\Vert f\right\Vert _{\mathfrak{B}_{p(\cdot ),q(\cdot )}^{\alpha (\cdot
),\tau (\cdot )}}:=\sup_{P\in \mathcal{Q}}\Big\|\Big(\frac{2^{v\alpha \left(
\cdot \right) }\varphi _{v}\ast f}{|P|^{\tau (\cdot )}}\chi _{P}\Big)_{v\geq
v_{P}^{+}}\Big\|_{\ell ^{q(\cdot )}(L^{p(\cdot )})}<\infty ,  \label{B-def}
\end{equation}%
where $\varphi _{0}$ is replaced by $\Phi $.
\end{definition}

Using the system $(\varphi _{v})_{v\in \mathbb{N}_{0}}$ we can define the
quasi-norm%
\begin{equation*}
\left\Vert f\right\Vert _{B_{p,q}^{\alpha ,\tau }}:=\sup_{P\in \mathcal{Q}}%
\frac{1}{\left\vert P\right\vert ^{\tau }}\Big(\sum\limits_{v=v_{P}^{+}}^{%
\infty }2^{v\alpha q}\left\Vert \left( \varphi _{v}\ast f\right) \chi
_{P}\right\Vert _{p}^{q}\Big)^{\frac{1}{q}}
\end{equation*}%
for constants $\alpha $ and $p,q\in (0,\infty ]$, with the usual
modification if $q=\infty $. The Besov-type space $B_{p,q}^{\alpha ,\tau }$
consist of all distributions $f\in \mathcal{S}^{\prime }(\mathbb{R}^{n})$
for which $\left\Vert f\right\Vert _{B_{p,q}^{\alpha ,\tau }}<\infty $. It
is well-known that these spaces do not depend on the choice of the system $%
(\varphi _{v})_{v\in \mathbb{N}_{0}}$ (up to equivalence of quasinorms).
Further details on the classical theory of these spaces can be found in \cite%
{D1}, \cite{YY13} and \cite{WYY}, see also \cite{D4} for recent
developments. Moreover, $B_{p,q}^{\alpha ,0}$ are just the classical Besov
spaces, see \cite{T1} for the theory of these function spaces.

One recognizes immediately that if $\alpha $, $\tau $, $p$ and $q$ are
constants, then 
\begin{equation*}
\mathfrak{B}_{p(\cdot ),q(\cdot )}^{\alpha (\cdot ),\tau (\cdot
)}=B_{p,q}^{\alpha ,\tau }.
\end{equation*}%
When, $q:=\infty $\ the Besov-type space $\mathfrak{B}_{p(\cdot ),\infty
}^{\alpha (\cdot ),\tau (\cdot )}$\ consist of all distributions $f\in 
\mathcal{S}^{\prime }(\mathbb{R}^{n})$\ such that 
\begin{equation*}
\sup_{P\in \mathcal{Q},v\geq v_{P}^{+}}\Big\|\frac{2^{v\alpha \left( \cdot
\right) }\varphi _{v}\ast f}{|P|^{\tau (\cdot )}}\chi _{P}\Big\|_{p(\cdot
)}<\infty .
\end{equation*}%
Let $B_{J}$ be any ball of $\mathbb{R}^{n}$ with radius $2^{-J}$, $J\in 
\mathbb{Z}$. In the definition of the spaces $\mathfrak{B}_{p(\cdot
),q(\cdot )}^{\alpha (\cdot ),\tau (\cdot )}$ if we replace the dyadic cubes 
$P$ by the balls $B_{J}$, then we obtain equivalent quasi-norms. From these
if we replace dyadic cubes $P$ in Definition \ref{B-F-def} by arbitrary
cubes $P$, we then obtain equivalent quasi-norms.

The Besov space of variable smoothness and integrability $B_{p(\cdot
),q(\cdot )}^{\alpha (\cdot )}$ is the collection of all $f\in \mathcal{S}%
^{\prime }(\mathbb{R}^{n})$\ such that 
\begin{equation*}
\left\Vert f\right\Vert _{B_{p(\cdot ),q(\cdot )}^{\alpha (\cdot
)}}:=\left\Vert \left( 2^{v\alpha \left( \cdot \right) }\varphi _{v}\ast
f\right) _{v\in \mathbb{N}_{0}}\right\Vert _{\ell ^{q(\cdot )}(L^{p(\cdot
)})}<\infty ,
\end{equation*}%
which introduced and investigated in \cite{AH}, see \cite{KV122}\ for
further results. Taking $\alpha \in \mathbb{R}$ and $q\in (0,\infty )$ as
constants we derive the spaces $B_{p(\cdot ),q}^{\alpha }$ studied by Xu in 
\cite{Xu08}. Obviously, 
\begin{equation*}
\mathfrak{B}_{p(\cdot ),q}^{\alpha ,0}=B_{p(\cdot ),q}^{\alpha }.
\end{equation*}%
We refer the reader to the recent paper \cite{YHSY} for further details,
historical remarks and more references on embeddings of Besov-type spaces
with fixed exponents. We mention that the variable Triebel-Lizorkin version
of our spaces introduced on this paper is given in \cite{D7}. Variable
Besov-Morrey spaces are given in \cite{AC19}, see \cite{C21} and \cite%
{Dachun18} for the variable 2-microlocal Besov-Triebel-Lizorkin-type spaces.

Sometimes it is of great service if one can restrict sup$_{P\in \mathcal{Q}}$
in the definition of $\mathfrak{B}_{p(\cdot ),q(\cdot )}^{\alpha (\cdot
),\tau (\cdot )}$ to a supremum taken with respect to dyadic cubes with side
length $\leq 1$. The next lemma can be obtained by an argument similar to
that used in the proof of \cite[Lemma 3.6]{D5}.

\begin{lemma}
\label{new-equinorm}Let $\alpha ,\mathbb{\tau }\in C_{\mathrm{loc}}^{\log }$%
, $\mathbb{\tau }^{-}\geq 0$ and $p,q\in \mathcal{P}_{0}^{\log }$ with $%
\left( \mathbb{\tau }p-1\right) ^{-}\geq 0$ and $0<q^{+}<\infty $. A
tempered distribution $f$ belongs to $\mathfrak{B}_{p(\cdot ),q(\cdot
)}^{\alpha (\cdot ),\tau (\cdot )}$ if and only if,%
\begin{equation*}
\left\Vert f\right\Vert _{\mathfrak{B}_{p(\cdot ),q(\cdot )}^{\alpha (\cdot
),\tau (\cdot )}}^{\#}:=\sup_{P\in \mathcal{Q},|P|\leq 1}\Big\|\Big(\frac{%
2^{v\alpha \left( \cdot \right) }\varphi _{v}\ast f}{|P|^{\tau (\cdot )}}%
\chi _{P}\Big)_{v\geq v_{P}}\Big\|_{\ell ^{q(\cdot )}(L^{p(\cdot )})}<\infty
.
\end{equation*}%
Furthermore, the quasi-norms $\left\Vert f\right\Vert _{\mathfrak{B}%
_{p(\cdot ),q(\cdot )}^{\alpha (\cdot ),\tau (\cdot )}}$ and $\left\Vert
f\right\Vert _{\mathfrak{B}_{p(\cdot ),q(\cdot )}^{\alpha (\cdot ),\tau
(\cdot )}}^{\#}$ are equivalent.
\end{lemma}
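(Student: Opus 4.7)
The inequality $\|f\|_{\mathfrak{B}_{p(\cdot),q(\cdot)}^{\alpha(\cdot),\tau(\cdot)}}^{\#} \leq \|f\|_{\mathfrak{B}_{p(\cdot),q(\cdot)}^{\alpha(\cdot),\tau(\cdot)}}$ is immediate: the supremum defining $\|f\|^{\#}$ is taken over a subfamily (dyadic cubes with $|P| \leq 1$, where moreover $v_P^{+} = v_P$) of the cubes used in $\|f\|$, so the $\#$-norm is automatically no larger. The content of the lemma is thus the reverse estimate, and it suffices to bound the contribution of cubes $P$ with $|P| > 1$ by a constant multiple of $\|f\|^{\#}$, since cubes with $|P| \leq 1$ contribute the $\#$-norm verbatim.

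Fix such a $P$ with $|P| = 2^{Jn}$, $J \geq 1$ (so $v_P^{+} = 0$), and partition it into its unit dyadic subcubes $\{R_j\}_{j=1}^{|P|}$. After normalizing so that $\|f\|^{\#} = 1$, applying the $\#$-norm on each $R_j$ (where $v_{R_j} = 0$ and $|R_j|^{\tau(\cdot)} = 1$) gives
\begin{equation*}
\Big\|\big((2^{v\alpha(\cdot)}\varphi_v \ast f)\chi_{R_j}\big)_{v \geq 0}\Big\|_{\ell^{q(\cdot)}(L^{p(\cdot)})} \leq 1,\qquad j=1,\ldots,|P|,
\end{equation*}
and I need to deduce
\begin{equation*}
A_P := \Big\|\Big(\frac{2^{v\alpha(\cdot)}\varphi_v \ast f}{|P|^{\tau(\cdot)}}\chi_P\Big)_{v \geq 0}\Big\|_{\ell^{q(\cdot)}(L^{p(\cdot)})} \leq c,
\end{equation*}
with $c$ independent of $P$; taking the supremum over $P$ then yields $\|f\|_{\mathfrak{B}} \lesssim \|f\|_{\mathfrak{B}}^{\#}$.

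The central mechanism is the pointwise absorbing identity
\begin{equation*}
|P|^{\tau(x)p(x)} \geq |P| \qquad (x \in P),
\end{equation*}
immediate from $(\tau p - 1)^{-} \geq 0$ and $|P| \geq 1$. Inside the $L^{p(\cdot)}$-modular this produces a factor $1/|P|$ which is exactly what is needed to cancel the $|P|$-many unit subcubes appearing in the decomposition $\chi_P = \sum_j \chi_{R_j}$. Concretely, using $q^{+} < \infty$ I replace the mixed semimodular by $\varrho_{\ell^{q(\cdot)}(L^{p(\cdot)})}((f_v)_v) = \sum_v \big\||f_v|^{q(\cdot)}\big\|_{p(\cdot)/q(\cdot)}$, combine the pointwise gain with the disjoint-support modular additivity $\varrho_{p(\cdot)/q(\cdot)}\big(\sum_j h_j\big) = \sum_j \varrho_{p(\cdot)/q(\cdot)}(h_j)$, and reassemble the unit-cube bounds, the log-H\"older regularity of $\alpha$, $\tau$, $p$, $q$ ensuring that all multiplicative constants are structural. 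This is precisely the blueprint of the proof of \cite[Lemma 3.6]{D5}.

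\textbf{Main obstacle.} The delicate part is the variable-exponent bookkeeping when translating between the semimodular and the Luxemburg quasi-norm across the three layers of the expression (inner $L^{p(\cdot)}$, middle power $q(\cdot)$, and outer sum over $v$). One cannot raise everything to a single fixed power, so the pointwise gain $|P|^{-1}$ has to be propagated through the modular at every stage before it is converted back into a norm, exploiting the scaling $\varrho_{r(\cdot)}(h/\kappa) \leq \kappa^{-r^{-}}\varrho_{r(\cdot)}(h)$ for $\kappa \geq 1$. Arranging this bookkeeping so that the gain exactly absorbs the number of unit subcubes, without leaking a residual power of $|P|$ via the $p^{\pm}$, $q^{\pm}$ mismatches, is the only nontrivial step; once it is organized as in \cite[Lemma 3.6]{D5}, the rest is routine.
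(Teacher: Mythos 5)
Your easy direction and the individual ingredients you list are fine ($v_P^{+}=v_P$ when $|P|\leq 1$, the pointwise bound $|P|^{\tau(x)p(x)}\geq |P|$, disjoint-support additivity of the modular), but the step you set aside as ``bookkeeping'' is precisely where the argument breaks, and it cannot be repaired inside your scheme. The gain coming from $|P|^{\tau(\cdot)p(\cdot)}\geq|P|$ acts separately at each level $v$: writing $\delta_{j,v}:=\big\||2^{v\alpha(\cdot)}\varphi_v\ast f|^{q(\cdot)}\chi_{R_j}\big\|_{p(\cdot)/q(\cdot)}$, what the factor $1/|P|$ in the $L^{p(\cdot)/q(\cdot)}$-modular buys, once distributed over the $|P|$ unit subcubes, is the level-$v$ bound $\big\||2^{v\alpha(\cdot)}\varphi_v\ast f|^{q(\cdot)}|P|^{-\tau(\cdot)q(\cdot)}\chi_P\big\|_{p(\cdot)/q(\cdot)}\leq\max_{j}\delta_{j,v}$, and essentially nothing better. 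Your hypothesis $\|f\|^{\#}\leq 1$ only yields $\sum_{v}\delta_{j,v}\leq 1$ for each fixed $j$, whereas closing the estimate requires $\sum_{v}\max_{j}\delta_{j,v}\lesssim 1$; in general one only has $\sum_{v}\max_{j}\delta_{j,v}\leq\sum_{j}\sum_{v}\delta_{j,v}\leq|P|$, so the factor you believed was absorbed reappears through the $v$-sum. This loss is genuine, not an artifact of sloppy bookkeeping: already for constant exponents $n=1$, $\alpha=0$, $p=2$, $q=1$, $\tau=1/2$ (so $(\tau p-1)^{-}=0$ and $q^{+}<\infty$, i.e. all stated hypotheses hold), take $f_N=\sum_{v=3}^{N}e^{i\frac{3}{2}2^{v}x}\eta(x-x_v)$ with $\mathcal{F}\eta$ supported in a small ball and the $x_v$ in pairwise distinct unit cubes; then $\varphi_v\ast f_N$ is essentially a unit bump near $x_v$, one checks $\|f_N\|^{\#}\lesssim 1$ uniformly in $N$, while over a dyadic cube $P_N$ of volume about $N$ containing all the $x_v$ one gets $|P_N|^{-\tau}\sum_{v\leq N}\|(\varphi_v\ast f_N)\chi_{P_N}\|_{2}\gtrsim N^{1/q-\tau}=N^{1/2}$. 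Hence no counting argument based solely on $|P|^{\tau(\cdot)p(\cdot)}\geq|P|$ can work at the borderline $(\tau p-1)^{-}=0$ with finite $q$; your plan could only close under additional assumptions, for instance $q(\cdot)\geq p(\cdot)$ (so Minkowski's inequality handles the $j$-decomposition inside the $v$-sum) or $(\tau q-1)^{-}\geq 0$ (so the crude interchange $\sum_{v}\max_{j}\leq\sum_{j}\sum_{v}$ is affordable).

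For comparison with the paper: no proof is written out there, only a reference to the argument of \cite[Lemma 3.6]{D5} (and, for constant exponents, \cite[Lemma 2.2]{WYY}), so I cannot match your steps against explicit ones; but a route that does work is structurally different from yours. From the small-cube part of $\|f\|^{\#}$ one first extracts a pointwise estimate of the type $2^{v(\alpha(x)+n\tau(x)-n/p(x))}|\varphi_v\ast f(x)|\lesssim\|f\|^{\#}$ (compare Remark \ref{new-est} and the theorem following the lemma), and then, for $|P|>1$, one bounds $\big\||P|^{-\tau(\cdot)}2^{v\alpha(\cdot)}(\varphi_v\ast f)\chi_P\big\|_{p(\cdot)}$ by geometric decay in $v$ and sums the resulting series; that summation is exactly where strict positivity of $(\tau p-1)^{-}$ (or $q=\infty$) is needed, in line with the hypotheses of the theorem that follows the lemma in the paper. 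So, beyond the gap identified above, you should be aware that the subcube-counting route you propose cannot be ``organized'' into a proof under the hypotheses as printed; if you want to argue in the spirit of the paper, derive the pointwise estimate from $\|\cdot\|^{\#}$ and treat large cubes via decay in $v$, not via counting unit subcubes.
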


\begin{remark}
We like to point out that this result with fixed exponents is given in \cite[%
Lemma 2.2]{WYY}.
\end{remark}

The following conclusion implies under some suitable conditions the variable
Besov-type spaces $\mathfrak{B}_{p(\cdot ),q(\cdot )}^{\alpha (\cdot ),\tau
(\cdot )}$ are just the Besov spaces $B_{\infty ,\infty }^{\alpha (\cdot
)+n(\tau (\cdot )-1/p(\cdot ))}$, whose proof is similar to that of \cite[%
Theorem 3.8]{D5}, the details being omitted.

\begin{theorem}
Let $\alpha ,\mathbb{\tau }\in C_{\mathrm{loc}}^{\log }$, $\mathbb{\tau }%
^{-}\geq 0$ and $p,q\in \mathcal{P}_{0}^{\log }$ with $p^{+},q^{+}<\infty $.
If $\left( \mathbb{\tau }p-1\right) ^{-}>0$ or $\left( \mathbb{\tau }%
p-1\right) ^{-}\geq 0$ and $q:=\infty $, then%
\begin{equation*}
\mathfrak{B}_{p(\cdot ),q(\cdot )}^{\alpha (\cdot ),\tau (\cdot )}=B_{\infty
,\infty }^{\alpha (\cdot )+n(\tau (\cdot )-\frac{1}{p(\cdot )})}
\end{equation*}%
with equivalent quasi-norms.
\end{theorem}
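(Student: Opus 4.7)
Setting $\beta(\cdot):=\alpha(\cdot)+n(\tau(\cdot)-1/p(\cdot))$, I would prove the theorem by establishing both embeddings $\mathfrak{B}_{p(\cdot),q(\cdot)}^{\alpha(\cdot),\tau(\cdot)}\hookrightarrow B_{\infty,\infty}^{\beta(\cdot)}$ and $B_{\infty,\infty}^{\beta(\cdot)}\hookrightarrow\mathfrak{B}_{p(\cdot),q(\cdot)}^{\alpha(\cdot),\tau(\cdot)}$, invoking Lemma~\ref{new-equinorm} throughout to restrict the supremum in the $\mathfrak{B}$-quasi-norm to dyadic cubes $P$ with $|P|\le 1$ (so that $v_P=v_P^+\ge 0$).

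For $\mathfrak{B}\hookrightarrow B_{\infty,\infty}^{\beta(\cdot)}$, which will turn out not to require the hypothesis on $(\tau p-1)^-$, fix $v\ge 0$ and $x\in\mathbb{R}^n$ and let $P_{v,x}$ be the dyadic cube of side $2^{-v}$ containing $x$. Applying Lemma~\ref{r-trick} (with an auxiliary $\omega$ chosen so that $\omega_{2^{v+1}}\ast\varphi_v=\varphi_v$) together with Lemma~\ref{DHR-lemma} applied to $\beta r$ gives, for small $r>0$ and large $m$,
\[
\bigl(2^{v\beta(x)}|\varphi_v\ast f(x)|\bigr)^{r}\le c\,\eta_{v,m}\ast\bigl(2^{v\beta(\cdot)}|\varphi_v\ast f|\bigr)^{r}(x).
\]
I would then split the right-hand side over the integer translates $P_{v,j}$ ($j\in\mathbb{Z}^n$) of $P_{v,x}$ — on which $\eta_{v,m}(x-y)\lesssim 2^{vn}(1+|j-j_0|)^{-m}$, where $j_0$ is the index with $x\in P_{v,j_0}$ — and on each $P_{v,j}$ apply H\"{o}lder's inequality with exponent $p(\cdot)/r$ (choosing $r\le p^-$). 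The resulting $\|2^{v\beta(\cdot)}\varphi_v\ast f\chi_{P_{v,j}}\|_{L^{p(\cdot)}}$ is controlled by $\|f\|_{\mathfrak{B}}$ via the definition tested on $P_{v,j}$ at the single scale $v'=v$; log-H\"{o}lder continuity of $\alpha,\tau,p$ lets me pull the weight $2^{v(\alpha(c_{P_{v,j}})+n\tau(c_{P_{v,j}}))}$ out as a scalar, and \eqref{DHHR} balances $\|\chi_{P_{v,j}}\|_{L^{(p/r)'(\cdot)}}$ against $2^{-vnr/p(c_{P_{v,j}})}$. The algebra simplifies to the uniform bound $\int_{P_{v,j}}(2^{v\beta(y)}|\varphi_v\ast f(y)|)^{r}\,dy\lesssim 2^{-vn}\|f\|_{\mathfrak{B}}^{r}$, which when summed against $(1+|j-j_0|)^{-m}$ (finite for $m>n$) yields the pointwise estimate $2^{v\beta(x)}|\varphi_v\ast f(x)|\lesssim\|f\|_{\mathfrak{B}}$.

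For the reverse embedding $B_{\infty,\infty}^{\beta(\cdot)}\hookrightarrow\mathfrak{B}$, which is where the hypothesis on $(\tau p-1)^-$ plays its role, the pointwise bound $|\varphi_v\ast f(x)|\le\|f\|_{B_{\infty,\infty}^{\beta(\cdot)}}\,2^{-v\beta(x)}$ combined with $|P|=2^{-v_Pn}$ yields after simple algebra
\[
\frac{2^{v\alpha(x)}|\varphi_v\ast f(x)|}{|P|^{\tau(x)}}\chi_P(x)\le\|f\|_{B_{\infty,\infty}^{\beta(\cdot)}}\,2^{-(v-v_P)n(\tau(x)-1/p(x))}\cdot 2^{v_Pn/p(x)}\chi_P(x).
\]
A direct modular computation shows $\|2^{v_Pn/p(\cdot)}\chi_P\|_{L^{p(\cdot)}}=1$ (the modular at $\lambda=1$ equals $2^{v_Pn}|P|=1$), and the factor $2^{-(v-v_P)n(\tau(\cdot)-1/p(\cdot))}$ decays geometrically in $v-v_P$ when $(\tau p-1)^->0$ (with rate at least $(\tau p-1)^-/p^+$) and is uniformly $\le 1$ when $q=\infty$; in either case the $\ell^{q(\cdot)}$-summation over $v\ge v_P$ is bounded uniformly in $P$.

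The main obstacle is the first direction, specifically the bookkeeping needed so that the dual H\"{o}lder exponent $(p(\cdot)/r)'$, the local $L^{p(\cdot)}$-norm of $\chi_{P_{v,j}}$ furnished by \eqref{DHHR}, and the dyadic tail of $\eta_{v,m}$ combine into the precise factor $2^{-vn}$ matching $|P_{v,j}|$; log-H\"{o}lder continuity is what allows me to treat the three variable exponents $\alpha,\tau,p$ as locally constant on each $P_{v,j}$. In the borderline case $(\tau p-1)^-=0$ of the second direction, the geometric gain vanishes and the hypothesis $q=\infty$ becomes essential to prevent the $\ell^{q(\cdot)}$-sum from diverging.
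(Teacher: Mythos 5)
Your proposal is correct, and a comparison with "the paper's proof" is moot here: the paper omits the argument entirely, deferring to \cite[Theorem 3.8]{D5}, and your two-embedding scheme is precisely the standard route that reference follows. The embedding $\mathfrak{B}_{p(\cdot),q(\cdot)}^{\alpha(\cdot),\tau(\cdot)}\hookrightarrow B_{\infty,\infty}^{\beta(\cdot)}$ via Lemma \ref{r-trick}, Lemma \ref{DHR-lemma}, H\"{o}lder's inequality and \eqref{DHHR} on cubes of side $2^{-v}$ is in effect the content of Remark \ref{new-est} (and, as you observe, needs no assumption on $(\tau p-1)^{-}$); the converse rests on the pointwise bound, the exact modular identity $\varrho_{p(\cdot)}\big(2^{v_{P}n/p(\cdot)}\chi_{P}\big)=2^{v_{P}n}|P|=1$, and geometric decay in $v-v_{P}$ at the uniform rate $n(\tau p-1)^{-}/p^{+}$, with $q=\infty$ rescuing the borderline case. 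Your bookkeeping in the first direction (extracting the single scale $v'=v$ from the $\ell^{q(\cdot)}(L^{p(\cdot)})$-norm, treating $\alpha,\tau,1/p$ as constant on a cube of side $2^{-v}$, and balancing $\|\chi_{P_{v,j}}\|_{(p(\cdot)/r)'}$ against $2^{-vn}2^{vnr/p(c_{P_{v,j}})}$ via \eqref{DHHR}) is sound; in the second direction the $\ell^{q(\cdot)}$-summability for variable $q$ with $q^{+}<\infty$ should be recorded via the elementary estimate $\big\||\lambda g|^{q(\cdot)}\big\|_{p(\cdot)/q(\cdot)}\le\max(\lambda^{q^{-}},\lambda^{q^{+}})\big\||g|^{q(\cdot)}\big\|_{p(\cdot)/q(\cdot)}$ for scalars $\lambda$, which makes the geometric series converge since $q^{-}>0$.

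One caveat: your blanket appeal to Lemma \ref{new-equinorm} is both unjustified in the case $q:=\infty$ (that lemma is stated only for $0<q^{+}<\infty$) and unnecessary. In the first direction you only test the $\mathfrak{B}$-quasi-norm on individual dyadic cubes of side $2^{-v}$, which the full supremum in Definition \ref{B-F-def} already dominates; in the second direction your computation applies verbatim to every dyadic cube, since $|P|=2^{-v_{P}n}$ and $v\ge v_{P}^{+}\ge v_{P}$ hold for all $P\in\mathcal{Q}$, so the factor $2^{v_{P}n/p(\cdot)}\chi_{P}$ still has modular exactly $1$ and the exponent $v-v_{P}$ is still nonnegative when $|P|>1$. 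Dropping the reduction to $|P|\le 1$ removes the only step of your argument that is not covered by the lemmas as stated.
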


\begin{remark}
\label{new-est}$\mathrm{From}$ $\mathrm{this}$ $\mathrm{theorem}$ $\mathrm{we%
}$ $\mathrm{obtain}$%
\begin{equation}
2^{v(\alpha (x)+n(\tau (x)-\frac{1}{p(x)})}|\varphi _{v}\ast f(x)|\leq c%
\big\|f\big\|_{\mathfrak{B}_{p(\cdot ),q(\cdot )}^{\alpha (\cdot ),\tau
(\cdot )}}  \label{emd}
\end{equation}%
$\mathrm{for}$ $\mathrm{any}$ $f\in \mathfrak{B}_{p(\cdot ),q(\cdot
)}^{\alpha (\cdot ),\tau (\cdot )}$, $x\in \mathbb{R}^{n}$, $\alpha ,\tau
\in C_{\mathrm{loc}}^{\log }$, $\mathbb{\tau }^{-}\geq 0$ $\mathrm{and}$ $%
p,q\in \mathcal{P}_{0}^{\log },$ $\mathrm{where}$ $c>0$ $\mathrm{is}$ $%
\mathrm{independent}$ $\mathrm{of}$ $v$ $\mathrm{and}$ $x\mathrm{.}$
\end{remark}

In the following theorem we have the possibility to define these spaces by
replacing $v\geq v_{P}^{+}$ by $v\in \mathbb{N}_{0}$ in Definition \ref%
{B-F-def}, where the main arguments used in its proof rely on \cite[Theorem
3.11]{D5}, so we omit the details and when $\tau :=0$, was obtained by
Sickel \cite{Si}.

\begin{theorem}
Let $\alpha ,\mathbb{\tau }\in C_{\mathrm{loc}}^{\log }$, $\mathbb{\tau }%
^{-}\geq 0$ and $p,q\in \mathcal{P}_{0}^{\log }$ with $p^{+},q^{+}<\infty $.
If $(\tau p-1)^{+}<0$ or\ $(\tau p-1)^{+}\leq 0$\ and $q:=\infty $, then%
\begin{equation*}
\big\|f\big\|_{\mathfrak{B}_{p(\cdot ),q(\cdot )}^{\alpha (\cdot ),\tau
(\cdot )}}^{\ast }=\sup_{P\in \mathcal{Q}}\Big\|\Big(\frac{2^{\alpha \left(
\cdot \right) }\varphi _{v}\ast f}{|P|^{\tau (\cdot )}}\chi _{P}\Big)_{v\in 
\mathbb{N}_{0}}\Big\|_{\ell ^{q(\cdot )}(L^{p(\cdot )})},
\end{equation*}%
is an equivalent quasi-norm in $\mathfrak{B}_{p(\cdot ),q(\cdot )}^{\alpha
(\cdot ),\tau (\cdot )}$.
\end{theorem}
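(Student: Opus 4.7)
The inequality $\|f\|_{\mathfrak{B}_{p(\cdot),q(\cdot)}^{\alpha(\cdot),\tau(\cdot)}} \le \|f\|_{\mathfrak{B}_{p(\cdot),q(\cdot)}^{\alpha(\cdot),\tau(\cdot)}}^{\ast}$ is immediate, since the sequence indexed by $v \in \mathbb{N}_0$ in $\|\cdot\|^{\ast}$ dominates its truncation to $v \ge v_P^+$ in the $\ell^{q(\cdot)}(L^{p(\cdot)})$-semi-modular. For the converse, my first move is to observe that any dyadic $P$ with $|P| \ge 1$ has $v_P^+ = 0$, so its contribution is identical on the two sides. The whole problem thus reduces to controlling, uniformly in $f$ and in dyadic $P$ with $|P| < 1$ (equivalently $v_P \ge 1$), the additional semi-modular terms indexed by $v \in \{0, 1, \ldots, v_P - 1\}$ by $\|f\|_{\mathfrak{B}_{p(\cdot),q(\cdot)}^{\alpha(\cdot),\tau(\cdot)}}$.

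Given such a $P$ and $0 \le v < v_P$, I would let $Q_v \supset P$ denote the unique dyadic cube with $l(Q_v) = 2^{-v}$; then $v = v_{Q_v} = v_{Q_v}^+$, and the one-term estimate extracted from the definition of the $\mathfrak{B}$-quasi-norm applied at cube $Q_v$ supplies
$$\Big\| \frac{2^{v\alpha(\cdot)} \varphi_v \ast f \cdot \chi_{Q_v}}{|Q_v|^{\tau(\cdot)}} \Big\|_{p(\cdot)} \le c\, \|f\|_{\mathfrak{B}_{p(\cdot),q(\cdot)}^{\alpha(\cdot),\tau(\cdot)}}.$$
The core task is then to transfer this estimate from $Q_v$ to the smaller cube $P$. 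Since $\varphi_v \ast f$ is band-limited with frequencies $\lesssim 2^v = l(Q_v)^{-1}$, it is essentially constant on $Q_v$. To quantify this I would apply Lemma \ref{r-trick} with $\omega = \varphi$, $N = 2^v$, and an auxiliary $\theta$ at scale $R = 2^{v_P}$, producing for $x \in P$ a pointwise bound $|\varphi_v \ast f(x)|^r \lesssim \eta_{v,m} \ast |\varphi_v \ast f|^r(x)$; next I would move the factor $2^{v\alpha(\cdot)}$ inside the convolution via Lemma \ref{DHR-lemma} and use \eqref{DHHR} to evaluate $\|\chi_P\|_{p(\cdot)}/\|\chi_{Q_v}\|_{p(\cdot)} \approx (|P|/|Q_v|)^{1/p_P}$ for a log-H\"older representative $p_P$ of $p$ on $Q_v$. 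Assembling these ingredients should yield the scale-comparison inequality
$$\Big\| \frac{2^{v\alpha(\cdot)} \varphi_v \ast f \cdot \chi_P}{|P|^{\tau(\cdot)}} \Big\|_{p(\cdot)} \le c\, 2^{-(v_P - v) n (1/p_P - \tau_P)} \Big\| \frac{2^{v\alpha(\cdot)} \varphi_v \ast f \cdot \chi_{Q_v}}{|Q_v|^{\tau(\cdot)}} \Big\|_{p(\cdot)},$$
the log-H\"older oscillations of $\tau$ between $P$ and $Q_v$ contributing only a uniformly bounded factor absorbed via Lemma \ref{DHHR-estimate}.

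Inserting the previous bound by $\|f\|_{\mathfrak{B}}$ and then taking the $\ell^{q(\cdot)}$-semi-modular over $v \in \{0, \ldots, v_P - 1\}$ will produce a geometric series of ratio $2^{-n(1/p_P - \tau_P) q(\cdot)}$. The hypothesis $(\tau p - 1)^+ < 0$ forces $1/p_P - \tau_P \ge \eta > 0$ uniformly in $P$, so this sum is controlled by a constant independent of $v_P$, delivering $\|f\|^{\ast} \le c \|f\|_{\mathfrak{B}}$. In the degenerate case $q = \infty$ the $\ell^{q(\cdot)}$-structure reduces to a supremum and only boundedness of the prefactor $2^{-(v_P - v)n(1/p_P - \tau_P)}$ is required, which is guaranteed under the weaker assumption $(\tau p - 1)^+ \le 0$.

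The part I expect to be hardest is the pointwise near-constancy step across nested scales. Since $\mathcal{M}$ is not in general bounded on $\ell^{q(\cdot)}(L^{p(\cdot)})$ (cf.~\cite[Example 4.1]{AH}), there is no maximal-function shortcut; I would instead have to work pointwise through the $r$-trick, carefully shift the variable smoothness through the mollifier via Lemma \ref{DHR-lemma}, and finally appeal to Lemmas \ref{Alm-Hastolemma1} and \ref{Key-lemma} to absorb the resulting convolved sequences in $\ell^{q(\cdot)}(L_{p(\cdot)}^{\tau(\cdot)})$. Tracking the three log-H\"older exponents $\alpha, \tau, p$ simultaneously at the scales $l(P)$ and $l(Q_v)$ is the source of most of the bookkeeping, and mirrors the strategy of \cite[Theorem 3.11]{D5}.
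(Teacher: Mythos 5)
Your reduction (identical contribution for $|P|\ge 1$; extra levels $0\le v<v_P$ for $|P|<1$), the expected gain of $2^{-(v_P-v)n(1/p_P-\tau_P)}$ per level, and the geometric summation under $(\tau p-1)^+<0$ (mere boundedness of the prefactor when $q=\infty$ and $(\tau p-1)^+\le 0$) is indeed the intended strategy; the paper gives no proof of this theorem, deferring to \cite[Theorem 3.11]{D5} (and to \cite{Si} for $\tau=0$), and that argument runs along these lines. The genuine problem is your pivotal displayed ``scale-comparison inequality'': with the right-hand side localized to the single ancestor cube $Q_v$ it is false, and no constant uniform in $f$, $v$, $v_P$ can repair it. Dividing out the weights (constant exponents suffice to see the issue), it asserts $\big(|P|^{-1}\int_P|g|^{p}dx\big)^{1/p}\le c\,\big(|Q_v|^{-1}\int_{Q_v}|g|^{p}dx\big)^{1/p}$ for every $g=\varphi_v\ast f$ band-limited to $B(0,2^{v+1})$ and every subcube $P\subset Q_v$, i.e. (letting $P$ shrink to a point $x_0$) that point evaluation on the Paley--Wiener class is bounded by the $L^{p}$-norm over one cube of side $2^{-v}$. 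It is not: were it so for $p=2$, the evaluation functional would be represented by some $h\in L^{2}(Q_v)$ (restrictions of band-limited functions are dense in $L^2(Q_v)$), and testing against translated reproducing kernels forces $\widehat{h\chi_{Q_v}}$ to coincide with the exponential $e^{-ix_0\cdot\xi}$ on the spectral set, hence, by analyticity of Fourier transforms of compactly supported functions, on all of $\mathbb{R}^n$, contradicting $\widehat{h\chi_{Q_v}}\in L^{2}$. So there exist band-limited $g$, realizable as $\varphi_v\ast f$, with $g(x_0)=1$ and $\|g\|_{L^{2}(Q_v)}$ arbitrarily small, and your inequality fails for small $P$ around $x_0$. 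Near-constancy of $\varphi_v\ast f$ at scale $2^{-v}$ is only available in the form of the weighted global average $\eta_{v,m}\ast|\varphi_v\ast f|^{r}$, whose tails cannot be dropped.

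The repair is standard and compatible with the rest of your plan: after the $r$-trick, split $\eta_{v,m}\ast|\varphi_v\ast f|^{r}$ over $3Q_v$ and the translates $Q_v^{k}=Q_v+k\,2^{-v}$, bound the contribution of each same-scale cube by the full quasi-norm $\|f\|_{\mathfrak{B}_{p(\cdot),q(\cdot)}^{\alpha(\cdot),\tau(\cdot)}}$ (this is where Lemma \ref{DHR-lemma}, Lemma \ref{DHHR-estimate} and the modular $\delta$-argument of the Appendix enter), and sum in $k$ using the $(1+|k|)^{-m}$ decay with $m$ large to absorb the log-H\"older losses; the correct intermediate estimate is then $\big\|2^{v\alpha(\cdot)}(\varphi_v\ast f)\chi_P/|P|^{\tau(\cdot)}\big\|_{p(\cdot)}\lesssim 2^{-(v_P-v)n\eta}\|f\|_{\mathfrak{B}_{p(\cdot),q(\cdot)}^{\alpha(\cdot),\tau(\cdot)}}$, with $\eta>0$ furnished by $(\tau p-1)^+<0$ together with $p^{+}<\infty$, and with the full quasi-norm, not the single-cube norm, on the right. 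With that replacement your geometric summation (use the embedding $\ell^{q^-}(L^{p(\cdot)})\hookrightarrow\ell^{q(\cdot)}(L^{p(\cdot)})$ to handle variable $q$) and your $q=\infty$ endpoint go through; be aware, though, that the variable-exponent bookkeeping you postpone with ``should yield'' is precisely where the bulk of the work lies in this setting.
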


Let $\Phi $ and $\varphi $ satisfy, respectively \eqref{Ass1} and %
\eqref{Ass2}. By \cite[pp. 130--131]{FJ90}, there exist \ functions $\Psi
\in \mathcal{S}(\mathbb{R}^{n})$ satisfying \eqref{Ass1} and $\psi \in 
\mathcal{S}(\mathbb{R}^{n})$ satisfying \eqref{Ass2} such that for all $\xi
\in \mathbb{R}^{n}$%
\begin{equation}
\mathcal{F}\widetilde{\Phi }(\xi )\mathcal{F}\Psi (\xi )+\sum_{j=1}^{\infty }%
\mathcal{F}\widetilde{\varphi }(2^{-j}\xi )\mathcal{F}\psi (2^{-j}\xi
)=1,\quad \xi \in \mathbb{R}^{n},  \label{Ass4}
\end{equation}%
where $\widetilde{\Phi }=\overline{\Phi (-\cdot )}$ and $\widetilde{\varphi }%
=\overline{\varphi (-\cdot )}$. Furthermore, we have the following identity
for all $f\in \mathcal{S}^{\prime }(\mathbb{R}^{n})$; see \cite[(12.4)]{FJ90}%
\begin{eqnarray*}
f &=&\Psi \ast \widetilde{\Phi }\ast f+\sum_{v=1}^{\infty }\psi _{v}\ast 
\widetilde{\varphi }_{v}\ast f \\
&=&\sum_{m\in \mathbb{Z}^{n}}\widetilde{\Phi }\ast f(m)\Psi (\cdot
-m)+\sum_{v=1}^{\infty }2^{-vn}\sum_{m\in \mathbb{Z}^{n}}\widetilde{\varphi }%
_{v}\ast f(2^{-v}m)\psi _{v}(\cdot -2^{-v}m).
\end{eqnarray*}%
Recall that the $\varphi $-transform $S_{\varphi }$ is defined by setting $%
(S_{\varphi })_{0,m}=\langle f,\Phi _{m}\rangle $ where $\Phi _{m}(x)=\Phi
(x-m)$ and $(S_{\varphi })_{v,m}=\langle f,\varphi _{v,m}\rangle $ where $%
\varphi _{v,m}(x)=2^{vn/2}\varphi (2^{v}x-m)$ and $v\in \mathbb{N}$. The
inverse $\varphi $-transform $T_{\psi }$ is defined by 
\begin{equation*}
T_{\psi }\lambda =\sum_{m\in \mathbb{Z}^{n}}\lambda _{0,m}\Psi
_{m}+\sum_{v=1}^{\infty }\sum_{m\in \mathbb{Z}^{n}}\lambda _{v,m}\psi _{v,m},
\end{equation*}%
where $\lambda =\{\lambda _{v,m}\in \mathbb{C}:v\in \mathbb{N}_{0},m\in 
\mathbb{Z}^{n}\}$, see \cite{FJ90}.

For any $\gamma \in \mathbb{Z}$, we put%
\begin{equation*}
\left\Vert f\right\Vert _{\mathfrak{B}_{p(\cdot ),q(\cdot )}^{\alpha (\cdot
),\tau (\cdot )}}^{\ast }:=\sup_{P\in \mathcal{Q}}\Big\|\Big(\frac{%
2^{v\alpha \left( \cdot \right) }\varphi _{v}\ast f}{|P|^{\tau (\cdot )}}%
\chi _{P}\Big)_{v\geq v_{P}^{+}-\gamma }\Big\|_{\ell ^{q(\cdot )}(L^{p(\cdot
)})}<\infty
\end{equation*}%
where $\varphi _{-\gamma }$ is replaced by $\Phi _{-\gamma }$.

\begin{lemma}
\label{new-equinorm3}Let $\alpha ,\mathbb{\tau }\in C_{\mathrm{loc}}^{\log }$%
, $\tau ^{-}>0$, $p,q\in \mathcal{P}_{0}^{\log }$ and $0<q^{+}<\infty $. The
quasi-norms $\big\|f\big\|_{\mathfrak{B}_{p(\cdot ),q(\cdot )}^{\alpha
(\cdot ),\tau (\cdot )}}^{\ast }$ and $\big\|f\big\|_{\mathfrak{B}_{p(\cdot
),q(\cdot )}^{\alpha (\cdot ),\tau (\cdot )}}$ are equivalent with
equivalent constants depending on $\gamma $.
\end{lemma}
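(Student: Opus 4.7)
By symmetry assume $\gamma \geq 1$. For each dyadic cube $P$, $\{v \geq v_P^+\} \subset \{v \geq v_P^+ - \gamma\}$, so the sequence underlying $\|f\|_{\mathfrak{B}_{p(\cdot),q(\cdot)}^{\alpha(\cdot),\tau(\cdot)}}$ at $P$ is a tail of the one underlying $\|f\|_{\mathfrak{B}_{p(\cdot),q(\cdot)}^{\alpha(\cdot),\tau(\cdot)}}^{\ast}$ at $P$, and monotonicity of the semi-modular gives $\|f\|_{\mathfrak{B}_{p(\cdot),q(\cdot)}^{\alpha(\cdot),\tau(\cdot)}} \leq \|f\|_{\mathfrak{B}_{p(\cdot),q(\cdot)}^{\alpha(\cdot),\tau(\cdot)}}^{\ast}$. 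Setting $a_v(P) := (2^{v\alpha(\cdot)}\varphi_v \ast f)\chi_P/|P|^{\tau(\cdot)}$ and $E_P := \{v : v_P^+ - \gamma \leq v < v_P^+\}$ (so $|E_P| \leq \gamma$), the quasi-triangle inequality for $\ell^{q(\cdot)}(L^{p(\cdot)})$ combined with
\[
\|(a_v(P))_{v \in E_P}\|_{\ell^{q(\cdot)}(L^{p(\cdot)})} \leq \gamma^{1/q^-}\max_{v \in E_P}\|a_v(P)\|_{p(\cdot)}
\]
(obtained by setting $\lambda_v = 1/\gamma$ in the semi-modular formula and using $q^+<\infty$) reduces the reverse estimate to the individual bound $\|a_v(P)\|_{p(\cdot)} \leq C(\gamma)\|f\|_{\mathfrak{B}_{p(\cdot),q(\cdot)}^{\alpha(\cdot),\tau(\cdot)}}$ for every $v \in E_P$ and $P \in \mathcal{Q}$.

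\textbf{Case $v \geq 0$.} Let $P^{(v)}$ be the dyadic cube containing $P$ with $l(P^{(v)}) = 2^{-v}$; then $v_{P^{(v)}}^+ = v$ and $|P^{(v)}|/|P| \leq 2^{\gamma n}$. Since $\tau$ is bounded, $|P|^{-\tau(x)} \leq 2^{\gamma n \tau^+}|P^{(v)}|^{-\tau(x)}$ on $P$, and therefore
\[
\|a_v(P)\|_{p(\cdot)} \leq 2^{\gamma n \tau^+}\Big\|\tfrac{2^{v\alpha(\cdot)}\varphi_v \ast f}{|P^{(v)}|^{\tau(\cdot)}}\chi_{P^{(v)}}\Big\|_{p(\cdot)} \leq 2^{\gamma n \tau^+}\|f\|_{\mathfrak{B}_{p(\cdot),q(\cdot)}^{\alpha(\cdot),\tau(\cdot)}},
\]
the last step using that the $L^{p(\cdot)}$-norm of any single coordinate of an $\ell^{q(\cdot)}(L^{p(\cdot)})$-sequence is dominated by the sequence's full norm (a direct consequence of the semi-modular definition, which forces the individual infimizer $\lambda_v \leq 1$ and hence $\lambda_v^{1/q(\cdot)} \leq 1$).

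\textbf{Case $v < 0$.} Necessarily $v_P^+ = 0$, so $|P| \geq 1$ and $v \in [-\gamma,-1]$. Since $\mathrm{supp}\,\mathcal{F}\varphi_v \subset \overline{B(0, 2^{v+1})} \subset \overline{B(0, 5/3)}$ (and similarly for $\mathcal{F}\Phi_{-\gamma}$), where $|\mathcal{F}\Phi| \geq c$, the ratio $h_v := \mathcal{F}^{-1}(\mathcal{F}\varphi_v/\mathcal{F}\Phi)$ is a Schwartz function with seminorms bounded uniformly in $v \in [-\gamma, -1]$, and $\varphi_v \ast f = h_v \ast \Phi \ast f$. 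An application of the argument underlying Lemma \ref{key-estimate1} at index $v=0$ (with $\theta = h_v$ and a suitable Schwartz $\omega$, using that $h_v$ is fixed and $\Phi \ast f$ carries the relevant low-frequency information) yields
\[
\Big\|\tfrac{h_v \ast \Phi \ast f}{|P|^{\tau(\cdot)}}\chi_P\Big\|_{p(\cdot)} \leq c\|\Phi \ast f\|_{\widetilde{L_{\tau(\cdot)}^{p(\cdot)}}}.
\]
For each dyadic $P'$ with $|P'| \geq 1$ one has $v_{P'}^+ = 0$, so $\|\Phi \ast f \cdot \chi_{P'}/|P'|^{\tau(\cdot)}\|_{p(\cdot)}$ is the leading coordinate of the standard sequence at $P'$ and is hence $\leq \|f\|_{\mathfrak{B}_{p(\cdot),q(\cdot)}^{\alpha(\cdot),\tau(\cdot)}}$; taking the supremum gives $\|\Phi \ast f\|_{\widetilde{L_{\tau(\cdot)}^{p(\cdot)}}} \leq \|f\|_{\mathfrak{B}_{p(\cdot),q(\cdot)}^{\alpha(\cdot),\tau(\cdot)}}$. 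Together with the uniform boundedness of $2^{v\alpha(x)}$ for $v \in [-\gamma, -1]$ (controlled by $\|\alpha\|_\infty$ and $\gamma$), we conclude $\|a_v(P)\|_{p(\cdot)} \leq C(\gamma)\|f\|_{\mathfrak{B}_{p(\cdot),q(\cdot)}^{\alpha(\cdot),\tau(\cdot)}}$; taking $\sup_P$ in all estimates completes the proof.

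\textbf{Main obstacle.} The delicate point is the case $v<0$, since neither Remark \ref{new-est} (which demands $(\tau p - 1)^- > 0$) nor Lemma \ref{Alm-Hastolemma1} (which demands $\tau^+ < (\tau p)^-$) is available under the weaker hypotheses of the present lemma. The resolution is the multiplier factorization $\varphi_v \ast f = h_v \ast \Phi \ast f$ combined with the Morrey-type estimate of Lemma \ref{key-estimate1}, whose hypotheses $\tau \in C^{\log}_{\mathrm{loc}}$, $\tau^- > 0$ and $p \in \mathcal{P}_0^{\log}$ match those of the current lemma exactly; the hypothesis $\tau^- > 0$ is essential both for Lemma \ref{key-estimate1} and for ensuring that the weight $|P|^{-\tau(\cdot)}$ remains controlled on the large cubes relevant to this case.
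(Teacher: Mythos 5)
Your overall skeleton for the direction $\|f\|^{\ast}_{\mathfrak{B}_{p(\cdot),q(\cdot)}^{\alpha(\cdot),\tau(\cdot)}}\lesssim\|f\|_{\mathfrak{B}_{p(\cdot),q(\cdot)}^{\alpha(\cdot),\tau(\cdot)}}$ (finitely many extra coordinates, cube enlargement for $v\geq 0$, factorization through $\Phi$ plus Lemma \ref{key-estimate1} for $v<0$) is essentially the paper's Step 1, but there is a genuine gap caused by the low-frequency conventions, and it affects both directions. The two quasi-norms do not use the same building blocks at the bottom: in $\|\cdot\|_{\mathfrak{B}_{p(\cdot),q(\cdot)}^{\alpha(\cdot),\tau(\cdot)}}$ the coordinate at $v=0$ is $\Phi\ast f$, whereas in $\|\cdot\|^{\ast}$ only the coordinate at $v=-\gamma$ is a low-pass term ($\Phi_{-\gamma}\ast f$) and the coordinate at $v=0$ is the genuine $\varphi_{0}\ast f=\varphi\ast f$. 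Consequently your opening claim that the standard sequence is a tail of the starred one, so that $\|f\|_{\mathfrak{B}}\leq\|f\|^{\ast}$ follows by monotonicity, is false: the coordinate $\Phi\ast f$ simply does not occur in the starred sequence. This direction is not trivial; the paper devotes its Step 2 to it, writing $\Phi\ast f=\varrho_{-\gamma}\ast\Phi_{-\gamma}\ast f+\sum_{v=1-\gamma}^{1}\varrho_{v}\ast\varphi_{v}\ast f$ (Frazier--Jawerth) and then applying Lemma \ref{key-estimate1} together with \eqref{mod-est}. Symmetrically, in your hard direction the coordinate $\varphi\ast f$ at $v=0$ (which belongs to the starred sequence whenever $0\geq v_{P}^{+}-\gamma$, and also sits in the ``tail'' for cubes with $|P|\geq 1$) is not a coordinate of the standard sequence at any cube, so the single-coordinate domination you invoke in Case $v\geq0$ breaks down at $v=0$; one needs the additional decomposition $\varphi=\eta_{1}\ast\Phi+\eta_{2}\ast\varphi_{1}$, exactly as in the paper's estimate of $I_{P}$.

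A second, smaller error: in Case $v<0$ you assert that necessarily $v_{P}^{+}=0$, hence $|P|\geq1$. This is not so: for cubes with $1\leq v_{P}\leq\gamma$ the index set $E_{P}$ contains negative $v$ while $l(P)<1$. For such $P$ you cannot apply Lemma \ref{key-estimate1} to $P$ itself (the lemma is stated for dyadic cubes with $|P|\geq1$, and indeed $\widetilde{L_{\tau(\cdot)}^{p(\cdot)}}$ only sees such cubes); you must first pass to the containing dyadic cube of side length $\geq1$, paying a constant $c(\gamma)$ as in your Case $v\geq0$ — this is precisely the paper's splitting of $J_{P}$ into $J_{P}^{1}$ and $J_{P}^{2}$ via $P(2^{v_{P}})$. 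Both defects are repairable with the paper's ingredients, but as written the proposal proves neither inequality in full: the ``easy'' direction is missing entirely as an argument, and the hard direction misses the $v=0$ coordinate and the small-cube/negative-$v$ configuration.
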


\begin{proof}
The proof is a straightforward adaptation of \cite[Lemma 3.9]{D6} and \cite[%
Lemma 2.6]{WYY}. We will present the proof into two steps.

\textit{Step 1.} In this step we prove that 
\begin{equation*}
\big\|f\big\|_{\mathfrak{B}_{p(\cdot ),q(\cdot )}^{\alpha (\cdot ),\tau
(\cdot )}}^{\ast }\lesssim \big\|f\big\|_{\mathfrak{B}_{p(\cdot ),q(\cdot
)}^{\alpha (\cdot ),\tau (\cdot )}}.
\end{equation*}%
We need only to consider the case $\gamma >0.$ By the scaling argument, it
suffices to consider the case 
\begin{equation}
\big\|f\big\|_{\mathfrak{B}_{p(\cdot ),q(\cdot )}^{\alpha (\cdot ),\tau
(\cdot )}}=1  \label{modular}
\end{equation}%
and show that the modular of $f$ on the left-hand side is bounded. In
particular, we will show that%
\begin{equation*}
\sum_{v=v_{P}^{+}-\gamma }^{\infty }\Big\|\Big|\frac{2^{v\alpha \left( \cdot
\right) }\varphi _{v}\ast f}{|P|^{\tau \left( \cdot \right) }}\Big|^{q\left(
\cdot \right) }\chi _{P}\Big\|_{\frac{{p(\cdot )}}{q\left( \cdot \right) }%
}\leq c
\end{equation*}%
for any dyadic cube $P$. As in \cite[Lemma 2.6]{WYY}, it suffices to prove
that for all dyadic cube $P$ with $l(P)\geq 1$,%
\begin{equation*}
I_{P}=\sum_{v=-\gamma }^{0}\Big\|\Big|\frac{2^{v\alpha \left( \cdot \right)
}\varphi _{v}\ast f}{|P|^{\tau \left( \cdot \right) }}\Big|^{q\left( \cdot
\right) }\chi _{P}\Big\|_{\frac{{p(\cdot )}}{q\left( \cdot \right) }}\leq c
\end{equation*}%
and for all dyadic cube $P$ with $l(P)<1,$%
\begin{equation*}
J_{P}=\sum_{v=v_{p}-\gamma }^{v_{p}-1}\Big\|\Big|\frac{2^{v\alpha \left(
\cdot \right) }\varphi _{v}\ast f}{|P|^{\tau \left( \cdot \right) }}\Big|%
^{q\left( \cdot \right) }\chi _{P}\Big\|_{\frac{{p(\cdot )}}{q\left( \cdot
\right) }}\leq c.
\end{equation*}%
The estimate of $I_{P},$ clearly follows from the inequality 
\begin{equation*}
\Big\|\Big|\frac{\varphi _{v}\ast f}{|P|^{\tau \left( \cdot \right) }}\Big|%
^{q\left( \cdot \right) }\chi _{P}\Big\|_{\frac{{p(\cdot )}}{q\left( \cdot
\right) }}\leq c
\end{equation*}%
for any $v=-\gamma ,...,0$ and any dyadic cube $P$ with $l(P)\geq 1$. This
claim can be reformulated as showing that%
\begin{equation}
\Big\|\frac{\varphi _{v}\ast f}{|P|^{\tau \left( \cdot \right) }}\chi _{P}%
\Big\|_{p\left( \cdot \right) }\leq c\text{.}  \label{pr1}
\end{equation}%
From $\mathrm{\eqref{Ass1}}$ and $\mathrm{\eqref{Ass2}}$, we find $\omega
_{v}\in \mathcal{S}(%
\mathbb{R}
^{n})$, $v=-\gamma ,...,-1$ and $\eta _{1},\eta _{2}\in \mathcal{S}(%
\mathbb{R}
^{n})$ such that%
\begin{equation*}
\varphi _{v}=\omega _{v}\ast \Phi ,\text{\quad }v=-\gamma ,...,-1\text{\quad
and\quad }\varphi =\varphi _{0}=\eta _{1}\ast \Phi +\eta _{2}\ast \varphi
_{1}.
\end{equation*}%
Therefore 
\begin{equation*}
\varphi _{v}\ast f=\omega _{v}\ast \Phi \ast f\quad \text{for}\quad
v=-\gamma ,...,-1
\end{equation*}%
and 
\begin{equation*}
\varphi _{0}\ast f=\eta _{1}\ast \Phi \ast f+\eta _{2}\ast \varphi _{1}\ast
f.
\end{equation*}%
Using Lemma \ref{key-estimate1}, $\mathrm{\eqref{mod-est}}$ and $\mathrm{%
\eqref{modular}}$ to estimate the left-hand side of (\ref{pr1}) by%
\begin{equation*}
C\big\|\Phi \ast f\big\|_{\widetilde{L_{\tau \left( \cdot \right) }^{p\left(
\cdot \right) }}}+C\big\|\varphi _{1}\ast f\big\|_{\widetilde{L_{\tau \left(
\cdot \right) }^{p\left( \cdot \right) }}}\leq c.
\end{equation*}%
To estimate $J_{P}$, denote by $P(\gamma )$ the dyadic cube containing $P$
with $l(P(\gamma ))=2^{\gamma }l(P).$ If $v_{P}\geq \gamma +1$, applying the
fact that $v_{P(\gamma )}=v_{P}-\gamma ,$ and $P\subset P(\gamma ),$ we then
have%
\begin{equation*}
J_{P}\leq \sum_{v=v_{P\left( \gamma \right) }}^{v_{p}-1}\Big\|\Big|\frac{%
2^{v\alpha \left( \cdot \right) }\varphi _{v}\ast f}{|P\left( \gamma \right)
|^{\tau \left( \cdot \right) }}\Big|^{q\left( \cdot \right) }\chi _{P\left(
\gamma \right) }\Big\|_{\frac{{p(\cdot )}}{q\left( \cdot \right) }}\leq c.
\end{equation*}%
If $1\leq v_{P}\leq \gamma ,$ we write 
\begin{eqnarray*}
J_{P} &=&\sum_{v=v_{P}-\gamma }^{-1}...+\sum_{v=0}^{v_{P}-1}... \\
&=&J_{P}^{1}+J_{P}^{2}.
\end{eqnarray*}%
Let $P(2^{v_{P}})$ the dyadic cube containing $P$ with $%
l(P(2^{v_{P}}))=2^{v_{P}}l(P)=1.$ By the fact that 
\begin{equation*}
\frac{|P(2^{v_{P}})|^{\tau \left( \cdot \right) }}{|P|^{\tau \left( \cdot
\right) }}\lesssim 2^{nv_{P}\tau ^{+}}\lesssim c(\gamma ),
\end{equation*}%
we have%
\begin{equation*}
J_{P}^{2}\lesssim \sum_{v=v_{P(2^{v_{P}})}}^{v_{p}-1}\Big\|\Big|\frac{%
2^{v\alpha \left( \cdot \right) }\varphi _{v}\ast f}{|P(2^{v_{P}})|^{\tau
\left( \cdot \right) }}\Big|^{q\left( \cdot \right) }\chi _{P(2^{v_{P}})}%
\Big\|_{\frac{{p(\cdot )}}{q\left( \cdot \right) }}\leq c.
\end{equation*}%
By the arguments similar to that uesd in the estimate for $I_{P}$ , we
obtain $J_{P}^{1}\leq c.$

\textit{Step 2.} We will prove that%
\begin{equation*}
\big\|f\big\|_{\mathfrak{B}_{p(\cdot ),q(\cdot )}^{\alpha (\cdot ),\tau
(\cdot )}}\lesssim \big\|f\big\|_{\mathfrak{B}_{p(\cdot ),q(\cdot )}^{\alpha
(\cdot ),\tau (\cdot )}}^{\ast }.
\end{equation*}%
It suffices to show that%
\begin{equation*}
\Big\|\Big|\frac{\Phi \ast f}{|P|^{\tau \left( \cdot \right) }}\Big|%
^{q\left( \cdot \right) }\chi _{P}\Big\|_{\frac{{p(\cdot )}}{q\left( \cdot
\right) }}\leq c
\end{equation*}%
for all $P\in \mathcal{Q}$ with $l(P)\geq 1$ and all $f\in \mathcal{B}%
_{p\left( \cdot \right) ,q\left( \cdot \right) }^{\alpha \left( \cdot
\right) ,\tau \left( \cdot \right) }$ with 
\begin{equation*}
\big\|f\big\|_{\mathcal{B}_{p\left( \cdot \right) ,q\left( \cdot \right)
}^{\alpha \left( \cdot \right) ,\tau \left( \cdot \right) }}^{\ast }\leq 1.
\end{equation*}%
This claim can be reformulated as showing that 
\begin{equation*}
\big\|\frac{\Phi \ast f}{|P|^{\tau \left( \cdot \right) }}\chi _{P}\big\|%
_{p\left( \cdot \right) }\leq c.
\end{equation*}%
There exist $\varrho _{v}\in \mathcal{S}(%
\mathbb{R}
^{n})$, $v=-\gamma ,...,1$, such that 
\begin{equation*}
\Phi \ast f=\varrho _{-\gamma }\ast \Phi _{-\gamma }\ast f+\sum_{v=1-\gamma
}^{1}\varrho _{v}\ast \varphi _{v}\ast f,
\end{equation*}%
see \cite[p. 130]{FJ90}. Using Lemma \ref{key-estimate1} we get%
\begin{equation*}
\big\|\varrho _{-\gamma }\ast \Phi _{-\gamma }\ast f\big\|_{\widetilde{%
L_{\tau \left( \cdot \right) }^{p\left( \cdot \right) }}}\lesssim \big\|\Phi
_{-\gamma }\ast f\big\|_{\widetilde{L_{\tau \left( \cdot \right) }^{p\left(
\cdot \right) }}}\leq c,
\end{equation*}%
and%
\begin{equation*}
\big\|\varrho _{v}\ast \varphi _{v}\ast f\big\|_{\widetilde{L_{\tau \left(
\cdot \right) }^{p\left( \cdot \right) }}}\lesssim \big\|\varphi _{v}\ast f%
\big\|_{\widetilde{L_{\tau \left( \cdot \right) }^{p\left( \cdot \right) }}%
}\leq c,\quad v=1-\gamma ,...,1,
\end{equation*}%
by using $\mathrm{\eqref{mod-est}}$ and $\mathrm{\eqref{modular}}$. The
proof is complete.
\end{proof}

\begin{definition}
\label{sequence-space}Let $p,q\in \mathcal{P}_{0}$, $\tau :\mathbb{R}%
^{n}\rightarrow \mathbb{R}^{+}$\ and let $\alpha $ $:\mathbb{R}%
^{n}\rightarrow \mathbb{R}$. Then for all complex valued sequences $\lambda
=\{\lambda _{v,m}\in \mathbb{C}:v\in \mathbb{N}_{0},m\in \mathbb{Z}^{n}\}$
we define%
\begin{equation*}
\mathfrak{b}_{p(\cdot ),q(\cdot )}^{\alpha (\cdot ),\tau (\cdot )}:=\Big\{%
\lambda :\left\Vert \lambda \right\Vert _{\mathfrak{b}_{p(\cdot ),q(\cdot
)}^{\alpha (\cdot ),\tau (\cdot )}}<\infty \Big\},
\end{equation*}%
where%
\begin{equation*}
\left\Vert \lambda \right\Vert _{\mathfrak{b}_{p(\cdot ),q(\cdot )}^{\alpha
(\cdot ),\tau (\cdot )}}:=\sup_{P\in \mathcal{Q}}\Big\|\Big(\frac{%
\sum\limits_{m\in \mathbb{Z}^{n}}2^{v(\alpha \left( \cdot \right) +\frac{n}{2%
})}\lambda _{v,m}\chi _{v,m}}{|P|^{\tau (\cdot )}}\chi _{P}\Big)_{v\geq
v_{P}^{+}}\Big\|_{\ell ^{q(\cdot )}(L^{p(\cdot )})}.
\end{equation*}
\end{definition}

If we replace dyadic cubes $P$ by arbitrary balls $B_{J}$ of $\mathbb{R}^{n}$
with $J\in \mathbb{Z}$, we then obtain equivalent quasi-norms, where the
supremum is taken over all $J\in \mathbb{Z}$\ and all balls $B_{J}$\ of $%
\mathbb{R}^{n}$.

\begin{lemma}
\label{key-lemmasection3}Let $\alpha ,\mathbb{\tau }\in C_{\mathrm{loc}%
}^{\log }$, $\tau ^{-}\geq 0$, $p,q\in \mathcal{P}_{0}^{\log }$, $%
0<q^{+}<\infty $, $v\in \mathbb{N}_{0},m\in \mathbb{Z}^{n}$, $x\in Q_{v,m}$
and $\lambda \in \mathfrak{b}_{p(\cdot ),q(\cdot )}^{\alpha (\cdot ),\tau
(\cdot )}$. Then there exists $c>0$ independent of $x,v$ and $m$ such that%
\begin{equation*}
|\lambda _{v,m}|\leq c\text{ }2^{-v(\alpha (x)+\frac{n}{2})}|Q_{v,m}|^{\tau
(x)}\left\Vert \lambda \right\Vert _{\mathfrak{b}_{p(\cdot ),q(\cdot
)}^{\alpha (\cdot ),\tau (\cdot )}}\left\Vert \chi _{v,m}\right\Vert
_{p(\cdot )}^{-1}.
\end{equation*}
\end{lemma}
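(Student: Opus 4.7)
The plan is to test the definition of $\|\lambda\|_{\mathfrak{b}_{p(\cdot),q(\cdot)}^{\alpha(\cdot),\tau(\cdot)}}$ at the single dyadic cube $P := Q_{v,m}$ and isolate a single level of the $\ell^{q(\cdot)}$ sum. By homogeneity, I may assume $\|\lambda\|_{\mathfrak{b}_{p(\cdot),q(\cdot)}^{\alpha(\cdot),\tau(\cdot)}}=1$ and show the claim with $\|\lambda\|_{\mathfrak{b}}$ replaced by $1$. Since $v\geq 0$ and $l(P)=2^{-v}$, one has $v_P^+=v$, so the sum defining the norm starts at level $k=v$. Using $q^+<\infty$, the modular $\varrho_{\ell^{q(\cdot)}(L^{p(\cdot)})}$ is the sum of the non-negative terms $\bigl\||f_k|^{q(\cdot)}\bigr\|_{p(\cdot)/q(\cdot)}$, and the normalization $\|\lambda\|_{\mathfrak{b}}=1$ forces this whole sum at $P=Q_{v,m}$ to be $\leq 1$. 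Dropping all but the level $k=v$ and using that $\chi_{v,m'}\chi_P$ vanishes for $m'\neq m$ while $\chi_{v,m}\chi_P=\chi_{v,m}$, I obtain
\begin{equation*}
\Big\|\frac{2^{v(\alpha(\cdot)+n/2)}|\lambda_{v,m}|\,\chi_{v,m}}{|Q_{v,m}|^{\tau(\cdot)}}\Big\|_{p(\cdot)}\leq 1.
\end{equation*}

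The next step is to freeze the variable exponents $\alpha(\cdot)$ and $\tau(\cdot)$ at the point $x\in Q_{v,m}$. For every $y\in Q_{v,m}$ one has $|x-y|\leq \sqrt{n}\,2^{-v}$, so local log-Hölder continuity gives
\begin{equation*}
v|\alpha(x)-\alpha(y)|+vn|\tau(x)-\tau(y)|\leq \frac{\bigl(c_{\log}(\alpha)+n\,c_{\log}(\tau)\bigr)v}{\log(e+2^v/\sqrt{n})}\leq C,
\end{equation*}
uniformly in $v\in\mathbb{N}_0$. Therefore, for all $y\in Q_{v,m}$,
\begin{equation*}
2^{v\alpha(y)}\approx 2^{v\alpha(x)}\qquad\text{and}\qquad |Q_{v,m}|^{\tau(y)}\approx |Q_{v,m}|^{\tau(x)},
\end{equation*}
with constants independent of $v,m,x$. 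Substituting into the inequality above, I may replace $\alpha(\cdot)$ and $\tau(\cdot)$ inside the $L^{p(\cdot)}$-norm by their values at $x$, at the cost of a fixed multiplicative constant.

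Once the exponents are frozen, the constants $2^{v(\alpha(x)+n/2)}|\lambda_{v,m}|\,|Q_{v,m}|^{-\tau(x)}$ factor out of $\|\cdot\|_{p(\cdot)}$, leaving
\begin{equation*}
2^{v(\alpha(x)+n/2)}|\lambda_{v,m}|\,|Q_{v,m}|^{-\tau(x)}\,\|\chi_{v,m}\|_{p(\cdot)}\leq c,
\end{equation*}
which rearranges to the stated bound (after restoring $\|\lambda\|_{\mathfrak{b}_{p(\cdot),q(\cdot)}^{\alpha(\cdot),\tau(\cdot)}}$ on the right by homogeneity). The only nontrivial ingredient is the log-Hölder transfer of the smoothness and growth weights from $y$ to $x$ on the cube $Q_{v,m}$, which is completely analogous to Lemma~\ref{DHR-lemma}; all remaining steps are a direct unpacking of Definition~\ref{sequence-space} and of the relationship $\|g\|_{p(\cdot)}\leq 1\iff \varrho_{p(\cdot)}(g)\leq 1$.
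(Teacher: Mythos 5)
Your proof is correct, but it follows a somewhat different route than the paper's. Both arguments hinge on the same key point, namely the local log-H\"older freezing $2^{v\alpha(y)}\approx 2^{v\alpha(x)}$ and $|Q_{v,m}|^{\tau(y)}\approx|Q_{v,m}|^{\tau(x)}$ for $x,y\in Q_{v,m}$, uniformly in $v$ and $m$. After that the mechanisms diverge: the paper writes $|\lambda_{v,m}|^{p^-}$ as its average over $Q_{v,m}$, moves the weights inside the integral, applies H\"older's inequality with the exponent pair $p(\cdot)/p^-$ and $(p(\cdot)/p^-)'$, and then invokes the relation \eqref{DHHR}, $\|\chi_B\|_{p(\cdot)}\|\chi_B\|_{p'(\cdot)}\approx|B|$, to convert the dual norm of $\chi_{v,m}$ into $|Q_{v,m}|\,\|\chi_{v,m}\|_{p(\cdot)/p^-}^{-1}$. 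You instead test the $\mathfrak{b}_{p(\cdot),q(\cdot)}^{\alpha(\cdot),\tau(\cdot)}$-norm at the single cube $P=Q_{v,m}$ (where indeed $v_P^+=v$ and only the cube $m$ survives at level $v$), pass from the norm bound to the modular bound using $q^+<\infty$ and the unit-ball property of the semimodular, and then simply pull the frozen constants out of the Luxemburg quasi-norm by monotonicity and homogeneity. This avoids H\"older's inequality, the $p^-$-power trick, and \eqref{DHHR} altogether; what it uses in exchange (the unit-ball property of $\ell^{q(\cdot)}(L^{p(\cdot)})$ and of $L^{p(\cdot)/q(\cdot)}$, plus monotonicity of $\|\cdot\|_{p(\cdot)}$) is standard and is employed repeatedly elsewhere in the paper, e.g. in \eqref{mod-est} and in the scaling arguments. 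Note also that the step you make explicit — that a single level and a single cube of the mixed norm controls the corresponding $L^{p(\cdot)}$-norm — is implicitly needed in the paper's proof as well, when the $L^{p(\cdot)/p^-}$-norm of the weighted term is bounded by $\|\lambda\|_{\mathfrak{b}_{p(\cdot),q(\cdot)}^{\alpha(\cdot),\tau(\cdot)}}^{p^-}$; so your write-up is, if anything, more self-contained on this point, while the paper's argument has the feature of not requiring the passage through the modular.
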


\begin{proof}
Let $\lambda \in \mathfrak{b}_{p(\cdot ),q(\cdot )}^{\alpha (\cdot ),\tau
(\cdot )},v\in \mathbb{N}_{0},m\in \mathbb{Z}^{n}$ and $x\in Q_{v,m}$, with $%
Q_{v,m}\in \mathcal{Q}$. Then 
\begin{equation*}
|\lambda _{v,m}|^{p^{-}}=|Q_{v,m}|^{-1}\int_{Q_{v,m}}|\lambda
_{v,m}|^{p^{-}}\chi _{v,m}(y)dy.
\end{equation*}%
Using the fact that $2^{v(\alpha \left( x\right) -\alpha \left( y\right)
)}\leq c$ and $2^{v(\tau \left( x\right) -\tau \left( y\right) )}\leq c$ for
any $x,y\in Q_{v,m}$, we obtain%
\begin{equation*}
\frac{2^{v(\alpha \left( x\right) +\frac{n}{2})p^{-}}}{|Q_{v,m}|^{p^{-}\tau
(x)}}|\lambda _{v,m}|^{p^{-}}\lesssim |Q_{v,m}|^{-1}\int_{Q_{v,m}}\frac{%
2^{v(\alpha \left( y\right) +\frac{n}{2})p^{-}}}{|Q_{v,m}|^{p^{-}\tau (y)}}%
|\lambda _{v,m}|^{p^{-}}\chi _{v,m}(y)dy.
\end{equation*}%
Applying H\"{o}lder's inequality to estimate this expression by 
\begin{eqnarray*}
&&c|Q_{v,m}|^{-1}\Big\|\frac{2^{v(\alpha \left( \cdot \right) +\frac{n}{2}%
)p^{-}}}{|Q_{v,m}|^{p^{-}\tau (\cdot )}}|\lambda _{v,m}|^{p^{-}}\chi _{v,m}%
\Big\|_{\frac{p}{p^{-}}}\left\Vert \chi _{v,m}\right\Vert _{(\frac{p}{p^{-}}%
)^{\prime }} \\
&\lesssim &\left\Vert \lambda \right\Vert _{\mathfrak{b}_{p(\cdot ),q(\cdot
)}^{\alpha (\cdot ),\tau (\cdot )}}^{p^{-}}\left\Vert \chi _{v,m}\right\Vert
_{\frac{p}{p^{-}}}^{-1},
\end{eqnarray*}%
where we have used \eqref{DHHR}. Therefore for any $x\in Q_{v,m}$%
\begin{equation*}
|\lambda _{v,m}|\lesssim \text{ }2^{-v(\alpha (x)+\frac{n}{2}%
)}|Q_{v,m}|^{\tau (x)}\left\Vert \lambda \right\Vert _{\mathfrak{b}_{p(\cdot
),q(\cdot )}^{\alpha (\cdot ),\tau (\cdot )}}\left\Vert \chi
_{v,m}\right\Vert _{p(\cdot )}^{-1},
\end{equation*}%
which completes the proof.
\end{proof}

As in \cite{D6}, and using Lemma \ref{key-lemmasection3} we obtain the
following statement.

\begin{lemma}
Let $\alpha ,\mathbb{\tau }\in C_{\mathrm{loc}}^{\log }$, $\mathbb{\tau }%
^{-}\geq 0$, $p,q\in \mathcal{P}_{0}^{\log }$ and $\Psi $, $\psi \in 
\mathcal{S}(\mathbb{R}^{n})$ satisfy, respectively, \eqref{Ass1} and %
\eqref{Ass2}. Then for all $\lambda \in \mathfrak{b}_{p(\cdot ),q(\cdot
)}^{\alpha (\cdot ),\tau (\cdot )}$%
\begin{equation*}
T_{\psi }\lambda :=\sum_{m\in \mathbb{Z}^{n}}\lambda _{0,m}\Psi
_{m}+\sum_{v=1}^{\infty }\sum_{m\in \mathbb{Z}^{n}}\lambda _{v,m}\psi _{v,m},
\end{equation*}%
converges in $\mathcal{S}^{\prime }(\mathbb{R}^{n})$; moreover, $T_{\psi }:%
\mathfrak{b}_{p(\cdot ),q(\cdot )}^{\alpha (\cdot ),\tau (\cdot
)}\rightarrow \mathcal{S}^{\prime }(\mathbb{R}^{n})$ is continuous.
\end{lemma}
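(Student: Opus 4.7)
The plan is to pair $T_\psi \lambda$ against an arbitrary test function $\phi \in \mathcal{S}(\mathbb{R}^n)$ and to show that the resulting numerical series converges absolutely, with a bound of the form $C \|\lambda\|_{\mathfrak{b}_{p(\cdot),q(\cdot)}^{\alpha(\cdot),\tau(\cdot)}} \cdot \mathfrak{p}_N(\phi)$ for some Schwartz seminorm $\mathfrak{p}_N$. A single such estimate delivers both the $\mathcal{S}'$-convergence of the series defining $T_\psi \lambda$ and the continuity of $T_\psi : \mathfrak{b}_{p(\cdot),q(\cdot)}^{\alpha(\cdot),\tau(\cdot)} \to \mathcal{S}'(\mathbb{R}^n)$ in one stroke.

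Two ingredients are needed. First, a pointwise upper bound on $|\lambda_{v,m}|$: applying Lemma \ref{key-lemmasection3} at the center $c_{v,m}$ of $Q_{v,m}$, and using \eqref{DHHR} together with the log-H\"older regularity of $1/p$ to get the standard equivalence $\|\chi_{v,m}\|_{p(\cdot)} \approx |Q_{v,m}|^{1/p(c_{v,m})}$ with constants independent of $v,m$ (valid since $l(Q_{v,m}) \leq 1$), one obtains
\[
|\lambda_{v,m}| \lesssim 2^{-vn/2} \cdot 2^{v\kappa} \cdot \|\lambda\|_{\mathfrak{b}_{p(\cdot),q(\cdot)}^{\alpha(\cdot),\tau(\cdot)}}, \qquad v \geq 1,
\]
for some constant $\kappa \geq 0$ depending only on $\|\alpha\|_\infty$, $\|\tau\|_\infty$ and $1/p^-$, all finite because $\alpha, \tau \in C_{\mathrm{loc}}^{\log} \subset L^\infty$ and $p \in \mathcal{P}_0^{\log}$. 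Second, a bound on the pairings $\langle \psi_{v,m}, \phi\rangle$: since $\mathrm{supp}\,\mathcal{F}\psi \subset \overline{B(0,2)} \setminus B(0,1/2)$, $\psi$ has vanishing moments of all orders, and a standard change of variables / Taylor expansion / Schwartz decay argument yields, for every $M \in \mathbb{N}$ and $L > 0$,
\[
|\langle \psi_{v,m}, \phi \rangle| \lesssim 2^{-vn/2} \cdot 2^{-vM} \cdot (1 + |2^{-v}m|)^{-L} \cdot \mathfrak{p}_{M,L}(\phi).
\]

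Combining the two bounds and summing over $m$ at level $v \geq 1$, a Riemann-sum comparison gives $\sum_{m \in \mathbb{Z}^n} (1 + |2^{-v}m|)^{-L} \lesssim 2^{vn}$ (uniformly in $v \geq 0$ as soon as $L > n$), which cancels the $2^{-vn}$ prefactor and leaves $2^{v(\kappa - M)}$. Choosing $M > \kappa$ makes the series in $v$ geometrically convergent, giving the desired bound by $C \|\lambda\|_{\mathfrak{b}} \cdot \mathfrak{p}_{M,L}(\phi)$. The $v = 0$ contribution is handled in the same spirit: Lemma \ref{key-lemmasection3} and \eqref{DHHR} applied to unit cubes (where $\|\chi_{0,m}\|_{p(\cdot)}$ is bounded above and below uniformly in $m$) give $|\lambda_{0,m}| \lesssim \|\lambda\|_{\mathfrak{b}}$, while $|\langle \Psi_m, \phi\rangle| \lesssim (1+|m|)^{-L}\, \mathfrak{p}_L(\phi)$ from pure Schwartz decay; summation is immediate for $L > n$.

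The main obstacle is the coefficient bound above, where one must upgrade the modular-type estimate of Lemma \ref{key-lemmasection3} into a clean pointwise inequality in which the variable exponents enter only through bounded quantities evaluated at a fixed reference point. This rests on the equivalence $\|\chi_{v,m}\|_{p(\cdot)} \approx |Q_{v,m}|^{1/p(c_{v,m})}$ for small dyadic cubes, a standard but non-trivial consequence of \eqref{DHHR} and $p \in \mathcal{P}_0^{\log}$, and it is precisely this regularity hypothesis that lets the constants be absorbed into a single $\kappa$. After that, everything is automatic: the vanishing moments of $\psi$ furnish arbitrary polynomial decay in $2^{-v}$, which comfortably absorbs the bounded linear-in-$v$ factor $2^{v\kappa}$ coming from $\alpha$, $\tau$ and $1/p$, and the rest of the argument is routine summation.
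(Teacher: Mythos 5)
Your proposal is correct and follows essentially the route the paper intends: the paper defers to the argument of \cite{D6} combined with Lemma \ref{key-lemmasection3}, which is exactly your scheme of turning the modular estimate into a pointwise coefficient bound $|\lambda_{v,m}|\lesssim 2^{-vn/2}2^{v\kappa}\|\lambda\|_{\mathfrak{b}_{p(\cdot),q(\cdot)}^{\alpha(\cdot),\tau(\cdot)}}$ via \eqref{DHHR} and the log-H\"older regularity, pairing with a test function using the vanishing moments and rapid decay of $\psi$ to gain $2^{-vM}(1+|2^{-v}m|)^{-L}$, and summing in $m$ and $v$. The resulting estimate $|\langle T_\psi\lambda,\phi\rangle|\leq C\|\lambda\|_{\mathfrak{b}_{p(\cdot),q(\cdot)}^{\alpha(\cdot),\tau(\cdot)}}\,\mathfrak{p}_N(\phi)$ indeed yields both the convergence in $\mathcal{S}'(\mathbb{R}^n)$ and the continuity of $T_\psi$, so no gap remains.
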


For a sequence $\lambda =\{\lambda _{v,m}\in \mathbb{C}:v\in \mathbb{N}%
_{0},m\in \mathbb{Z}^{n}\},0<r\leq \infty $ and a fixed $d>0$, set%
\begin{equation*}
\lambda _{v,m,r,d}^{\ast }:=\Big(\sum_{h\in \mathbb{Z}^{n}}\frac{|\lambda
_{v,h}|^{r}}{(1+2^{v}|2^{-v}h-2^{-v}m|)^{d}}\Big)^{\frac{1}{r}}
\end{equation*}%
and $\lambda _{r,d}^{\ast }:=\{\lambda _{v,m,r,d}^{\ast }\in \mathbb{C}:v\in 
\mathbb{N}_{0},m\in \mathbb{Z}^{n}\}$.

The proof of the following lemma is postponed to the Appendix.

\begin{lemma}
\label{lamda-equi}Let $\alpha ,\mathbb{\tau }\in C_{\mathrm{loc}}^{\log }$, $%
\tau ^{-}>0$, $p,q\in \mathcal{P}_{0}^{\log }$, $0<q^{+}<\infty $ and $0<r<%
\frac{(\tau p)^{-}}{\tau ^{+}}$. Then for $d$ large enough%
\begin{equation*}
\left\Vert \lambda _{r,d}^{\ast }\right\Vert _{\mathfrak{b}_{p(\cdot
),q(\cdot )}^{\alpha (\cdot ),\tau (\cdot )}}\approx \left\Vert \lambda
\right\Vert _{\mathfrak{b}_{p(\cdot ),q(\cdot )}^{\alpha (\cdot ),\tau
(\cdot )}}.
\end{equation*}
\end{lemma}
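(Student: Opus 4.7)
The estimate $\|\lambda\|_{\mathfrak{b}_{p(\cdot),q(\cdot)}^{\alpha(\cdot),\tau(\cdot)}}\leq \|\lambda_{r,d}^{\ast}\|_{\mathfrak{b}_{p(\cdot),q(\cdot)}^{\alpha(\cdot),\tau(\cdot)}}$ is immediate: isolating the single term $h=m$ in the defining sum gives $|\lambda_{v,m}|\leq \lambda_{v,m,r,d}^{\ast}$, and both expressions enter the $\mathfrak{b}$-quasi-norm monotonically. All the work is in the reverse direction.

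For the reverse inequality I would first establish a pointwise linearization. Fix $v\in\mathbb{N}_{0}$, $x\in Q_{v,m}$, and let $y\in Q_{v,h}$. The triangle inequality together with $|x-x_{Q_{v,m}}|,|y-x_{Q_{v,h}}|\leq c\,2^{-v}$ yields $1+|h-m|\approx 1+2^{v}|x-y|$. Hence
\begin{equation*}
(\lambda_{v,m,r,d}^{\ast})^{r}=\sum_{h}\frac{|\lambda_{v,h}|^{r}}{(1+|h-m|)^{d}}\lesssim \sum_{h}|\lambda_{v,h}|^{r}\,2^{vn}\!\int_{Q_{v,h}}\!(1+2^{v}|x-y|)^{-d}dy=\eta_{v,d}\ast F_{v}^{r}(x),
\end{equation*}
where $F_{v}:=\sum_{h}|\lambda_{v,h}|\chi_{v,h}$. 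Since $\{Q_{v,m}\}_{m}$ partition $\mathbb{R}^{n}$, summing over $m$ gives $\sum_{m}\lambda_{v,m,r,d}^{\ast}\chi_{v,m}(x)\lesssim (\eta_{v,d}\ast F_{v}^{r}(x))^{1/r}$. Next I would absorb the weight $2^{v(\alpha(\cdot)+n/2)}$ inside the convolution: applying Lemma \ref{DHR-lemma} to the log-H\"older function $r\alpha$ (and distributing the factor $2^{vnr/2}$) one obtains, for $d$ sufficiently large and some $R\geq c_{\log}(r\alpha)$,
\begin{equation*}
2^{v(\alpha(x)+n/2)}(\eta_{v,d}\ast F_{v}^{r}(x))^{1/r}\lesssim (\eta_{v,d-R}\ast G_{v}^{r}(x))^{1/r},\qquad G_{v}:=\sum_{h}2^{v(\alpha(\cdot)+n/2)}|\lambda_{v,h}|\chi_{v,h}.
\end{equation*}

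With these pointwise bounds in hand I would conclude by Lemma \ref{Alm-Hastolemma1} after an $r$-rescaling of exponents. Pulling $\chi_{P}^{1/r}=\chi_{P}$ and $|P|^{-\tau(\cdot)}=|P|^{-r\tau(\cdot)/r}$ inside the root and using the standard modular scaling $\|h^{1/r}\|_{p(\cdot)}=\|h\|_{p(\cdot)/r}^{1/r}$ (and the analogous relation at the level of the mixed semi-modular valid because $0<q^{+}<\infty$), the problem reduces to estimating
\begin{equation*}
\sup_{P\in\mathcal{Q}}\Big\|\Big(\frac{\eta_{v,d-R}\ast G_{v}^{r}}{|P|^{r\tau(\cdot)}}\chi_{P}\Big)_{v\geq v_{P}^{+}}\Big\|_{\ell^{q(\cdot)/r}(L^{p(\cdot)/r})}=\big\|(\eta_{v,d-R}\ast G_{v}^{r})_{v}\big\|_{\ell^{q(\cdot)/r}(L_{p(\cdot)/r}^{r\tau(\cdot)})}.
\end{equation*}
Lemma \ref{Alm-Hastolemma1} applies to the rescaled triple $(p/r,q/r,r\tau)$: the hypothesis $r<(\tau p)^{-}/\tau^{+}$ gives precisely $(r\tau)^{+}<(r\tau\cdot p/r)^{-}=(\tau p)^{-}$, and the same inequality forces $r\tau(x)\leq \tau^{+}r<\tau(x)p(x)$, whence $r<p(x)$ so that $p/r\in\mathcal{P}^{\log}$; all the remaining regularity conditions transfer directly from $p,q,\tau$. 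The lemma then gives an upper bound by $\|(G_{v}^{r})\|_{\ell^{q(\cdot)/r}(L_{p(\cdot)/r}^{r\tau(\cdot)})}$, and undoing the rescaling yields $\|\lambda_{r,d}^{\ast}\|_{\mathfrak{b}_{p(\cdot),q(\cdot)}^{\alpha(\cdot),\tau(\cdot)}}\lesssim \|\lambda\|_{\mathfrak{b}_{p(\cdot),q(\cdot)}^{\alpha(\cdot),\tau(\cdot)}}$.

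\emph{The main obstacle} is Step 3, specifically verifying that the scaling relation between the mixed quasi-norms $\ell^{q(\cdot)}(L^{p(\cdot)})$ and $\ell^{q(\cdot)/r}(L^{p(\cdot)/r})$ passes through the $r$-th-power map up to absolute constants in the genuinely variable exponent setting, and checking that the rescaled exponents satisfy every hypothesis of Lemma \ref{Alm-Hastolemma1}. This is exactly where the sharp admissibility condition $r<(\tau p)^{-}/\tau^{+}$ enters: it is precisely what translates, after rescaling, into the condition $\tau^{+}<(\tau p)^{-}$ required by the maximal inequality of Lemma \ref{Alm-Hastolemma1}.
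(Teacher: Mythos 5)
Your proposal is correct and follows essentially the same route as the paper: dominate $\lambda^{\ast}_{v,m,r,d}$ pointwise by $(\eta_{v,\cdot}\ast G_{v}^{r})^{1/r}$ with $G_{v}=\sum_{h}2^{v(\alpha(\cdot)+n/2)}|\lambda_{v,h}|\chi_{v,h}$ and then invoke Lemma \ref{Alm-Hastolemma1} in the $r$-rescaled scale $\ell^{q(\cdot)/r}(L_{p(\cdot)/r}^{r\tau(\cdot)})$, where $r<(\tau p)^{-}/\tau^{+}$ is exactly what yields $(r\tau)^{+}<(\tau p)^{-}$ and $p/r\geq 1$. The only (cosmetic) difference is the pointwise step: the paper splits the $h$-sum into dyadic annuli and bounds the resulting cube averages by $2^{kL}\eta_{v,L}\ast g_{v}(x)$, summing a geometric series for $d>n+a+L$, whereas you compare the discrete kernel $(1+|h-m|)^{-d}$ directly with $\eta_{v,d}$ and absorb $2^{vr\alpha(x)}$ via Lemma \ref{DHR-lemma}; the two devices are equivalent.
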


By this result, Lemma \ref{new-equinorm3} and by the same arguments given in 
\cite[Theorem 3.14]{D6} we obtain the following statement.

\begin{theorem}
\label{phi-tran}Let $\alpha ,\mathbb{\tau }\in C_{\mathrm{loc}}^{\log }$, $%
\tau ^{-}>0$, $p,q\in \mathcal{P}_{0}^{\log }$ and $0<q^{+}<\infty $. 
\textit{Suppose that }$\Phi $, $\Psi \in \mathcal{S}(\mathbb{R}^{n})$
satisfying \eqref{Ass1} and $\varphi ,\psi \in \mathcal{S}(\mathbb{R}^{n})$
satisfy \eqref{Ass2} such that \eqref{Ass4} holds. The operators $S_{\varphi
}:\mathfrak{B}_{p\left( \cdot \right) ,q\left( \cdot \right) }^{\alpha
\left( \cdot \right) ,\tau (\cdot )}\rightarrow \mathfrak{b}_{p(\cdot
),q(\cdot )}^{\alpha (\cdot ),\tau (\cdot )}$ and $T_{\psi }:\mathfrak{b}%
_{p\left( \cdot \right) ,q\left( \cdot \right) }^{\alpha \left( \cdot
\right) ,\tau (\cdot )}\rightarrow \mathfrak{B}_{p\left( \cdot \right)
,q\left( \cdot \right) }^{\alpha \left( \cdot \right) ,\tau (\cdot )}$ are
bounded. Furthermore, $T_{\psi }\circ S_{\varphi }$ is the identity on $%
\mathfrak{B}_{p\left( \cdot \right) ,q\left( \cdot \right) }^{\alpha \left(
\cdot \right) ,\tau (\cdot )}$.
\end{theorem}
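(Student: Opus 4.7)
The plan is to follow the classical Frazier--Jawerth scheme as executed in \cite[Theorem 3.14]{D6}, splitting the proof into three parts: the boundedness of $S_\varphi:\mathfrak{B}_{p(\cdot),q(\cdot)}^{\alpha(\cdot),\tau(\cdot)}\to \mathfrak{b}_{p(\cdot),q(\cdot)}^{\alpha(\cdot),\tau(\cdot)}$, the boundedness of $T_\psi$ in the opposite direction (which also subsumes the $\mathcal{S}'$-convergence already recorded earlier), and the reproducing identity $T_\psi\circ S_\varphi=\mathrm{Id}$. This last point is essentially free once the first two are established, since the Calderón-type formula \eqref{Ass4} gives $f=T_\psi(S_\varphi f)$ with convergence in $\mathcal{S}'(\mathbb{R}^n)$.

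For the boundedness of $S_\varphi$, I would start from $\lambda_{v,m}=\langle f,\varphi_{v,m}\rangle = 2^{-vn/2}\,\widetilde{\varphi}_v\ast f(2^{-v}m)$ (with $\Phi$ playing the role of $\varphi$ at $v=0$). Since $2^{-v}m\in Q_{v,m}$, Lemma \ref{r-trick} with $\theta=\widetilde{\varphi}$ and $N=R=2^v$ produces, for any $r>0$ and sufficiently large $M$,
\[
|\lambda_{v,m}|\;\lesssim\;2^{-vn/2}\bigl(\eta_{v,M}\ast|\varphi_v\ast f|^{r}(x)\bigr)^{1/r},\qquad x\in Q_{v,m}.
\]
Multiplying by $2^{v(\alpha(x)+n/2)}\chi_{v,m}(x)$, summing over $m$, and applying Lemma \ref{DHR-lemma} to pull $2^{v\alpha(\cdot)}$ inside the convolution gives
\[
\sum_{m}2^{v(\alpha(x)+\tfrac{n}{2})}|\lambda_{v,m}|\chi_{v,m}(x)\;\lesssim\;\bigl(\eta_{v,M'}\ast(2^{v\alpha(\cdot)}|\varphi_v\ast f|)^{r}\bigr)^{1/r}(x).
\]
Dividing by $|P|^{\tau(\cdot)}$, restricting to $P$, and taking the $\ell^{q(\cdot)}(L^{p(\cdot)})$-norm, one chooses $r$ with $0<r<(\tau p)^-/\tau^+$ and applies Lemma \ref{Alm-Hastolemma1} to the sequence $(2^{v\alpha(\cdot)}|\varphi_v\ast f|^r)_v$ (at the rescaled exponents $p(\cdot)/r, q(\cdot)/r$) to close the estimate.

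For the boundedness of $T_\psi$, the core ingredient is the almost-orthogonality between the analyzing $\varphi_v$ and the synthesis molecules $\psi_{k,m}$: since both are Schwartz with annular Fourier supports,
\[
|\varphi_v\ast\psi_{k,m}(x)|\;\lesssim\;2^{-|v-k|N}\,2^{kn/2}\bigl(1+2^{\min(v,k)}|x-2^{-k}m|\bigr)^{-M}
\]
for every $N,M>0$. This yields the pointwise bound
\[
|\varphi_v\ast T_\psi\lambda(x)|\;\lesssim\;\sum_{k\geq 0}2^{-|v-k|N}\sum_{m\in\mathbb{Z}^n}\frac{2^{kn/2}|\lambda_{k,m}|}{(1+2^{\min(v,k)}|x-2^{-k}m|)^{M}}.
\]
For $k\leq v$ the inner sum is controlled (after discretizing by the grid $\{Q_{k,m}\}$) by $\eta_{k,d}$ convolved against $\sum_m 2^{k\alpha(\cdot)+kn/2}|\lambda_{k,m}|\chi_{k,m}$; for $k\geq v$, the points $2^{-k}m$ must be regrouped on the coarser scale of $v$, and the standard Peetre-type device bounds the sum by a multiple of $\lambda^*_{k,\cdot,r,d}$ (raised to the admissible power). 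Combining both regimes with the shift of Lemma \ref{DHR-lemma} and distributing $2^{v\alpha(x)}$, one obtains an estimate of the form $2^{v\alpha(x)}|\varphi_v\ast T_\psi\lambda(x)|\lesssim \sum_k 2^{-|v-k|\delta} g_k(x)$. Taking the localized $\ell^{q(\cdot)}(L^{p(\cdot)})$-quasi\-norm, I would chain Lemma \ref{Key-lemma} to absorb $2^{-|v-k|\delta}$, Lemma \ref{Alm-Hastolemma1} for the remaining $\eta$-convolution, and finally Lemma \ref{lamda-equi} to replace $\lambda^*_{r,d}$ by $\lambda$.

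The principal obstacle is the $k\geq v$ regime of the $T_\psi$ estimate, where the molecules $\psi_{k,m}$ are finer than the analyzing $\varphi_v$ and the natural $v$-scale grouping of the $2^{-k}m$ competes with the $\tau$-weighted localization of the target norm. This is precisely where the hypothesis $\tau^->0$ and the flexibility of choosing $0<r<(\tau p)^-/\tau^+$ enter: it guarantees that Lemma \ref{lamda-equi} applies so that the Peetre-type maximal function $\lambda^*_{r,d}$ is equivalent to $\lambda$, without which the reduction collapses. Once this step is controlled, the identity $T_\psi\circ S_\varphi=\mathrm{Id}$ is simply the distributional form of \eqref{Ass4}, and the remaining arguments are bookkeeping essentially identical to those in \cite[Theorem 3.14]{D6} and \cite[Section 3]{WYY}.
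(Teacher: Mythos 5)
Your overall architecture (bound $S_{\varphi}$ via the $r$-trick and Lemma \ref{Alm-Hastolemma1}; bound $T_{\psi}$ via almost orthogonality together with Lemmas \ref{Key-lemma}, \ref{Alm-Hastolemma1} and \ref{lamda-equi}; get the identity from \eqref{Ass4}) is the same Frazier--Jawerth scheme the paper invokes through \cite[Theorem 3.14]{D6}, and your $T_{\psi}$ part and the reproducing identity are consistent with it. The genuine gap is in your $S_{\varphi}$ estimate. Lemma \ref{r-trick} bounds $|\theta _{R}\ast \omega _{N}\ast g(x)|$ by $c\,(\eta _{N,M}\ast |\omega _{N}\ast g|^{r}(x))^{1/r}$, i.e.\ the \emph{same} inner block $\omega _{N}\ast g$ must appear on both sides. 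Your coefficient is $\lambda _{v,m}=2^{-vn/2}\,\widetilde{\varphi }_{v}\ast f(2^{-v}m)$, and $\widetilde{\varphi }_{v}\ast f$ is in general not of the form $\theta _{2^{v}}\ast (\varphi _{v}\ast f)$: one cannot write $\widetilde{\varphi }=\theta \ast \varphi $ with $\theta \in \mathcal{S}(\mathbb{R}^{n})$, because \eqref{Ass2} guarantees $|\mathcal{F}\varphi |\geq c$ only on $3/5\leq |\xi |\leq 5/3$ while $\mathcal{F}\varphi $ may vanish elsewhere in its support, so $\overline{\mathcal{F}\varphi }/\mathcal{F}\varphi $ need not extend to an admissible multiplier. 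Hence your displayed inequality $|\lambda _{v,m}|\lesssim 2^{-vn/2}(\eta _{v,M}\ast |\varphi _{v}\ast f|^{r}(x))^{1/r}$ does not follow from Lemma \ref{r-trick} as you apply it.

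If instead you apply Lemma \ref{r-trick} with $\omega =\widetilde{\varphi }$ (which is legitimate), every subsequent step produces the quasi-norm of $f$ built from the system $(\widetilde{\Phi },\widetilde{\varphi })$ rather than from $(\Phi ,\varphi )$, and you cannot then invoke independence of the system, since that independence is precisely the Corollary deduced \emph{from} this theorem. The standard repair --- and the place where the paper's explicit reliance on Lemma \ref{new-equinorm3}, which your proposal never uses, enters --- is to first insert a second Calder\'on-type identity adapted to $(\Phi ,\varphi )$, exactly as in the proof of Lemma \ref{new-equinorm3} (there one writes $\varphi _{v}=\omega _{v}\ast \Phi $, $\varphi _{0}=\eta _{1}\ast \Phi +\eta _{2}\ast \varphi _{1}$, etc.), so that $\widetilde{\varphi }_{v}\ast f$ becomes a finite sum of terms $\theta \ast \varphi _{j}\ast f$ with $|j-v|\leq 2$ (plus a $\Phi $-term for small $v$), and only then apply Lemma \ref{r-trick} to each piece; the resulting scales $j$ slightly below $v_{P}^{+}$ are exactly what the $\gamma $-shifted quasi-norm of Lemma \ref{new-equinorm3} is there to absorb. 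Alternatively one can carry both the $S_{\varphi }$ and $T_{\psi }$ bounds with the analyzing system tracked explicitly and deduce the system-independence simultaneously; but some such argument is mandatory, and as written your $S_{\varphi }$ step conceals rather than resolves this point.
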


From Theorem \ref{phi-tran}, we obtain the next important property of spaces 
$\mathfrak{B}_{p\left( \cdot \right) ,q\left( \cdot \right) }^{\alpha \left(
\cdot \right) ,\tau (\cdot )}$.

\begin{corollary}
Let $\alpha ,\mathbb{\tau }\in C_{\mathrm{loc}}^{\log }$, $\tau ^{-}>0$, $%
p,q\in \mathcal{P}_{0}^{\log }$ and $0<q^{+}<\infty $, The definition of the
spaces $\mathfrak{B}_{p\left( \cdot \right) ,q\left( \cdot \right) }^{\alpha
\left( \cdot \right) ,\tau (\cdot )}$ is independent of the choices of $\Phi 
$ and $\varphi $.
\end{corollary}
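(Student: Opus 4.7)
Let $(\Phi^{(1)},\varphi^{(1)})$ and $(\Phi^{(2)},\varphi^{(2)})$ be two admissible pairs, and denote by $\|\cdot\|_1$ and $\|\cdot\|_2$ the associated quasi-norms in \eqref{B-def}; the goal is to show $\|f\|_1\approx\|f\|_2$ for every $f\in\mathcal{S}'(\mathbb{R}^n)$. First I would invoke the Frazier--Jawerth construction recalled around \eqref{Ass4} to produce, for each $i\in\{1,2\}$, a dual pair $(\Psi^{(i)},\psi^{(i)})$ satisfying \eqref{Ass4} with $(\Phi^{(i)},\varphi^{(i)})$. This gives the distributional reconstruction identity $f=T_{\psi^{(i)}}S_{\varphi^{(i)}}f$ in $\mathcal{S}'(\mathbb{R}^n)$.

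Theorem \ref{phi-tran} applied to system $2$ yields $\|S_{\varphi^{(2)}}f\|_{\mathfrak{b}_{p(\cdot),q(\cdot)}^{\alpha(\cdot),\tau(\cdot)}}\lesssim\|f\|_2$. The crux of the argument is then to establish the \emph{mixed-system boundedness}: $T_{\psi^{(2)}}$ maps $\mathfrak{b}_{p(\cdot),q(\cdot)}^{\alpha(\cdot),\tau(\cdot)}$ boundedly into the Besov-type space equipped with the quasi-norm $\|\cdot\|_1$ (i.e.\ defined via $(\Phi^{(1)},\varphi^{(1)})$). Once this is available, the reconstruction gives
\begin{equation*}
\|f\|_1=\|T_{\psi^{(2)}}S_{\varphi^{(2)}}f\|_1\lesssim \|S_{\varphi^{(2)}}f\|_{\mathfrak{b}_{p(\cdot),q(\cdot)}^{\alpha(\cdot),\tau(\cdot)}}\lesssim\|f\|_2,
\end{equation*}
and interchanging the roles of $1$ and $2$ yields the reverse inequality.

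The only real obstacle is the mixed-system boundedness, which is obtained by re-examining the proof of Theorem \ref{phi-tran} with unmatched systems. Inspection of that proof (cf.\ \cite[Thm.~3.14]{D6}) shows that the identity \eqref{Ass4} linking $(\Phi^{(1)},\varphi^{(1)})$ to $(\Psi^{(1)},\psi^{(1)})$ is never actually used in the synthesis estimate; only the Fourier-support condition \eqref{Ass2} and the Schwartz decay of $\psi$ enter. Concretely, one writes
\begin{equation*}
\varphi^{(1)}_v\ast T_{\psi^{(2)}}\lambda=\sum_{v',m}\lambda_{v',m}\,\varphi^{(1)}_v\ast\psi^{(2)}_{v',m},
\end{equation*}
notes via Fourier support that $\varphi^{(1)}_v\ast\psi^{(2)}_{v',m}$ vanishes unless $|v-v'|\leq 1$ (with the usual modification at $v=0$ where $\varphi^{(1)}_0$ is replaced by $\Phi^{(1)}$), and bounds the surviving terms pointwise by $\lesssim 2^{v'n/2}\,\eta_{v,M}(\cdot-2^{-v'}m)$ for $M$ large. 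Plugging this into the definition of $\|\cdot\|_1$ and applying Lemmas \ref{Alm-Hastolemma1}, \ref{Key-lemma} and \ref{lamda-equi} exactly as in the proof of Theorem \ref{phi-tran} delivers the required estimate, and the corollary follows.
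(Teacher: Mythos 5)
Your proposal is correct and follows essentially the route the paper intends: the corollary is deduced from Theorem \ref{phi-tran} by the classical Frazier--Jawerth argument, i.e.\ using $f=T_{\psi^{(2)}}S_{\varphi^{(2)}}f$ together with the observation that the synthesis estimate for $T_{\psi}$ (via the overlap of dyadic Fourier supports, Lemmas \ref{Alm-Hastolemma1}, \ref{Key-lemma}, \ref{lamda-equi}) never uses the duality relation \eqref{Ass4} with the analysis system defining the target quasi-norm. The only nitpick is numerical: with supports as in \eqref{Ass2} the convolution $\varphi^{(1)}_v\ast\psi^{(2)}_{v'}$ vanishes for $|v-v'|\geq 3$ rather than $\geq 2$, which affects nothing in the argument.
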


\section{Embeddings}

For the spaces $\mathfrak{B}_{p(\cdot ),q(\cdot )}^{\alpha (\cdot ),\tau
(\cdot )}$ introduced above we want to show some embedding theorems. We say
a quasi-Banach space $A_{1}$ is continuously embedded in another
quasi-Banach space $A_{2}$, $A_{1}\hookrightarrow A_{2}$, if $A_{1}\subset
A_{2}$ and there is a $c>0$ such that $\left\Vert f\right\Vert _{A_{2}}\leq
c\left\Vert f\right\Vert _{A_{1}}$ for all $f\in A_{1}$. We begin with the
following elementary embeddings.

\begin{theorem}
\label{Elem-emb}Let $\alpha ,\mathbb{\tau }\in C_{\mathrm{loc}}^{\log }$, $%
\tau ^{-}>0$ and $p,q,q_{0},q_{1}\in \mathcal{P}_{0}^{\log }$ with $%
p^{+},q^{+},q_{0}^{+},q_{1}^{+}<\infty $.\newline
(i) If $q_{0}\leq q_{1}$, then 
\begin{equation*}
\mathfrak{B}_{p(\cdot ),q_{0}(\cdot )}^{\alpha (\cdot ),\tau (\cdot
)}\hookrightarrow \mathfrak{B}_{p(\cdot ),q_{1}(\cdot )}^{\alpha (\cdot
),\tau (\cdot )}.
\end{equation*}%
(ii) If $(\alpha _{0}-\alpha _{1})^{-}>0$, then 
\begin{equation*}
\mathfrak{B}_{{p(\cdot )},q_{0}{(\cdot )}}^{\alpha _{0}(\cdot ){,\tau (\cdot
)}}\hookrightarrow \mathfrak{B}_{{p(\cdot )},q_{1}{(\cdot )}}^{\alpha
_{1}(\cdot ){,\tau (\cdot )}}.
\end{equation*}
\end{theorem}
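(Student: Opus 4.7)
Both parts are local in the following sense: it suffices, for each dyadic cube $P\in\mathcal{Q}$, to bound the corresponding cube-term
\[
\Big\|\Big(\frac{2^{v\alpha(\cdot)}\varphi_v\ast f}{|P|^{\tau(\cdot)}}\chi_P\Big)_{v\ge v_P^+}\Big\|_{\ell^{q_1(\cdot)}(L^{p(\cdot)})}
\]
by the analogous quantity with $(\alpha_0,q_0)$ in place of $(\alpha,q_1)$, with a constant independent of $P$, and then take the supremum over $P$. Both statements are therefore really assertions about the mixed sequence space $\ell^{q(\cdot)}(L^{p(\cdot)})$.

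For (i), the plan is to prove the modular embedding $\|(g_v)_v\|_{\ell^{q_1(\cdot)}(L^{p(\cdot)})}\le \|(g_v)_v\|_{\ell^{q_0(\cdot)}(L^{p(\cdot)})}$ whenever $q_0\le q_1$ a.e. Normalise the right-hand side to be at most $1$. By the definition of the semi-modular in \eqref{mixed-norm} there exist $\mu_v>0$ with $\sum_v\mu_v\le 1$ and $\varrho_{p(\cdot)}\bigl(g_v/\mu_v^{1/q_0(\cdot)}\bigr)\le 1$; in particular $\mu_v\le 1$. Since $0<\mu_v\le 1$ and $q_0(x)\le q_1(x)$ imply $\mu_v^{1/q_1(x)}\ge \mu_v^{1/q_0(x)}$ pointwise, we have $|g_v|/\mu_v^{1/q_1(\cdot)}\le |g_v|/\mu_v^{1/q_0(\cdot)}$, hence $\varrho_{p(\cdot)}\bigl(g_v/\mu_v^{1/q_1(\cdot)}\bigr)\le 1$. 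Thus the infimum $\nu_v$ attached to the $q_1$-modular satisfies $\nu_v\le\mu_v$, $\sum_v\nu_v\le 1$, and the embedding follows with constant $1$.

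For (ii), set $\epsilon:=(\alpha_0-\alpha_1)^->0$, so that $2^{v\alpha_1(x)}|\varphi_v\ast f(x)|\le 2^{-v\epsilon}\,2^{v\alpha_0(x)}|\varphi_v\ast f(x)|$ for all $v\ge 0$ and $x\in\mathbb{R}^n$. Writing $F_v$ for the cube-sequence built from $\alpha_0$, the task reduces to
\[
\bigl\|(2^{-v\epsilon}F_v)_v\bigr\|_{\ell^{q_1(\cdot)}(L^{p(\cdot)})}\lesssim \bigl\|(F_v)_v\bigr\|_{\ell^{q_0(\cdot)}(L^{p(\cdot)})}.
\]
The idea is to sandwich the variable exponents between constants by means of (i): since $q_1^-\le q_1(\cdot)$, part (i) gives $\|\cdot\|_{\ell^{q_1(\cdot)}(L^{p(\cdot)})}\lesssim \|\cdot\|_{\ell^{q_1^-}(L^{p(\cdot)})}$, and since $q_0(\cdot)\le q_0^+$, part (i) gives $\|\cdot\|_{\ell^{q_0^+}(L^{p(\cdot)})}\lesssim \|\cdot\|_{\ell^{q_0(\cdot)}(L^{p(\cdot)})}$. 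It therefore suffices to prove the scalar inequality
\[
\Big(\sum_{v\ge 0}2^{-v\epsilon q_1^-}\|F_v\|_{p(\cdot)}^{q_1^-}\Big)^{1/q_1^-}\lesssim \Big(\sum_{v\ge 0}\|F_v\|_{p(\cdot)}^{q_0^+}\Big)^{1/q_0^+},
\]
which follows either from the classical nesting $\ell^{q_0^+}\hookrightarrow \ell^{q_1^-}$ (if $q_1^-\ge q_0^+$, the weight $2^{-v\epsilon q_1^-}$ being only helpful) or from H\"older's inequality with the conjugate pair $(q_0^+/q_1^-,(q_0^+/q_1^-)')$ (if $q_1^-<q_0^+$), absorbing the finite factor $\sum_v 2^{-v\epsilon q_1^- (q_0^+/q_1^-)'}$ into the constant.

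The step where care is genuinely required is (i): the semi-modular in \eqref{mixed-norm} is an \emph{infimum} rather than an integral, so the comparison $\nu_v\le\mu_v$ relies on the normalisation $\mu_v\le 1$ extracted from $\sum_v\mu_v\le 1$, which is the reason the argument produces a constant depending only on the unit-ball characterisation. Once (i) is in hand, (ii) reduces to a scalar $\ell^p$ argument after trading the variable indices for their essential bounds.
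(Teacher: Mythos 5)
Your proof is correct and follows essentially the method the paper itself invokes (that of \cite[Theorem 6.1]{AH}): for each fixed cube $P$ reduce to an embedding of the mixed sequence spaces $\ell^{q(\cdot)}(L^{p(\cdot)})$, prove (i) by the modular/unit-ball comparison with the normalisation $\mu_v\le 1$, and prove (ii) by the gain $2^{-v\epsilon}$ together with H\"older after sandwiching the variable indices between the constants $q_1^-$ and $q_0^+$. The one point you leave implicit, that the inner infima $\mu_v$ can indeed be taken $\le 1$, is routine since monotone convergence shows each inner infimum is attained, so no genuine gap remains.
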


The proof can be obtained by using the same method as in \cite[Theorem 6.1]%
{AH}. We next consider embeddings of Sobolev-type. It is well-known that%
\begin{equation*}
B_{{p}_{0},q}^{{\alpha }_{0}{,\tau }}\hookrightarrow B_{{p}_{1},q}^{{\alpha }%
_{1}{,\tau }},
\end{equation*}%
if ${\alpha }_{0}-\frac{n}{{p}_{0}}={\alpha }_{1}-\frac{n}{{p}_{1}}$, where $%
0<{p}_{0}<{p}_{1}\leq \infty ,0\leq {\tau }<\infty $ and $0<q\leq \infty $
(see e.g. \cite[Corollary 2.2]{WYY}). In the following theorem we generalize
these embeddings to variable exponent case.

\begin{theorem}
\label{Sobolev-emb}Let $\alpha _{0},\alpha _{1},\mathbb{\tau }\in C_{\mathrm{%
loc}}^{\log }$, $\tau ^{-}>0$ and $p_{0},p_{1},q\in \mathcal{P}_{0}^{\log }$
with $q^{+}<\infty $. If ${\alpha }_{0}(\cdot )>{\alpha }_{1}(\cdot )$\ and $%
{\alpha }_{0}(\cdot ){-}\frac{n}{p_{0}(\cdot )}={\alpha }_{1}(\cdot ){-}%
\frac{n}{p_{1}(\cdot )}$ with $\Big(\frac{p_{0}}{p_{1}}\Big)^{-}<1$, then 
\begin{equation*}
\mathfrak{B}_{{p}_{0}{(\cdot )},q{(\cdot )}}^{{\alpha }_{0}{(\cdot ),\tau
(\cdot )}}\hookrightarrow \mathfrak{B}_{{p}_{1}{(\cdot )},q{(\cdot )}}^{{%
\alpha }_{1}{(\cdot ),\tau (\cdot )}}.
\end{equation*}
\end{theorem}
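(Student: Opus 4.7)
The plan is to reduce the claimed embedding to the corresponding sequence-space embedding via the $\varphi$-transform characterization of Theorem \ref{phi-tran}. Since $S_{\varphi}$ and $T_{\psi}$ are bounded with $T_{\psi} \circ S_{\varphi} = \mathrm{id}$, it suffices to establish
\[
\mathfrak{b}_{p_0(\cdot),q(\cdot)}^{\alpha_0(\cdot),\tau(\cdot)} \hookrightarrow \mathfrak{b}_{p_1(\cdot),q(\cdot)}^{\alpha_1(\cdot),\tau(\cdot)}.
\]
By homogeneity I would normalize $\|\lambda\|_{\mathfrak{b}_{p_0(\cdot),q(\cdot)}^{\alpha_0(\cdot),\tau(\cdot)}} \leq 1$ and aim to bound $\|\lambda\|_{\mathfrak{b}_{p_1(\cdot),q(\cdot)}^{\alpha_1(\cdot),\tau(\cdot)}}$ by a constant depending only on the exponents.

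Fix a dyadic cube $P$ and $v \geq v_P^+$. For $x \in P$, let $m(x)$ denote the unique index with $x \in Q_{v,m(x)}$; by dyadic nesting $Q_{v,m(x)} \subseteq P$. Set
\[
G_v(x) := \frac{2^{v(\alpha_0(x)+n/2)}|\lambda_{v,m(x)}|}{|P|^{\tau(x)}}, \qquad H_v(x) := \frac{2^{v(\alpha_1(x)+n/2)}|\lambda_{v,m(x)}|}{|P|^{\tau(x)}}.
\]
The Sobolev relation $\alpha_0(\cdot)-n/p_0(\cdot) = \alpha_1(\cdot)-n/p_1(\cdot)$ gives the identity $H_v(x) = 2^{-vn(1/p_0(x) - 1/p_1(x))}\,G_v(x)$. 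The crux is to prove a pointwise Nikolskii-type inequality
\[
H_v(x)^{p_1(x)} \leq c\,G_v(x)^{p_0(x)}, \quad x \in P.
\]
For this, Lemma \ref{key-lemmasection3} applied at $y=x$ combined with the estimate $\|\chi_{v,m(x)}\|_{p_0(\cdot)} \approx 2^{-vn/p_0(x)}$ (which follows from \eqref{DHHR} and the log-H\"older regularity of $p_0$, valid since $v \geq 0$) and the inequality $|Q_{v,m(x)}|^{\tau(x)} \leq |P|^{\tau(x)}$ yield $G_v(x) \leq c\,2^{vn/p_0(x)}$. Factoring $H_v = G_v^{p_0/p_1} \cdot G_v^{1-p_0/p_1}\cdot 2^{-vn(1/p_0-1/p_1)}$ and inserting this bound into the middle factor, the powers of $2^v$ cancel exactly (this is the place where the Sobolev line condition enters algebraically), leaving $H_v(x) \leq c\,G_v(x)^{p_0(x)/p_1(x)}$, i.e.\ the displayed pointwise inequality.

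From the pointwise estimate one passes to a modular bound by integrating over $P$: $\varrho_{p_1(\cdot)}(H_v \chi_P) \leq c\,\varrho_{p_0(\cdot)}(G_v \chi_P)$ for each $v$. To lift this to the mixed norm, I would use the representation (valid since $q^+<\infty$)
\[
\varrho_{\ell^{q(\cdot)}(L^{p(\cdot)})}\bigl((f_v)_v\bigr) = \sum_{v} \bigl\||f_v|^{q(\cdot)}\bigr\|_{p(\cdot)/q(\cdot)},
\]
together with the norm-modular equivalence $\||F|^{q(\cdot)}\|_{p(\cdot)/q(\cdot)} \leq 1 \Leftrightarrow \int|F|^{p(x)}\,dx \leq 1$. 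This allows the pointwise modular bound to propagate through the $L^{p_1(\cdot)/q(\cdot)}$ quasi-norm and be summed against the assumed control on $\sum_v \||G_v|^{q(\cdot)}\chi_P\|_{p_0(\cdot)/q(\cdot)}$. Taking the supremum over dyadic cubes $P$ on both sides preserves the inequality and yields the claim.

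The main obstacle I anticipate is this last propagation step. Because both $p$ and $q$ are variable, the Luxemburg norm $\||f_v|^{q(\cdot)}\|_{p(\cdot)/q(\cdot)}$ does not scale as a simple power of $\|f_v\|_{p(\cdot)}$, and transferring the pointwise bound $H_v^{p_1(x)} \lesssim G_v^{p_0(x)}$ through the outer $\ell^{q(\cdot)}$ sum requires careful bookkeeping of modular-to-norm constants in terms of $(p_0/q)^{\pm}$ and $(p_1/q)^{\pm}$. The assumption $(p_0/p_1)^{-} < 1$ should be used here precisely to keep the exponent ratios under uniform control when performing this comparison, perhaps combined with an auxiliary scaling inside the $\ell^{q(\cdot)}$ modular to align the two summands before summing.
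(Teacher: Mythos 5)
Your reduction to the sequence spaces via Theorem \ref{phi-tran} is legitimate (its hypotheses $\tau^->0$, $0<q^+<\infty$ are satisfied), and your pointwise Nikolskii-type step is essentially sound: Lemma \ref{key-lemmasection3} together with $\|\chi_{v,m}\|_{p_0(\cdot)}\approx 2^{-vn/p_0(x)}$ and $|Q_{v,m}|^{\tau(x)}\le|P|^{\tau(x)}$ does give $G_v(x)\lesssim 2^{vn/p_0(x)}$, and the Sobolev line then yields $H_v(x)\lesssim G_v(x)^{p_0(x)/p_1(x)}$. But the step you defer to ``careful bookkeeping'' is not bookkeeping; it is the heart of the theorem, and as sketched it does not close. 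For variable $q$ the mixed quasi-norm is built from the quantities $\big\||f_v|^{q(\cdot)}\big\|_{p_i(\cdot)/q(\cdot)}$, and the only workable way to compare them level by level is the normalization used throughout the paper: set $\delta_v=\big\||G_v|^{q(\cdot)}\chi_P\big\|_{p_0(\cdot)/q(\cdot)}+2^{-v}$ and show $\int_P\big(\delta_v^{-1/q(x)}H_v(x)\big)^{p_1(x)}dx\lesssim 1$, then sum the $\delta_v$. Once the factor $\delta_v^{-1/q(x)}$ is inserted, your crude bound $G_v\lesssim 2^{vn/p_0(x)}$ is too weak: since $\delta_v$ can be as small as $2^{-v}$, splitting the exponent as $p_1=(p_1-p_0)+p_0$ leaves the factor $\big(\delta_v^{-1/q(x)}H_v(x)\big)^{p_1(x)-p_0(x)}$, and your estimate only gives $\delta_v^{-1/q(x)}H_v(x)\lesssim 2^{v(1/q(x)+n/p_1(x))}$ (with an additional uncontrolled $|P|^{-\tau(x)}$ when $l(P)<1$), which is unbounded in $v$. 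Controlling precisely this quantity is what the paper's proof spends its effort on: it works directly with $\varphi_v\ast f$, replaces the coefficient bound by local means $M_{3Q_v}(|\varphi_v\ast f|^d)$ via Lemma \ref{r-trick}, moves the variable exponents and $\delta^{-1/q(\cdot)}$ inside those averages with Lemmas \ref{DHR-lemma} and \ref{DHHR-estimate} together with Remark \ref{new-est}, and then exhausts the normalization at exponent $p_0$ through H\"older's inequality (with the auxiliary exponent $t(\cdot)$, $1=\frac{r}{p_0\tau}+\frac1t$) and the boundedness of $\mathcal{M}$, separately for the near part $3Q_v$ and the far parts $Q_v^k$. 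None of this machinery is reproduced or replaced in your proposal, so the decisive inequality is simply asserted.

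Two further points. First, your per-level modular inequality $\varrho_{p_1(\cdot)}(H_v\chi_P)\lesssim\varrho_{p_0(\cdot)}(G_v\chi_P)$ cannot be converted into the needed norm inequality by rescaling, because the two sides carry different exponents; this is exactly why the additive $2^{-v}$ in $\delta_v$ and the $\delta_v^{-1/q(\cdot)}$ insertion are unavoidable, and why the hypothesis $\big(\frac{p_0}{p_1}\big)^-<1$ alone does not rescue the argument. Second, writing $H_v^{p_1(x)}\le c^{\,p_1(x)-p_0(x)}G_v^{p_0(x)}$ loses uniformity when $p_1^+=\infty$ (which the theorem permits, since $p_1-p_0=p_0p_1(\alpha_0-\alpha_1)/n$ need not be bounded); the paper avoids this by arranging, via the $\beta$-factors of Lemma \ref{DHHR-estimate} and the definition of $\delta$, that the quantity raised to the power $p_1(x)-p_0(x)$ is bounded by $1$ rather than by a constant larger than $1$. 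So the overall strategy (discretize, then prove a sequence-space Sobolev embedding) is a genuinely different and in principle viable route, but as written it contains a real gap at the mixed-norm propagation step and a constant-tracking flaw for unbounded $p_1$.
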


\begin{proof}
Let $f\in \mathfrak{B}_{{p}_{0}{(\cdot )},q{(\cdot )}}^{{\alpha }_{0}{(\cdot
),\tau (\cdot )}}$ and $P$ be any dyadic cube of $\mathbb{R}^{n}$.

\textit{Case 1.} $l(P)>1$. Let $Q_{v}\subset P$ be a cube, with $\ell \left(
Q_{v}\right) =2^{-v}$ and $x\in Q_{v}\subset P$. By Lemma \ref{r-trick} we
have for any $m>n$, $d>0$ 
\begin{equation*}
|\varphi _{v}\ast f(x)|\leq c(\eta _{v,m}\ast |\varphi _{v}\ast f|^{d}(x))^{%
\frac{1}{d}}.
\end{equation*}%
We have%
\begin{eqnarray*}
&&\eta _{v,m}\ast |\varphi _{v}\ast f|^{d}(x) \\
&=&2^{vn}\int_{\mathbb{R}^{n}}\frac{\left\vert \varphi _{v}\ast
f(z)\right\vert ^{d}}{\left( 1+2^{v}\left\vert x-z\right\vert \right) ^{m}}dz
\\
&=&\int_{3Q_{v}}\cdot \cdot \cdot dz+\sum_{k\in \mathbb{Z}^{n},\left\Vert
k\right\Vert _{\infty }\geq 2}\int_{Q_{v}^{k}}\cdot \cdot \cdot dz,
\end{eqnarray*}%
where $Q_{v}^{k}=Q_{v}+kl(Q_{v})$. Let $z\in Q_{v}^{k}$ with $k\in \mathbb{Z}%
^{n}$ and $|k|>4\sqrt{n}$. Then $\left\vert x-z\right\vert \geq \left\vert
k\right\vert 2^{-v-1}$ and the second integral is bounded by 
\begin{equation*}
c\text{ }\left\vert k\right\vert ^{-m}M_{Q_{v}^{k}}\left( |\varphi _{v}\ast
f|^{d}\right) ,
\end{equation*}%
where the positive constant $c$ independent of $k$ and $v$. Fix 
\begin{equation*}
0<d\tau ^{+}<r<\frac{1}{2}\min (\frac{p^{-}}{d},\frac{q^{-}}{d},2,(p_{0}\tau
)^{-}),
\end{equation*}%
we have 
\begin{eqnarray}
&&\Big\|\Big(\frac{2^{v\alpha _{1}(\cdot )}\varphi _{v}\ast f}{\left\vert
P\right\vert ^{\tau (\cdot )}}\chi _{P}\Big)_{v\in \mathbb{N}_{0}}\Big\|%
_{\ell ^{q(\cdot )}(L^{p_{1}(\cdot )})}^{rd}  \notag \\
&\lesssim &\Big\|\Big(\frac{2^{v\alpha _{1}(\cdot )}}{\left\vert
P\right\vert ^{\tau (\cdot )}}\left( M_{3Q_{v}}\left( |\varphi _{v}\ast
f|^{d}\right) \right) ^{\frac{1}{d}}\chi _{P}\Big)_{v\in \mathbb{N}_{0}}%
\Big\|_{\ell ^{q(\cdot )}(L^{p_{1}(\cdot )})}^{rd}  \label{term1} \\
&&+\sum_{k\in \mathbb{Z}^{n},\left\Vert k\right\Vert _{\infty }\geq
2}\left\vert k\right\vert ^{\sigma rd}  \notag \\
&&\times \Big\|\Big(\frac{2^{v\alpha _{1}(\cdot )}\left\vert k\right\vert
^{b(\cdot )}}{\left\vert P\right\vert ^{\tau (\cdot )}}(M_{Q_{v}^{k}}\left(
|\varphi _{v}\ast f|^{d}\right) )^{\frac{1}{d}}\chi _{P}\Big)_{v\in \mathbb{N%
}_{0}}\Big\|_{\ell ^{q(\cdot )}(L^{p_{1}(\cdot )})}^{rd},
\label{second-norm1}
\end{eqnarray}%
where 
\begin{equation*}
b\left( \cdot \right) =-\frac{sn\tau (\cdot )}{r}-2c_{\log }(\frac{1}{q\tau }%
)\tau (\cdot )-c_{\log }\big(\alpha _{1}-\frac{n}{p_{1}}\big)-\frac{2n}{d}%
-nc_{\log }(\frac{n}{p_{0}})
\end{equation*}%
and 
\begin{equation*}
\sigma =\frac{sn\tau ^{+}}{r}+2c_{\log }(\frac{1}{q\tau })\tau ^{+}+c_{\log }%
\big(\alpha _{1}-\frac{n}{p_{1}}\big)+\frac{2n}{d}+nc_{\log }(\frac{n}{p_{0}}%
)-m,
\end{equation*}%
where $s$ will be choosen later.

\textit{Estimate of} \eqref{term1}. We will prove that \eqref{term1}, with
power $\frac{1}{rd}$, is bounded by 
\begin{equation}
c\Big\|\Big(\frac{2^{v\alpha _{1}(\cdot )}\varphi _{v}\ast f}{\left\vert
P\right\vert ^{\tau (\cdot )}}\chi _{3P}\Big)_{v\in \mathbb{N}_{0}}\Big\|%
_{\ell ^{q(\cdot )}(L^{p_{1}(\cdot )})}\lesssim \left\Vert f\right\Vert _{%
\mathfrak{B}_{{p}_{0}{(\cdot )},q{(\cdot )}}^{{\alpha }_{0}{(\cdot ),\tau
(\cdot )}}}.  \label{est-Bcase}
\end{equation}%
By the scaling argument, we see that it suffices to consider the case when
the left-hand side is less than or equal $1$. Therefore we will prove that%
\begin{equation*}
\sum\limits_{v=0}^{\infty }\Big\|\Big|\frac{c\text{ }2^{v\alpha _{1}(\cdot )}%
}{\left\vert P\right\vert ^{\tau (\cdot )}}\left( M_{3Q_{v}}|\varphi
_{v}\ast f|^{d}\right) ^{\frac{1}{d}}\chi _{P}\Big|^{q(\cdot )}\Big\|_{\frac{%
p_{1}(\cdot )}{q(\cdot )}}\lesssim 1
\end{equation*}%
for some positive constant $c>0$. This clearly follows from the inequality%
\begin{eqnarray*}
\Big\|\Big|\frac{c\text{ }2^{v\alpha _{1}(\cdot )}}{\left\vert P\right\vert
^{\tau (\cdot )}}(M_{3Q_{v}}\left( |\varphi _{v}\ast f|^{d}\right) )^{\frac{1%
}{d}}\chi _{P}\Big|^{q(\cdot )}\Big\|_{\frac{p_{1}(\cdot )}{q(\cdot )}}
&\leq &\Big\|\Big|\frac{2^{v\alpha _{0}(\cdot )}\varphi _{v}\ast f}{%
\left\vert P\right\vert ^{\tau (\cdot )}}\Big|^{q(\cdot )}\chi _{3P}\Big\|_{%
\frac{p_{0}(\cdot )}{q(\cdot )}}+2^{-v} \\
&=&\delta .
\end{eqnarray*}%
This claim can be reformulated as showing that%
\begin{equation*}
\Big\|\Big|\frac{c\text{ }\delta ^{-\frac{1}{q\left( \cdot \right) }%
}2^{v\alpha _{1}(\cdot )}}{\left\vert P\right\vert ^{\tau (\cdot )}}%
(M_{3Q_{v}}\left( |\varphi _{v}\ast f|^{d}\right) ^{\frac{1}{d}}\Big|%
^{q(\cdot )}\chi _{P}\Big\|_{\frac{p_{1}(\cdot )}{q\left( \cdot \right) }%
}\leq 1,
\end{equation*}%
which is equivalent to%
\begin{equation}
\int_{P}\frac{\delta ^{-\frac{p_{1}(x)}{q\left( x\right) }}2^{v\alpha
_{1}(x)p_{1}(x)}}{\left\vert P\right\vert ^{\tau (x)p_{1}(x)}}\big(%
M_{3Q_{v}}(|\varphi _{v}\ast f|^{d})\big)^{\frac{p_{1}(x)}{d}}dx\lesssim 1.
\label{est-Int}
\end{equation}%
Since $\alpha _{1}$ and $p_{1}$ are log-H\"{o}lder continuous, we can move $%
2^{v(\alpha _{1}(x)-\frac{n}{p_{1}(x)})}$ inside the integral by Lemma \ref%
{DHR-lemma}: 
\begin{equation}
\delta ^{-\frac{1}{q\left( x\right) }}\frac{2^{v(\alpha _{1}(x)-\frac{n}{%
p_{1}(x)})}}{\left\vert P\right\vert ^{\tau (x)}}\left( M_{3Q_{v}}\left(
|\varphi _{v}\ast f|^{d}\right) \right) ^{\frac{1}{d}}\lesssim \frac{\delta
^{-\frac{1}{q\left( x\right) }}}{\left\vert P\right\vert ^{\tau (x)}}\Big(%
M_{3Q_{v}}\big(2^{v(\alpha _{1}(\cdot )-\frac{n}{p_{1}(\cdot )})d}|\varphi
_{v}\ast f|^{d}\big)\Big)^{\frac{1}{d}}  \label{est3}
\end{equation}%
for any $x\in Q_{v}\subset P$. Observe that 
\begin{equation*}
0<d<\min \big(\frac{p^{-}}{2r},\frac{q^{-}}{2r},\frac{r}{\tau ^{+}}\big).
\end{equation*}%
The right-hand side of \eqref{est3} can be rewritten us%
\begin{equation}
\Big(\frac{1}{\left\vert P\right\vert ^{r}}\Big(\delta ^{-\frac{d}{q\left(
x\right) }}M_{3Q_{v}}\big(2^{v(\alpha _{1}(\cdot )-\frac{n}{p_{1}(\cdot )}%
)d}|\varphi _{v}\ast f|^{d}\big)\Big)^{\frac{r}{d\tau (x)}}\Big)^{\frac{\tau
(x)}{r}}.  \label{key-est-emb}
\end{equation}%
By Lemma \ref{DHHR-estimate}, Remark \ref{new-est} and since $\frac{1}{q}$
and $\tau $ are log-H\"{o}lder continuous,%
\begin{equation*}
\delta ^{-\frac{r}{q\left( x\right) \tau \left( x\right) }}\Big(\frac{\beta 
}{\left\vert 3Q_{v}\right\vert }\int_{3Q_{v}}2^{v(\alpha _{1}(y)-\frac{n}{%
p_{1}(y)})d}|\varphi _{v}\ast f(y)|^{d}dy\Big)^{\frac{r}{d\tau (x)}}
\end{equation*}%
can be estimated by 
\begin{eqnarray*}
&&\frac{c}{\left\vert 3Q_{v}\right\vert }\int_{3Q_{v}}\delta ^{-\frac{r}{%
q\left( y\right) \tau \left( y\right) }}2^{\frac{vr(\alpha _{1}(y)-\frac{n}{%
p_{1}(y)})}{\tau \left( y\right) }}|\varphi _{v}\ast f(y)|^{\frac{r}{\tau
\left( y\right) }}dy+\left\vert Q_{v}\right\vert ^{s}g(x) \\
&\lesssim &\int_{3Q_{v}}2^{vn}\delta ^{-\frac{r}{q\left( y\right) \tau
\left( y\right) }}2^{\frac{v(\alpha _{1}(y)-\frac{n}{p_{1}(y)})r}{\tau
\left( y\right) }}|\varphi _{v}\ast f(y)|^{\frac{r}{\tau \left( y\right) }%
}dy+h(x)
\end{eqnarray*}%
for any $s>0$ large enough where%
\begin{equation*}
g(x)=\left( e+\left\vert x\right\vert \right) ^{-s}+M_{3Q_{v}}\left( \left(
e+\left\vert \cdot \right\vert \right) ^{-s}\right) ,\quad x\in \mathbb{R}%
^{n},s>0
\end{equation*}%
and%
\begin{equation*}
h(x)=\left( e+\left\vert x\right\vert \right) ^{-s}+\mathcal{M}\left( \left(
e+\left\vert \cdot \right\vert \right) ^{-s}\right) \left( x\right) ,\quad
x\in \mathbb{R}^{n},s>0.
\end{equation*}%
These two functions will be used throughout the paper. Therefore %
\eqref{key-est-emb}, with power $\frac{1}{\tau (x)}$, is bounded by%
\begin{equation*}
\Big\|\frac{\delta ^{-\frac{r}{q\left( \cdot \right) \tau \left( \cdot
\right) }}2^{v\frac{\alpha _{0}(\cdot )r}{\tau \left( \cdot \right) }%
}|\varphi _{v}\ast f|^{\frac{r}{\tau \left( \cdot \right) }}}{\left\vert
P\right\vert ^{r}}\chi _{3P}\Big\|_{\frac{p_{0}(\cdot )\tau \left( \cdot
\right) }{r}}^{\frac{1}{r}}\big\|2^{v\frac{n}{t(\cdot )}}\chi _{3Q_{v}}\big\|%
_{t(\cdot )}^{\frac{1}{r}}+c,
\end{equation*}%
by H\"{o}lder's inequality, with $1=\frac{r}{p_{0}(\cdot )\tau \left( \cdot
\right) }+\frac{1}{t(\cdot )}$. The second norm is bounded and the first
norm is bounded if and only if%
\begin{equation*}
\Big\|\frac{\delta ^{-\frac{1}{q\left( \cdot \right) }}2^{v\alpha _{0}(\cdot
)}\left\vert \varphi _{v}\ast f\right\vert \chi _{3P}}{|P|^{\tau (\cdot )}}%
\Big\|_{p_{0}(\cdot )}\lesssim 1,
\end{equation*}%
which follows immediately from the definition of $\delta $.\ Now, we find
that the left-hand side of \eqref{est-Int} can be rewritten as 
\begin{eqnarray*}
&&\int_{P}\Big(\frac{\delta ^{-\frac{1}{q\left( x\right) }}2^{v(\alpha
_{1}(x)-\frac{n}{p_{1}(x)})}}{\left\vert P\right\vert ^{\tau (x)}}%
(M_{3Q_{v}}(|\varphi _{v}\ast f|^{d}))^{\frac{1}{d}}\Big)^{p_{1}(x)-p_{0}(x)}
\\
&&\times \Big(\frac{\delta ^{-\frac{1}{q\left( x\right) }}2^{v(\alpha
_{1}(x)+\frac{n}{p_{0}(x)}-\frac{n}{p_{1}(x)})}}{\left\vert P\right\vert
^{\tau (x)}}(M_{3Q_{v}}(|\varphi _{v}\ast f|^{d}))^{\frac{1}{d}}\Big)%
^{p_{0}(x)}dx \\
&\lesssim &\int_{P}\frac{1}{\left\vert P\right\vert ^{\tau (x)p_{0}(x)}}\Big(%
\delta ^{-\frac{d}{q\left( x\right) }}M_{3Q_{v}}\left( 2^{v\alpha _{0}(\cdot
)d}|\varphi _{v}\ast f|^{d}\right) \Big)^{\frac{p_{0}(x)}{d}}dx.
\end{eqnarray*}%
The last expression is bounded if and only if%
\begin{equation*}
\Big\|\frac{1}{\left\vert P\right\vert ^{r}}\Big(\delta ^{-\frac{d}{q\left(
\cdot \right) }}M_{3Q_{v}}\left( 2^{v\alpha _{0}(\cdot )d}|\varphi _{v}\ast
f|^{d}\right) \chi _{P}\Big)^{\frac{r}{d\tau \left( \cdot \right) }}\Big\|_{%
\frac{p_{0}(\cdot )\tau \left( \cdot \right) }{r}}\lesssim 1.
\end{equation*}%
This norm is bounded by%
\begin{equation*}
\Big\|\mathcal{M}\Big(\frac{\delta ^{-\frac{r}{q\left( \cdot \right) \tau
\left( \cdot \right) }}2^{\frac{v\alpha _{0}(\cdot )r}{\tau \left( \cdot
\right) }}|\varphi _{v}\ast f|^{\frac{r}{\tau \left( \cdot \right) }}}{%
\left\vert P\right\vert ^{r}}\chi _{3P}\Big)\Big\|_{\frac{p_{0}(\cdot )\tau
\left( \cdot \right) }{r}}+c,
\end{equation*}%
where we have used again Lemma \ref{DHHR-estimate} and Remark \ref{new-est}.
Since the maximal function is bounded in $L^{p(\cdot )}$ when $p\in \mathcal{%
P}^{\log }$ and $p^{-}>1$, this expression is bounded by%
\begin{equation*}
\Big\|\frac{\delta ^{-\frac{1}{q\left( \cdot \right) \tau \left( \cdot
\right) }}2^{\frac{v\alpha _{0}(\cdot )}{\tau \left( \cdot \right) }%
}|\varphi _{v}\ast f|^{\frac{1}{\tau \left( \cdot \right) }}\chi _{3P}}{%
\left\vert P\right\vert }\Big\|_{p_{0}(\cdot )\tau \left( \cdot \right)
}^{r}+c.
\end{equation*}%
The last quasi-norm is bounded if and only if%
\begin{equation*}
\Big\|\frac{\delta ^{-\frac{1}{q\left( \cdot \right) }}2^{v\alpha _{0}(\cdot
)}|\varphi _{v}\ast f|\chi _{3P}}{\left\vert P\right\vert ^{\tau \left(
\cdot \right) }}\Big\|_{p_{0}(\cdot )}\lesssim 1.
\end{equation*}%
due to the choice of $\delta $.

\textit{Estimate of} \eqref{second-norm1}. The summation in %
\eqref{second-norm1} can be estimated by%
\begin{equation*}
\sum_{k\in \mathbb{Z},|k|\leq 4\sqrt{n}}\cdot \cdot \cdot +\sum_{k\in 
\mathbb{Z}^{n},|k|>4\sqrt{n}}\cdot \cdot \cdot .
\end{equation*}%
The estimation of the first sum follows in the same manner as before. Let us
prove that 
\begin{equation*}
\Big\|\Big(\frac{2^{v\alpha _{1}(\cdot )}\left\vert k\right\vert ^{b\left(
\cdot \right) }(M_{Q_{v}^{k}}\left( |\varphi _{v}\ast f|^{d}\right) )^{\frac{%
1}{d}}}{|\tilde{Q}^{k}|^{\tau (\cdot )}}\chi _{P}\Big)_{v\in \mathbb{N}_{0}}%
\Big\|_{\ell ^{q(\cdot )}(L^{p_{1}(\cdot )})}\lesssim \big\|f\big\|_{%
\mathfrak{B}_{{p}_{0}{(\cdot )},q{(\cdot )}}^{{\alpha }_{0}{(\cdot ),\tau
(\cdot )}}}
\end{equation*}%
for any $k\in \mathbb{Z}^{n}$ with $|k|>4\sqrt{n}$, where $\tilde{Q}%
^{k}=Q\left( c_{P},2\left\vert k\right\vert l(P)\right) $. By the scaling
argument, we see that it suffices to consider the case when the left-hand
side is less than or equal $1$. Therefore we will prove that%
\begin{equation*}
\sum\limits_{v=0}^{\infty }\Big\|\Big|\frac{2^{v\alpha _{1}(\cdot
)}\left\vert k\right\vert ^{b\left( \cdot \right) -n\tau (\cdot )}}{%
\left\vert P\right\vert ^{\tau (\cdot )}}(M_{Q_{v}^{k}}\left( |\varphi
_{v}\ast f|^{d}\right) )^{\frac{1}{d}}\chi _{P}\Big|^{q(\cdot )}\Big\|_{%
\frac{p_{1}(\cdot )}{q(\cdot )}}\lesssim 1.
\end{equation*}%
This clearly follows from the inequality%
\begin{eqnarray*}
&&\Big\|\Big|\frac{c\text{ }2^{v\alpha _{1}(\cdot )}\left\vert k\right\vert
^{b\left( \cdot \right) -n\tau (\cdot )}}{\left\vert P\right\vert ^{\tau
(\cdot )}}(M_{Q_{v}^{k}}\left( |\varphi _{v}\ast f|^{d}\right) )^{\frac{1}{d}%
}\chi _{P}\Big|^{q(\cdot )}\Big\|_{\frac{p_{1}(\cdot )}{q(\cdot )}} \\
&\leq &\Big\|\Big|\frac{2^{v\alpha _{0}(\cdot )}\varphi _{v}\ast f}{|\tilde{Q%
}^{k}|^{\tau (\cdot )}}\Big|^{q(\cdot )}\chi _{\tilde{Q}^{k}}\Big\|_{\frac{%
p_{0}(\cdot )}{q(\cdot )}}+2^{-v} \\
&=&\delta 
\end{eqnarray*}%
for some positive constant $c$. This claim can be reformulated as showing
that%
\begin{equation*}
\int_{P}\frac{\delta ^{-\frac{p_{1}(x)}{q\left( x\right) }}2^{v\alpha
_{1}(x)p_{1}(x)}\left\vert k\right\vert ^{(b\left( x\right) -n\tau
(x))p_{1}(x)}}{\left\vert P\right\vert ^{\tau (x)p_{1}(x)}}%
(M_{Q_{v}^{k}}\left( |\varphi _{v}\ast f|^{d}\right) )^{\frac{p_{1}(x)}{d}%
}dx\lesssim 1.
\end{equation*}%
Since, again, $\alpha _{1}$ and $p_{1}$ are log-H\"{o}lder continuous, we
can move $2^{v(\alpha _{1}(x)-\frac{n}{p_{1}(x)})}$ inside the integral by
Lemma \ref{DHR-lemma}: 
\begin{eqnarray*}
&&\text{ }\frac{\left\vert k\right\vert ^{-c_{\log }(\alpha _{1}-\frac{n}{%
p_{1}})-\frac{n}{d}}2^{v(\alpha _{1}(x)-\frac{n}{p_{1}(x)})}}{\left\vert
P\right\vert ^{\tau (x)}}(M_{Q_{v}^{k}}\left( |\varphi _{v}\ast
f|^{d}\right) )^{\frac{1}{d}} \\
&\lesssim &\frac{1}{\left\vert P\right\vert ^{\tau (x)}}\Big(M_{Q_{v}^{k}}%
\big(\left\vert k\right\vert ^{-n}2^{v(\alpha _{1}(\cdot )-\frac{n}{%
p_{1}(\cdot )})d}|\varphi _{v}\ast f|^{d}\big)\Big)^{\frac{1}{d}},
\end{eqnarray*}%
where the implicit constant is independnet of $x,v$ and $k$. We have%
\begin{eqnarray}
&&\frac{\left\vert k\right\vert ^{b\left( \cdot \right) -n\tau (\cdot
)}\delta ^{-\frac{1}{q\left( \cdot \right) }}}{\left\vert P\right\vert
^{\tau (\cdot )}}\Big(M_{Q_{v}^{k}}\big(\frac{|\varphi _{v}\ast f|^{d}}{%
\left\vert k\right\vert ^{n}2^{-v(\alpha _{1}(\cdot )-\frac{n}{p_{1}(\cdot )}%
)d}}\big)\Big)^{\frac{1}{d}}  \notag \\
&=&\Big(\frac{\left\vert k\right\vert ^{(b\left( \cdot \right) -n\tau (\cdot
))\frac{r}{\tau (\cdot )}}\delta ^{-\frac{r}{q\left( \cdot \right) \tau
(\cdot )}}}{\left\vert P\right\vert ^{r}}\Big(M_{Q_{v}^{k}}\big(\frac{%
|\varphi _{v}\ast f|^{d}}{\left\vert k\right\vert ^{n}2^{-v(\alpha
_{1}(\cdot )-\frac{n}{p_{1}(\cdot )})d}}\big)\Big)^{\frac{r}{d\tau (\cdot )}}%
\Big)^{\frac{\tau (\cdot )}{r}}.  \label{key-est-emb1}
\end{eqnarray}%
As before, let us prove that this expression, with power $\frac{1}{\tau (x)}$
is bounded. Observe that $Q_{v}^{k}\subset Q(x,\left\vert k\right\vert
2^{-v+1})=\tilde{Q}_{v}^{k}$. We have%
\begin{equation*}
\delta ^{-\frac{1}{q\left( x\right) \tau (x)}}=(2^{v}\delta )^{-\frac{1}{%
q\left( x\right) \tau (x)}+\frac{1}{q\left( y\right) \tau (y)}}(2^{v}\delta
)^{-\frac{1}{q\left( y\right) \tau (y)}}2^{v\frac{1}{q\left( x\right) \tau
(x)}},\quad x\in Q_{v}\subset P,y\in \tilde{Q}_{v}^{k}.
\end{equation*}%
From Lemma \ref{DHR-lemma} it follows that%
\begin{equation*}
2^{v\frac{1}{q\left( x\right) \tau (x)}}\lesssim |k|^{c_{\log }(\frac{1}{%
q\tau })}2^{v\frac{1}{q\left( y\right) \tau (y)}}
\end{equation*}

and%
\begin{equation*}
(2^{v}\delta )^{-\frac{1}{q\left( x\right) \tau (x)}+\frac{1}{q\left(
y\right) \tau (y)}}\lesssim |k|^{c_{\log }(\frac{1}{q\tau })}
\end{equation*}%
for any $x\in Q_{v},y\in \tilde{Q}_{v}^{k}$, where the implicit constant is
independent of $x,y,k$ and $v$. Again by Lemma \ref{DHHR-estimate}\ combined
with Remark \ref{new-est} and since $\frac{1}{q}$ and $\tau $ are log-H\"{o}%
lder continuous,%
\begin{eqnarray*}
&&\left\vert k\right\vert ^{\big(-\frac{sn\tau (x)}{r}-2c_{\log }(\frac{1}{%
q\tau })\tau (x)\big)\frac{r}{\tau (x)}}\delta ^{-\frac{r}{q\left( x\right)
\tau \left( x\right) }} \\
&&\times \Big(\frac{\beta }{|\tilde{Q}_{v}^{k}|}\int_{\tilde{Q}%
_{v}^{k}}2^{v(\alpha _{1}(y)-\frac{n}{p_{1}(y)})d}|\varphi _{v}\ast f\left(
y\right) |^{d}dy\Big)^{\frac{r}{d\tau \left( x\right) }} \\
&\lesssim &\frac{1}{|\tilde{Q}_{v}^{k}|}\int_{\tilde{Q}_{v}^{k}}\delta ^{-%
\frac{r}{q\left( y\right) \tau \left( y\right) }}2^{\frac{vr(\alpha _{1}(y)-%
\frac{n}{p_{1}(y)})}{\tau \left( y\right) }}|\varphi _{v}\ast f\left(
y\right) |^{\frac{r}{\tau \left( y\right) }}dy \\
&&+\left( e+\left\vert x\right\vert \right) ^{-s}+\frac{1}{|\tilde{Q}%
_{v}^{k}|}\int_{\tilde{Q}_{v}^{k}}\left( e+\left\vert y\right\vert \right)
^{-s}dy \\
&\lesssim &\frac{1}{|\tilde{Q}_{v}^{k}|}\int_{\tilde{Q}_{v}^{k}}\delta ^{-%
\frac{r}{q\left( y\right) \tau \left( y\right) }}2^{\frac{v(\alpha _{1}(y)-%
\frac{n}{p_{1}(y)})r}{\tau \left( y\right) }}|\varphi _{v}\ast f\left(
y\right) |^{\frac{r}{\tau \left( y\right) }}dy+h(x)
\end{eqnarray*}%
for any $s>0$ large enough. Therefore the left-hand side of %
\eqref{key-est-emb1}, with power $\frac{1}{\tau (x)}$, is bounded by%
\begin{equation*}
\Big\|\frac{\left\vert k\right\vert ^{-nr}\delta ^{-\frac{r}{q\left( \cdot
\right) \tau \left( \cdot \right) }}2^{\frac{v\alpha _{0}(\cdot )r}{\tau
\left( \cdot \right) }}|\varphi _{v}\ast f|^{\frac{r}{\tau \left( \cdot
\right) }}}{\left\vert P\right\vert ^{r}}\chi _{\tilde{Q}^{k}}\Big\|_{\frac{%
p_{0}(\cdot )\tau \left( \cdot \right) }{r}}^{\frac{1}{r}}\big\|%
2^{vn/t(\cdot )}\chi _{\tilde{Q}_{v}^{k}}\big\|_{t(\cdot )}^{\frac{1}{r}}+c,
\end{equation*}%
by H\"{o}lder's inequality, with $1=\frac{r}{p_{0}(\cdot )\tau \left( \cdot
\right) }+\frac{1}{t(\cdot )}$. As before the second norm is bounded and the
first norm is bounded if and only if%
\begin{equation*}
\int_{\tilde{Q}^{k}}\frac{\delta ^{-\frac{p_{0}(y)}{q\left( y\right) }%
}2^{v\alpha _{0}(y)p_{0}(y)}|\varphi _{v}\ast f\left( y\right) |^{p_{0}(y)}}{%
|\tilde{Q}^{k}|^{p_{0}(\cdot )\tau \left( y\right) }}dy\lesssim 1,
\end{equation*}%
which follows immediately from the definition of $\delta $.\ The desired
estimate, follows using similar arguments as above and by taking $m$ large
enough.

\textit{Case 2.} $l(P)\leq 1$. {Since }$\tau ${\ is log-H\"{o}lder
continuous, we have }%
\begin{equation*}
\left\vert P\right\vert ^{-\tau (x)}\leq c\left\vert P\right\vert ^{-\tau
(y)}(1+2^{v_{P}}\left\vert x-y\right\vert )^{c_{\log }\left( \tau \right)
}\leq c\left\vert P\right\vert ^{-\tau (y)}(1+2^{v}\left\vert x-y\right\vert
)^{c_{\log }\left( \tau \right) }
\end{equation*}%
for any $x,y\in \mathbb{R}^{n}$ and any $v\geq v_{P}$. Therefore,%
\begin{equation*}
\frac{1}{\left\vert P\right\vert ^{\tau (\cdot )d}}\eta _{v,m}\ast \left(
|\varphi _{v}\ast f|^{d}\chi _{3Q_{v}}\right) \lesssim \eta _{v,m-c_{\log
}\left( \tau \right) }\ast \Big(\frac{|\varphi _{v}\ast f|^{d}\chi _{3Q_{v}}%
}{\left\vert P\right\vert ^{\tau (\cdot )d}}\Big)
\end{equation*}%
and%
\begin{equation*}
\frac{1}{\left\vert P\right\vert ^{\tau (\cdot )d}}\eta _{v,m}\ast \left(
|\varphi _{v}\ast f|^{d}\chi _{Q_{v}^{k}}\right) \lesssim \eta _{v,m-c_{\log
}\left( \tau \right) }\ast \Big(\frac{|\varphi _{v}\ast f|^{d}\chi
_{Q_{v}^{k}}}{\left\vert P\right\vert ^{\tau (\cdot )d}}\Big).
\end{equation*}%
The arguments here are quite similar to those used in the case $l(P)>1$,
where we did not need to use Theorem \ref{DHHR-estimate}, which could be
used only to move $\left\vert P\right\vert ^{\tau (\cdot )}$ inside the
convolution and hence the proof is complete.
\end{proof}

\begin{remark}
$\mathrm{We}$ $\mathrm{would}$ $\mathrm{like}$ $\mathrm{to}$ $\mathrm{mention%
}$ $\mathrm{that}$ $\mathrm{similar}$ $\mathrm{arguments}$ $\mathrm{give}$%
\begin{equation*}
\mathfrak{B}_{{p}_{0}{(\cdot )},q{(\cdot )}}^{{\alpha }_{0}{(\cdot ),\tau
(\cdot )}}\hookrightarrow \mathfrak{B}_{{\infty },q{(\cdot )}}^{{\alpha }_{0}%
{(\cdot )-}\frac{n}{{p}_{0}{(\cdot )}}{,\tau (\cdot )}}
\end{equation*}%
$\mathrm{if}$ $\alpha _{0},\mathbb{\tau }\in C_{\mathrm{loc}}^{\log },\tau
^{-}>0$ $\mathrm{and}$ $p_{0},q,\tau \in \mathcal{P}_{0}^{\log }$, with $%
q^{+}<\infty $.
\end{remark}

Let $\alpha ,\mathbb{\tau }\in C_{\mathrm{loc}}^{\log },\tau ^{-}>0,p,q\in 
\mathcal{P}_{0}^{\log }$. From \eqref{emd}, we obtain%
\begin{equation*}
\mathfrak{B}_{p(\cdot ),q(\cdot )}^{\alpha (\cdot ),\tau (\cdot
)}\hookrightarrow B_{p(\cdot ),\infty }^{\alpha (\cdot )+n\tau (\cdot )-%
\frac{n}{p(\cdot )}}\hookrightarrow \mathcal{S}^{\prime }(\mathbb{R}^{n}).
\end{equation*}%
Similar arguments of \cite[Proposition 2.3]{WYY} can be used to prove that%
\begin{equation*}
\mathcal{S}(\mathbb{R}^{n})\hookrightarrow \mathfrak{B}_{p(\cdot ),q(\cdot
)}^{\alpha (\cdot ),\tau (\cdot )}.
\end{equation*}%
Therefore, we obtain the following statement.

\begin{theorem}
Let $\alpha ,\mathbb{\tau }\in C_{\mathrm{loc}}^{\log },\tau ^{-}>0$ and $%
p,q\in \mathcal{P}_{0}^{\log }$ with $q^{+}<\infty $. Then 
\begin{equation*}
\mathcal{S}(\mathbb{R}^{n})\hookrightarrow \mathfrak{B}_{p(\cdot ),q(\cdot
)}^{\alpha (\cdot ),\tau (\cdot )}\hookrightarrow \mathcal{S}^{\prime }(%
\mathbb{R}^{n}).
\end{equation*}
\end{theorem}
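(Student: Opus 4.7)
The theorem packages two continuous embeddings, and both are essentially prepared by the material stated immediately above the theorem. I would treat them separately.

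For the right embedding $\mathfrak{B}_{p(\cdot ),q(\cdot )}^{\alpha (\cdot ),\tau (\cdot )}\hookrightarrow \mathcal{S}^{\prime }(\mathbb{R}^{n})$, I would rely on the pointwise bound \eqref{emd} of Remark \ref{new-est}, together with the chain noted just above the theorem: $\mathfrak{B}_{p(\cdot ),q(\cdot )}^{\alpha (\cdot ),\tau (\cdot )}\hookrightarrow B_{p(\cdot ),\infty }^{\alpha (\cdot )+n\tau (\cdot )-n/p(\cdot )}$, combined with the known embedding of the latter variable Besov space into $\mathcal{S}^{\prime}$ from \cite{AH}. Alternatively, one can proceed directly: given $\phi \in \mathcal{S}(\mathbb{R}^{n})$, expand it by the Calder\'on-type identity \eqref{Ass4} as $\phi=\Psi \ast \widetilde{\Phi}\ast \phi+\sum_{v\ge 1}\psi _{v}\ast \widetilde{\varphi}_{v}\ast \phi$, pair with $f$, and estimate each $|\langle f,\psi_v\ast\widetilde\varphi_v\ast\phi\rangle|$ by \eqref{emd} times the Schwartz seminorms of $\widetilde{\varphi }_{v}\ast \phi$, which decay rapidly in $v$ by the moment conditions on $\varphi$.

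For the left embedding $\mathcal{S}(\mathbb{R}^{n})\hookrightarrow \mathfrak{B}_{p(\cdot ),q(\cdot )}^{\alpha (\cdot ),\tau (\cdot )}$ I would follow the pattern of \cite[Proposition 2.3]{WYY}. For $f\in \mathcal{S}(\mathbb{R}^{n})$, the moment conditions implied by $\mathrm{supp}\,\mathcal{F}\varphi \subset \overline{B(0,2)}\setminus B(0,1/2)$ together with the Schwartz decay of $\widehat{f}$ yield, for arbitrary $N,M>0$,
\begin{equation*}
|\Phi \ast f(x)|\le C_{N}(1+|x|)^{-N},\qquad |\varphi _{v}\ast f(x)|\le C_{N,M}\,2^{-vM}(1+|x|)^{-N},\quad v\in \mathbb{N}.
\end{equation*}
Since $C_{\mathrm{loc}}^{\log }\subset L^{\infty }$ (noted in the paper), $\alpha$ and $\tau$ are bounded, so I may fix $M>\alpha ^{+}+n\tau ^{+}+1$. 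I then split the supremum in \eqref{B-def} into two cases. When $l(P)\le 1$, one has $v_{P}^{+}=v_{P}\ge 0$ and $|P|^{-\tau (x)}\le 2^{v_{P}n\tau ^{+}}$; thus $2^{v\alpha (x)-vM}|P|^{-\tau (x)}$ is controlled by $2^{-(v-v_{P})(M-\alpha^+-n\tau^+)}$, which is summable as a geometric series in $v\ge v_{P}$ for the $\ell ^{q(\cdot )}$ quasi-norm. When $l(P)>1$, one has $v_{P}^{+}=0$ and $|P|^{-\tau (x)}\le 1$, and the sum over $v\ge 0$ is even simpler. In either case the estimate reduces to showing that $\|(1+|\cdot|)^{-N}\chi _{P}\|_{p(\cdot )}$ is uniformly bounded in $P\in\mathcal Q$, which is immediate for $N$ large enough.

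The only delicate point is making the geometric decay in $v$ uniform in $P$, which forces $M$ to dominate both $\alpha^+$ and $n\tau^+$; this is why one uses boundedness (rather than merely local log-H\"older regularity) of $\alpha$ and $\tau$. The log-H\"older hypothesis itself plays no active role for this particular theorem; it is retained only because it is the standing assumption of the section.
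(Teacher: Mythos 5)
Your proposal is correct and follows essentially the same route as the paper, which obtains the right embedding from the estimate \eqref{emd} via the chain $\mathfrak{B}_{p(\cdot ),q(\cdot )}^{\alpha (\cdot ),\tau (\cdot )}\hookrightarrow B_{p(\cdot ),\infty }^{\alpha (\cdot )+n\tau (\cdot )-\frac{n}{p(\cdot )}}\hookrightarrow \mathcal{S}^{\prime }(\mathbb{R}^{n})$ and the left embedding by the same \cite[Proposition 2.3]{WYY}-type decay-and-case-splitting argument you spell out. Only your closing aside is slightly off: the log-H\"older assumptions are not entirely inactive here, since \eqref{emd} (Remark \ref{new-est}), on which the right embedding rests, is itself derived from the coincidence with $B_{\infty ,\infty }^{\alpha (\cdot )+n(\tau (\cdot )-\frac{1}{p(\cdot )})}$ whose proof uses them.
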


Now we establish some further embedding of the spaces $\mathfrak{B}_{p(\cdot
),q(\cdot )}^{\alpha (\cdot ),\tau (\cdot )}$.

\begin{theorem}
\textit{Let }$\alpha ,\tau \in C_{\mathrm{loc}}^{\log },\tau ^{-}>0$ \textit{%
and }$p,q\in \mathcal{P}_{0}^{\log }$ with $q^{+}<\infty $\textit{.} If $%
(p_{2}-p_{1})^{+}\leq 0$, then%
\begin{equation*}
\mathfrak{B}_{{p}_{2}{(\cdot )},q{(\cdot )}}^{{\alpha (\cdot )+n\tau (\cdot
)+\frac{n}{{p}_{2}{(\cdot )}}-\frac{n}{{p}_{1}{(\cdot )}}}}\hookrightarrow 
\mathfrak{B}_{{p}_{1}{(\cdot )},q{(\cdot )}}^{{\alpha (\cdot ),\tau (\cdot )}%
}.
\end{equation*}
\end{theorem}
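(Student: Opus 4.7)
The plan is to factor the claimed embedding through the variable Besov space $B^{\alpha(\cdot)+n\tau(\cdot)}_{p_1(\cdot),q(\cdot)}$ at the target integrability $p_1(\cdot)$. Writing $\widetilde\alpha := \alpha + n\tau + n/p_2 - n/p_1$, and using the convention $\mathfrak{B}^{\alpha,0}_{p,q} = B^{\alpha}_{p,q}$ recorded in Section 3, one reads the source as the variable Besov space $B^{\widetilde\alpha(\cdot)}_{p_2(\cdot),q(\cdot)}$, and the chain to build is
\[
B^{\widetilde\alpha(\cdot)}_{p_2(\cdot),q(\cdot)}
\;\hookrightarrow\; B^{\alpha(\cdot)+n\tau(\cdot)}_{p_1(\cdot),q(\cdot)}
\;\hookrightarrow\; \mathfrak{B}^{\alpha(\cdot),\tau(\cdot)}_{p_1(\cdot),q(\cdot)}.
\]

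For the first inclusion I would invoke the Sobolev-type embedding for variable-exponent Besov spaces of Almeida and H\"ast\"o \cite{AH}; its hypotheses are immediate because the scaling identity $\widetilde\alpha - n/p_2 = (\alpha+n\tau) - n/p_1$ is the definition of $\widetilde\alpha$, and the assumption $(p_2-p_1)^+ \le 0$ gives $p_2 \le p_1$ (with strict inequality on a set of positive measure in the non-trivial case, forcing $\widetilde\alpha > \alpha + n\tau$ there). For the second inclusion, a direct pointwise comparison suffices: for any dyadic cube $P$, any $x\in P$, and any $v\ge v_P^+$, the constraint $|P|\ge 2^{-nv}$ yields
\[
\frac{2^{v\alpha(x)}}{|P|^{\tau(x)}}
= 2^{v(\alpha(x)+n\tau(x))}\Big(\frac{2^{-nv}}{|P|}\Big)^{\tau(x)}
\le 2^{v(\alpha(x)+n\tau(x))},
\]
since $\tau \ge 0$ and the parenthesized factor lies in $(0,1]$. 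Multiplying by $|\varphi_v * f(x)|\chi_P(x)$ and applying the $\ell^{q(\cdot)}(L^{p_1(\cdot)})$ quasi-norm over $v \ge v_P^+$ produces a bound independent of $P$, namely the variable Besov quasi-norm of $f$ at smoothness $\alpha + n\tau$; the supremum over $P$ then delivers the second embedding.

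The only result borrowed from outside the present excerpt is the Sobolev embedding of \cite{AH}; the second step is elementary. The chief subtlety lies in the borderline case where $p_2$ and $p_1$ coincide on a set of positive measure. On such a set $\widetilde\alpha$ coincides with $\alpha+n\tau$, so step (i) is vacuous there, and the pointwise bound above (applied directly with $\widetilde\alpha$ in place of $\alpha+n\tau$) must carry the full weight of the argument; the assumption $\tau^- > 0$ is what ensures the smoothness gap from $\widetilde\alpha$ down to $\alpha$ is positive, keeping the calculation non-degenerate.
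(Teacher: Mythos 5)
Your proposal is correct and takes essentially the same route as the paper: both factor the embedding through the Almeida--H\"ast\"o Sobolev embedding $B^{\widetilde{\alpha}(\cdot)}_{p_2(\cdot),q(\cdot)}\hookrightarrow B^{\alpha(\cdot)+n\tau(\cdot)}_{p_1(\cdot),q(\cdot)}$ and then prove $B^{\alpha(\cdot)+n\tau(\cdot)}_{p_1(\cdot),q(\cdot)}\hookrightarrow \mathfrak{B}^{\alpha(\cdot),\tau(\cdot)}_{p_1(\cdot),q(\cdot)}$ by the elementary bound $|P|^{-\tau(x)}\leq 2^{nv\tau(x)}$ for $v\geq v_P^{+}$. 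Your single pointwise inequality simply merges the paper's two cases $|P|>1$ and $|P|\leq 1$ into one line, and the closing worry about the set where $p_2=p_1$ is unnecessary since the cited Sobolev embedding allows equality.
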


\begin{proof}
Using the Sobolev embeddings%
\begin{equation*}
B_{{p}_{2}{(\cdot )},q{(\cdot )}}^{{\alpha (\cdot )+n\tau (\cdot )+\frac{n}{{%
p}_{2}{(\cdot )}}-\frac{n}{{p}_{1}{(\cdot )}}}}\hookrightarrow B_{{p}_{1}{%
(\cdot )},q{(\cdot )}}^{{\alpha (\cdot )+n\tau (\cdot )}},
\end{equation*}%
see \cite[Theorem 6.4]{AH} it is sufficient to prove that $B_{{p}_{1}{(\cdot
)},q{(\cdot )}}^{{\alpha (\cdot )+n\tau (\cdot )}}\hookrightarrow \mathfrak{B%
}_{{p}_{1}{(\cdot )},q{(\cdot )}}^{{\alpha (\cdot ),\tau (\cdot )}}$. We
have 
\begin{equation*}
\sup_{P\in \mathcal{Q},|P|>1}\Big\|\Big(\frac{2^{v\alpha \left( \cdot
\right) }\varphi _{v}\ast f}{\left\vert P\right\vert ^{\tau (\cdot )}}\chi
_{P}\Big)_{v\geq v_{P}^{+}}\Big\|_{\ell ^{q(\cdot )}(L^{{p}_{1}{(\cdot )}%
})}\leq \big\|\left( 2^{v\alpha \left( \cdot \right) }\varphi _{v}\ast
f\right) _{v\in \mathbb{N}_{0}}\big\|_{\ell ^{q(\cdot )}(L^{{p}_{1}{(\cdot )}%
})}.
\end{equation*}%
In view of the definition of $B_{{p}_{1}{(\cdot )},q{(\cdot )}}^{{\alpha
(\cdot )}}$ spaces the last expression is bounded by 
\begin{equation*}
\big\|f\big\|_{B_{{p}_{1}{(\cdot )},q(\cdot )}^{\alpha (\cdot )}}\leq \big\|f%
\big\|_{B_{{p}_{1}{(\cdot )},q(\cdot )}^{\alpha (\cdot ){+n\tau (\cdot )}}}.
\end{equation*}%
Now we have the estimates 
\begin{eqnarray*}
&&\sup_{P\in \mathcal{Q},|P|\leq 1}\Big\|\Big(\frac{2^{v\alpha \left( \cdot
\right) }\varphi _{v}\ast f}{\left\vert P\right\vert ^{\tau (\cdot )}}\chi
_{P}\Big)_{v\geq v_{P}^{+}}\Big\|_{\ell ^{q(\cdot )}(L^{{p}_{1}{(\cdot )}})}
\\
&\leq &\sup_{P\in \mathcal{Q},|P|\leq 1}\big\|\left( 2^{v(\alpha \left(
\cdot \right) {+n\tau (\cdot ))+n\tau (\cdot )(v_{P}-v)}}\varphi _{v}\ast
f\right) _{v\geq v_{P}}\big\|_{\ell ^{q(\cdot )}(L^{{p}_{1}{(\cdot )}})} \\
&\leq &\sup_{P\in \mathcal{Q},|P|\leq 1}\big\|\left( 2^{v(\alpha \left(
\cdot \right) {+n\tau (\cdot ))}}\varphi _{v}\ast f\right) _{v\in \mathbb{N}%
_{0}}\big\|_{\ell ^{q(\cdot )}(L^{{p}_{1}{(\cdot )}})} \\
&\leq &\big\|f\big\|_{B_{{p}_{1}{(\cdot )},q(\cdot )}^{\alpha (\cdot ){%
+n\tau (\cdot )}}},
\end{eqnarray*}%
which completes the proof.
\end{proof}

\section{Atomic decomposition}

The idea of atomic decompositions leads back to M. Frazier and B. Jawerth in
their series of papers \cite{FJ86}, \cite{FJ90}. The main goal of this
section is to prove an atomic decomposition result for $\mathfrak{B}%
_{p(\cdot ),q(\cdot )}^{\alpha (\cdot ),\tau (\cdot )}$. We define for $a>0$%
, $\alpha :\mathbb{R}^{n}\rightarrow \mathbb{R}$ and $f\in \mathcal{S}%
^{\prime }(\mathbb{R}^{n})$, the Peetre maximal function%
\begin{equation*}
\varphi _{v}^{\ast ,a}2^{v\alpha (\cdot )}f(x)=\sup_{y\in \mathbb{R}^{n}}%
\frac{2^{v\alpha (y)}\left\vert \varphi _{v}\ast f(y)\right\vert }{\left(
1+2^{v}\left\vert x-y\right\vert \right) ^{a}},\qquad v\in \mathbb{N}_{0}.
\end{equation*}%
where $\varphi _{0}$ is replaced by $\Phi $. We now present a fundamental
characterization of spaces under consideration.

\begin{theorem}
\label{fun-char}Let $\mathbb{\tau }$ $\alpha \in C_{\mathrm{loc}}^{\log
},\tau ^{-}>0$ and $p,q\in \mathcal{P}_{0}^{\log }$. Let $m$ be as in Lemma %
\ref{Alm-Hastolemma1}, $a>\frac{m\tau ^{+}}{(\tau p)^{-}}$ and $\Phi $ and $%
\varphi $ satisfy \eqref{Ass1} and \eqref{Ass2}, respectively. Then%
\begin{equation}
\left\Vert f\right\Vert _{\mathfrak{B}_{p(\cdot ),q(\cdot )}^{\alpha (\cdot
),\tau (\cdot )}}^{\blacktriangledown }:=\sup_{P\in \mathcal{Q}}\Big\|\Big(%
\frac{\varphi _{v}^{\ast ,a}2^{v\alpha (\cdot )}f}{\left\vert P\right\vert
^{\tau (\cdot )}}\chi _{P}\Big)_{v\geq v_{P}^{+}}\Big\|_{\ell ^{q(\cdot
)}(L^{p(\cdot )})}  \label{B-equinorm1}
\end{equation}%
\textit{is an equivalent quasi-norm in }$\mathfrak{B}_{p(\cdot ),q(\cdot
)}^{\alpha (\cdot ),\tau (\cdot )}$.
\end{theorem}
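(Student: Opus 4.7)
My plan is to prove the equivalence in two steps. The trivial direction $\|f\|_{\mathfrak{B}_{p(\cdot),q(\cdot)}^{\alpha(\cdot),\tau(\cdot)}} \leq \|f\|_{\mathfrak{B}_{p(\cdot),q(\cdot)}^{\alpha(\cdot),\tau(\cdot)}}^{\blacktriangledown}$ follows at once by taking $y=x$ in the supremum defining the Peetre maximal function, which yields the pointwise bound $2^{v\alpha(x)}|\varphi_v\ast f(x)| \leq \varphi_v^{\ast,a}2^{v\alpha(\cdot)}f(x)$; applying $\|\cdot/|P|^{\tau(\cdot)}\chi_P\|_{\ell^{q(\cdot)}(L^{p(\cdot)})}$ and taking the supremum over $P\in\mathcal{Q}$ concludes this direction.

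The substance of the theorem is the reverse inequality $\|f\|_{\mathfrak{B}}^{\blacktriangledown} \lesssim \|f\|_{\mathfrak{B}}$, which I plan to reduce to Lemma \ref{Alm-Hastolemma1} after raising to a suitable power $r$. Fix $r>0$ with $r<(\tau p)^-/\tau^+$ and $ar>m$; both conditions can be satisfied simultaneously because of the hypothesis $a>m\tau^+/(\tau p)^-$, and since $(\tau p)^-/\tau^+\leq p^-$ this choice also guarantees $p/r,q/r\in\mathcal{P}^{\log}$ with $(p/r)^->1$. The central pointwise estimate I will establish is
\begin{equation*}
[\varphi_v^{\ast,a}2^{v\alpha(\cdot)}f(x)]^r \leq c\,\eta_{v,ar}\ast(2^{v\alpha(\cdot)}|\varphi_v\ast f|)^r(x).
\end{equation*}
To derive it, I use a Calder\'on-type reproducing identity $\varphi_v\ast f=\widetilde{\varphi}_v\ast\varphi_v\ast f$ with $\widetilde{\varphi}\in\mathcal{S}(\mathbb{R}^n)$ satisfying $\mathcal{F}\widetilde{\varphi}\equiv 1$ on $\mathrm{supp}\,\mathcal{F}\varphi$. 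Lemma \ref{r-trick} (with $R=N=2^v$, so $A=1$) then gives $|\varphi_v\ast f(y)|^r\leq c\,2^{vn}\int|\varphi_v\ast f(z)|^r(1+2^v|y-z|)^{-m'}\,dz$ for any prescribed decay $m'>n$. Multiplying by $2^{vr\alpha(y)}(1+2^v|x-y|)^{-ar}$, invoking Lemma \ref{DHR-lemma} to transport $2^{vr\alpha(y)}$ inside the $z$-integral as $2^{vr\alpha(z)}$ at the cost of reducing $m'$ by $c_{\log}(r\alpha)$, then applying the split $(1+2^v|x-z|)\leq(1+2^v|x-y|)(1+2^v|y-z|)$ to route the decay onto $|x-z|$, and finally taking the supremum over $y$ delivers the desired bound (the residual factor in $|y-z|$ being bounded by $1$ once $m'$ is large enough).

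With the pointwise bound in hand I use the scaling identity
\begin{equation*}
\Big\|\Big(\frac{F_v^r}{|P|^{r\tau(\cdot)}}\chi_P\Big)_v\Big\|_{\ell^{q(\cdot)/r}(L^{p(\cdot)/r})}=\Big\|\Big(\frac{F_v}{|P|^{\tau(\cdot)}}\chi_P\Big)_v\Big\|_{\ell^{q(\cdot)}(L^{p(\cdot)})}^r,
\end{equation*}
valid because $\chi_P^r=\chi_P$ and the $\ell^{q(\cdot)/r}(L^{p(\cdot)/r})$-modular at $F_v^r$ coincides with the $\ell^{q(\cdot)}(L^{p(\cdot)})$-modular at $F_v$. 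Raising the inner quasi-norm in the definition of $\|f\|_{\mathfrak{B}}^{\blacktriangledown}$ to the $r$-th power, substituting the pointwise estimate, and applying Lemma \ref{Alm-Hastolemma1} with exponents $p/r$ and $q/r$ and weight $r\tau$ (the required condition $(r\tau)^+<((r\tau)(p/r))^-=(\tau p)^-$ being exactly our choice of $r$) controls the result by $c\,\|(2^{v\alpha(\cdot)}|\varphi_v\ast f|)\|_{\ell^{q(\cdot)}(L_{p(\cdot)}^{\tau(\cdot)})}^r=c\,\|f\|_{\mathfrak{B}}^r$ after a final supremum over $P$. Undoing the $r$-th power concludes the argument.

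The main obstacle will be the careful bookkeeping of the competing decay rates in the weighted Peetre estimate: the kernel $\eta_{v,m'}$ must simultaneously absorb the variable-smoothness shift via Lemma \ref{DHR-lemma}, survive the split of decay between $x$- and $y$-variables with $ar$ units ultimately attached to $|x-z|$, and deliver a final decay of at least the $m$ demanded by Lemma \ref{Alm-Hastolemma1}. The quantitative hypothesis $a>m\tau^+/(\tau p)^-$ is precisely the minimal assumption that reconciles these lower bounds with a single admissible choice of $r$.
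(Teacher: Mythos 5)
Your proposal is correct and follows essentially the same route as the paper: the trivial pointwise bound for one direction, then the estimate $\left(\varphi_{v}^{\ast,a}2^{v\alpha(\cdot)}f(x)\right)^{r}\lesssim \eta_{v,ar}\ast\left(2^{v\alpha(\cdot)}|\varphi_{v}\ast f|\right)^{r}(x)$ obtained from Lemma \ref{r-trick}, Lemma \ref{DHR-lemma} and the splitting $(1+2^{v}|x-z|)\leq(1+2^{v}|x-y|)(1+2^{v}|y-z|)$, followed by Lemma \ref{Alm-Hastolemma1} applied to the rescaled exponents $p/r$, $q/r$ and weight $r\tau$. Your explicit parameter window $m/a<r<(\tau p)^{-}/\tau^{+}$ is in fact a slightly more careful version of the paper's choice of $t$, and it is exactly what the hypothesis $a>m\tau^{+}/(\tau p)^{-}$ guarantees.
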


\begin{proof}
We divide the proof in two steps.

\textit{Step }1\textit{. }It is easy to see that for any $f\in \mathcal{S}%
^{\prime }(\mathbb{R}^{n})$ with $\left\Vert f\right\Vert _{\mathfrak{B}%
_{p(\cdot ),q(\cdot )}^{\alpha (\cdot ),\tau (\cdot )}}^{\blacktriangledown
}<\infty $ and any $x\in \mathbb{R}^{n}$ we have%
\begin{equation*}
2^{v\alpha (x)}\left\vert \varphi _{v}\ast f(x)\right\vert \leq \varphi
_{v}^{\ast ,a}2^{v\alpha (\cdot )}f(x)\text{.}
\end{equation*}%
This shows that the right-hand side in \eqref{B-def} is less than or equal %
\eqref{B-equinorm1}.\vskip5pt

\textit{Step }2\textit{.} We will prove in this step that there is a
constant $C>0$ such that for every $f\in \mathfrak{B}_{p(\cdot ),q(\cdot
)}^{\alpha (\cdot ),\tau (\cdot )}$%
\begin{equation}
\big\|f\big\|_{\mathfrak{B}_{p(\cdot ),q(\cdot )}^{\alpha (\cdot ),\tau
(\cdot )}}^{\blacktriangledown }\leq C\big\|f\big\|_{\mathfrak{B}_{p(\cdot
),q(\cdot )}^{\alpha (\cdot ),\tau (\cdot )}}.  \label{estimate-B}
\end{equation}%
We choose $t>0$ such that $a>\frac{m}{t}>\frac{m}{p^{-}}$. By Lemmas \ref%
{r-trick}\ and \ref{DHR-lemma} the estimates%
\begin{eqnarray}
2^{v\alpha \left( y\right) }\left\vert \varphi _{v}\ast f(y)\right\vert
&\leq &C_{1}\text{ }2^{v\alpha \left( y\right) }\left( \eta _{v,w}\ast
|\varphi _{v}\ast f|^{t}(y)\right) ^{\frac{1}{t}}  \notag \\
&\leq &C_{2}\text{ }\left( \eta _{v,w-c_{\log }(\alpha )}\ast (2^{v\alpha
\left( \cdot \right) }|\varphi _{v}\ast f|)^{t}(y)\right) ^{\frac{1}{t}}
\label{esti-conv}
\end{eqnarray}%
are true for any $y\in \mathbb{R}^{n},v\in \mathbb{N}_{0}$ and any $w>n$.
Now divide both sides of \eqref{esti-conv} by $\left( 1+2^{v}\left\vert
x-y\right\vert \right) ^{a}$, in the right-hand side we use the inequality%
\begin{equation*}
\left( 1+2^{v}\left\vert x-y\right\vert \right) ^{-a}\leq \left(
1+2^{v}\left\vert x-z\right\vert \right) ^{-a}\left( 1+2^{v}\left\vert
y-z\right\vert \right) ^{a},\quad x,y,z\in \mathbb{R}^{n},
\end{equation*}%
in the left-hand side take the supremum over $y\in \mathbb{R}^{n}$ and get
for all $f\in \mathfrak{B}_{p(\cdot ),q(\cdot )}^{\alpha (\cdot ),\tau
(\cdot )}$, any $x\in P$ any $v\geq v_{P}^{+}$ and any $w>\max (n,at+c_{\log
}(\alpha ))$%
\begin{equation*}
\left( \varphi _{v}^{\ast ,a}2^{v\alpha \left( \cdot \right) }f(x)\right)
^{t}\leq C_{2}\text{ }\eta _{v,at}\ast (2^{v\alpha \left( \cdot \right)
}|\varphi _{v}\ast f|)^{t}(x)
\end{equation*}%
where $C_{2}>0$ is independent of $x,v$ and $f$. An application of Lemma \ref%
{Alm-Hastolemma1} gives that the left hand side of \eqref{estimate-B} is
bounded by 
\begin{eqnarray*}
&&C\sup_{P\in \mathcal{Q}}\Big\|\Big(\frac{\eta _{v,at}\ast (2^{v\alpha
\left( \cdot \right) }|\varphi _{v}\ast f|)^{t}}{\left\vert P\right\vert
^{\tau (\cdot )t}}\chi _{P}\Big)_{v\geq v_{P}^{+}}\Big\|_{\ell ^{\frac{%
q(\cdot )}{t}}(L^{\frac{p(\cdot )}{t}})}^{\frac{1}{t}} \\
&\leq &C\big\|\left( 2^{v\alpha \left( \cdot \right) }\varphi _{v}\ast
f\right) _{v}\big\|_{\ell ^{q(\cdot )}(L_{p(\cdot )}^{\tau (\cdot )})} \\
&=&C\big\|f\big\|_{\mathfrak{B}_{p(\cdot ),q(\cdot )}^{\alpha (\cdot ),\tau
(\cdot )}}.
\end{eqnarray*}%
\noindent The proof is complete.
\end{proof}

Atoms are the building blocks for the atomic decomposition.

\begin{definition}
\label{Atom-Def}Let $K\in \mathbb{N}_{0},L+1\in \mathbb{N}_{0}$ and let $%
\gamma >1$. A $K$-times continuous differentiable function $a\in C^{K}(%
\mathbb{R}^{n})$ is called $[K,L]$-atom centered at $Q_{v,m}$, $v\in \mathbb{%
N}_{0}$ and $m\in \mathbb{Z}^{n}$, if

\begin{equation}
\mathrm{supp}\text{ }a\subseteq \gamma Q_{v,m}  \label{supp-cond}
\end{equation}

\begin{equation}
|\partial ^{\beta }a(x)|\leq 2^{v(|\beta |+1/2)}\text{,\quad for\quad }0\leq
|\beta |\leq K,x\in \mathbb{R}^{n}  \label{diff-cond}
\end{equation}%
and if%
\begin{equation}
\int_{\mathbb{R}^{n}}x^{\beta }a(x)dx=0,\text{\quad for\quad }0\leq |\beta
|\leq L\text{ and }v\geq 1.  \label{mom-cond}
\end{equation}
\end{definition}

If the atom $a$ located at $Q_{v,m}$, that means if it fulfills %
\eqref{supp-cond}, then we will denote it by $a_{v,m}$. For $v=0$ or $L=-1$
there are no moment\ conditions\ \eqref{mom-cond} required.\vskip5pt

For proving the decomposition by atoms we need the following lemma, see
Frazier \& Jawerth \cite[Lemma 3.3]{FJ86}.

\begin{lemma}
\label{FJ-lemma}Let $\Phi $ and $\varphi $ satisfy, respectively, %
\eqref{Ass1} and \eqref{Ass2} and let $\varrho _{v,m}$ be an $\left[ K,L%
\right] $-atom. Then%
\begin{equation*}
\left\vert \varphi _{j}\ast \varrho _{v,m}(x)\right\vert \leq c\text{ }%
2^{(v-j)K+vn/2}\left( 1+2^{v}\left\vert x-x_{Q_{v,m}}\right\vert \right)
^{-M}
\end{equation*}%
if $v\leq j$, and%
\begin{equation*}
\left\vert \varphi _{j}\ast \varrho _{v,m}(x)\right\vert \leq c\text{ }%
2^{(j-v)(L+n+1)+vn/2}\left( 1+2^{j}\left\vert x-x_{Q_{v,m}}\right\vert
\right) ^{-M}
\end{equation*}%
if $v\geq j$, where $M$ is sufficiently large, $\varphi _{j}=2^{jn}\varphi
(2^{j}\cdot )$ and $\varphi _{0}$ is replaced by $\Phi $.
\end{lemma}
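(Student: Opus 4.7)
The estimate is a classical pairing between the cancellation on one side and the smoothness on the other: when $v\leq j$ we exploit the vanishing moments of $\varphi_j$ (which holds to all orders since $\mathcal F\varphi$ vanishes near the origin) together with the $C^K$-regularity of the atom; when $v\geq j$ we exploit the $L+1$ moments of $\varrho_{v,m}$ together with the smoothness and Schwartz decay of $\varphi_j$. Throughout, the spatial decay in $|x-x_{Q_{v,m}}|$ comes from splitting the domain of integration into the region near $x_{Q_{v,m}}$ (where the support of the atom lives) and writing the Schwartz decay of $\varphi_j$ in terms of $x-x_{Q_{v,m}}$.

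\textbf{Case $v\le j$.} I would subtract from $\varrho_{v,m}$ its Taylor polynomial $P_{K-1}(y;x)$ of degree $K-1$ centred at $x$; by the moment property of $\varphi_j$ this does not change the convolution, so
\[
\varphi_j\ast\varrho_{v,m}(x)=\int \varphi_j(x-y)\bigl[\varrho_{v,m}(y)-P_{K-1}(y;x)\bigr]\,dy .
\]
The Lagrange remainder combined with \eqref{diff-cond} gives $|\varrho_{v,m}(y)-P_{K-1}(y;x)|\lesssim |y-x|^{K}\,2^{v(K+n/2)}$. I then absorb the weight $|y-x|^{K}$ into one power of $2^{-jK}$ using the Schwartz decay of $\varphi$, i.e. $|\varphi_j(x-y)|\,|y-x|^K\lesssim 2^{-jK}\,2^{jn}(1+2^j|x-y|)^{-N}$ for any chosen $N$. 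The resulting integral produces the factor $2^{(v-j)K+vn/2}$; the remaining decay factor comes from restricting to $y\in \gamma Q_{v,m}$ (the support of $\varrho_{v,m}$), for which $1+2^j|x-y|\gtrsim 1+2^v|x-x_{Q_{v,m}}|$ after using $v\le j$ and the triangle inequality, yielding the claimed bound.

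\textbf{Case $v\ge j$.} Here I subtract the Taylor polynomial of $y\mapsto\varphi_j(x-y)$ of degree $L$ centred at $x_{Q_{v,m}}$, which is killed by the moment condition \eqref{mom-cond} of the atom:
\[
\varphi_j\ast \varrho_{v,m}(x)=\int \varrho_{v,m}(y)\Bigl[\varphi_j(x-y)-\sum_{|\beta|\le L}\tfrac{(x_{Q_{v,m}}-y)^{\beta}}{\beta!}\partial^{\beta}\varphi_j(x-x_{Q_{v,m}})\Bigr]dy .
\]
The Lagrange form of the remainder and the chain rule give a bound of order $|y-x_{Q_{v,m}}|^{L+1}\cdot 2^{j(L+1+n)}(1+2^j|x-\xi|)^{-M'}$ for some $\xi$ on the segment joining $y$ and $x_{Q_{v,m}}$. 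Since $y\in\gamma Q_{v,m}$ one has $|y-x_{Q_{v,m}}|\lesssim 2^{-v}$ and, because $v\ge j$, the point $\xi$ can be replaced (up to a multiplicative constant in the decay factor) by $x_{Q_{v,m}}$. Combining with $|\varrho_{v,m}(y)|\lesssim 2^{vn/2}$ from \eqref{diff-cond} and integrating over $\gamma Q_{v,m}$ (volume $\sim 2^{-vn}$) collects exactly the factor $2^{(j-v)(L+n+1)+vn/2}(1+2^j|x-x_{Q_{v,m}}|)^{-M}$.

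\textbf{Main obstacle.} The only subtle point is the simultaneous bookkeeping of (i) the correct power of $2$ and (ii) the pointwise decay $(1+2^{\min(v,j)}|x-x_{Q_{v,m}}|)^{-M}$. In Case $1$ one must shift the reference point of the Schwartz decay of $\varphi_j$ from $x-y$ to $x-x_{Q_{v,m}}$ using the support of the atom and $v\le j$; in Case $2$ one must do the same after estimating the Taylor remainder, using $v\ge j$ together with $|y-x_{Q_{v,m}}|\lesssim 2^{-v}\le 2^{-j}$. Apart from this, the argument is a direct Taylor/moment cancellation already carried out in \cite{FJ86}, so I would just outline the two steps above and refer to that reference for the routine computations.
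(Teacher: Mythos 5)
Your argument is correct and is exactly the classical Taylor/moment-cancellation proof of Frazier and Jawerth, which is precisely what the paper relies on here (it gives no proof of the lemma, only the citation to \cite{FJ86}). The only cosmetic point worth adding is that the degenerate cases ($j=0$, where $\Phi$ has no vanishing moments, $v=0$, and $L=-1$, where the atom has no moment conditions) require no cancellation at all and follow from the direct size estimate already contained in your computation.
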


Now we come to the atomic decomposition theorem.

\begin{theorem}
\label{atomic-dec}\textit{Let }$\alpha ,\tau \in C_{\mathrm{loc}}^{\log
},\tau ^{-}>0$ \textit{and }$p,q\in \mathcal{P}_{0}^{\log }$ with $%
0<q^{-}\leq q^{+}<\infty $\textit{. }Let $0<p^{-}\leq p^{+}<\infty $ and let 
$K,L+1\in \mathbb{N}_{0}$ such that%
\begin{equation}
K\geq ([\alpha ^{+}+n\tau ^{+}]+1)^{+},  \label{K,L,B-F-cond}
\end{equation}%
and%
\begin{equation}
L\geq \max (-1,[n(\frac{1}{\min (1,\frac{(\tau p)^{-}}{\tau ^{+}})}%
-1)-\alpha ^{-}]).  \label{K,L,B-cond}
\end{equation}%
Then $f\in \mathcal{S}^{\prime }(\mathbb{R}^{n})$ belongs to $\mathfrak{B}%
_{p(\cdot ),q(\cdot )}^{\alpha (\cdot ),\tau (\cdot )}$, if and only if it
can be represented as%
\begin{equation}
f=\sum\limits_{v=0}^{\infty }\sum\limits_{m\in \mathbb{Z}^{n}}\lambda
_{v,m}\varrho _{v,m},\text{ \ \ \ \ converging in }\mathcal{S}^{\prime }(%
\mathbb{R}^{n})\text{,}  \label{new-rep}
\end{equation}%
where $\varrho _{v,m}$ are $\left[ K,L\right] $-atoms and $\lambda
=\{\lambda _{v,m}\in \mathbb{C}:v\in \mathbb{N}_{0},m\in \mathbb{Z}^{n}\}\in 
\mathfrak{b}_{p(\cdot ),q(\cdot )}^{\alpha (\cdot ),\tau (\cdot )}$.
Furthermore, $\mathrm{inf}\big\|\lambda \big\|_{\mathfrak{b}_{p(\cdot
),q(\cdot )}^{\alpha (\cdot ),\tau (\cdot )}}$, where the infimum is taken
over admissible representations\ \eqref{new-rep}\textrm{, }is an equivalent
quasi-norm in $\mathfrak{B}_{p(\cdot ),q(\cdot )}^{\alpha (\cdot ),\tau
(\cdot )}$.
\end{theorem}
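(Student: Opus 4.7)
The statement splits into two implications that are proved by now-standard Frazier--Jawerth techniques adapted to the variable-exponent setting; the main inputs are the Peetre maximal characterisation (Theorem \ref{fun-char}), the pointwise kernel estimate (Lemma \ref{FJ-lemma}), the $\varphi$-transform isomorphism (Theorem \ref{phi-tran}) and the decay/convolution lemma (Lemma \ref{Key-lemma}).

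For the \emph{necessity} direction, I start from any $f\in\mathfrak{B}_{p(\cdot),q(\cdot)}^{\alpha(\cdot),\tau(\cdot)}$ and apply the Calder\'on reproducing formula \eqref{Ass4}, writing
\begin{equation*}
f=\sum_{m\in\mathbb{Z}^n}\tilde{\Phi}\ast f(m)\Psi(\cdot-m)+\sum_{v=1}^\infty 2^{-vn}\sum_{m\in\mathbb{Z}^n}\tilde{\varphi}_v\ast f(2^{-v}m)\psi_v(\cdot-2^{-v}m).
\end{equation*}
I set $\lambda_{v,m}:=c\,|Q_{v,m}|^{1/2}\,\tilde{\varphi}_v\ast f(2^{-v}m)$ (with the obvious change at $v=0$) and $\varrho_{v,m}:=c^{-1}|Q_{v,m}|^{-1/2}\psi_v(\cdot-2^{-v}m)$, where the constant $c$ is chosen so that each $\varrho_{v,m}$ satisfies the size bound \eqref{diff-cond}. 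Since $\psi\in\mathcal{S}$ has compactly supported Fourier transform away from the origin, its $L$-vanishing moments are immediate for any prescribed $L$, and the support can be replaced by an $[\cdot,\cdot]$-atom supported in $\gamma Q_{v,m}$ by the standard argument of replacing $\psi_v(\cdot-2^{-v}m)$ by a smooth cutoff (or simply enlarging $\gamma$ if one uses the convention that atoms need not have compact support but decay rapidly; I will use the compact-support version by truncation, noting that the truncation error is also an admissible atom). The bound $\|\lambda\|_{\mathfrak{b}_{p(\cdot),q(\cdot)}^{\alpha(\cdot),\tau(\cdot)}}\lesssim \|f\|_{\mathfrak{B}_{p(\cdot),q(\cdot)}^{\alpha(\cdot),\tau(\cdot)}}$ is then exactly the content of the boundedness of $S_\varphi$ in Theorem \ref{phi-tran}.

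For the \emph{sufficiency} direction, I fix an admissible representation $f=\sum_{v,m}\lambda_{v,m}\varrho_{v,m}$ and estimate, for every $j\in\mathbb{N}_0$,
\begin{equation*}
|\varphi_j\ast f(x)|\leq\sum_{v=0}^\infty\sum_{m\in\mathbb{Z}^n}|\lambda_{v,m}|\,|\varphi_j\ast\varrho_{v,m}(x)|
\end{equation*}
via Lemma \ref{FJ-lemma}, splitting into $v\leq j$ (where I use the size/derivative condition \eqref{diff-cond}, gaining $2^{-(j-v)K}$) and $v\geq j$ (where I use the moment condition \eqref{mom-cond}, gaining $2^{-(v-j)(L+n+1)}$). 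The cube selection $K\geq [\alpha^++n\tau^+]+1$ and the lower bound on $L$ in \eqref{K,L,B-cond} are precisely what is needed so that after multiplying by $2^{j\alpha(y)}$, shifting the smoothness using Lemma \ref{DHR-lemma}, and taking the $r$-trick with $r<(\tau p)^-/\tau^+$ (cf. Lemma \ref{lamda-equi}) the resulting weights satisfy
\begin{equation*}
\varphi_j^{\ast,a}2^{j\alpha(\cdot)}f(x)\lesssim\sum_{v=0}^\infty 2^{-|v-j|\delta}\sum_{m\in\mathbb{Z}^n}\frac{2^{v(\alpha(x)+n/2)}|\lambda_{v,m}|\chi_{v,m}(x)}{(1+2^{\min(v,j)}|x-x_{Q_{v,m}}|)^M}
\end{equation*}
for some $\delta>0$ and $M$ large. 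Absorbing the tail in $m$ via Lemma \ref{lamda-equi} (replacing $\lambda_{v,m}$ by $\lambda^\ast_{v,m,r,d}$) and then summing the geometric factor $2^{-|v-j|\delta}$ via Lemma \ref{Key-lemma} inside $\ell^{q(\cdot)}(L_{p(\cdot)}^{\tau(\cdot)})$, I obtain
\begin{equation*}
\bigl\|(\varphi_j^{\ast,a}2^{j\alpha(\cdot)}f)_j\bigr\|_{\ell^{q(\cdot)}(L_{p(\cdot)}^{\tau(\cdot)})}\lesssim\|\lambda\|_{\mathfrak{b}_{p(\cdot),q(\cdot)}^{\alpha(\cdot),\tau(\cdot)}},
\end{equation*}
which by Theorem \ref{fun-char} is equivalent to $\|f\|_{\mathfrak{B}_{p(\cdot),q(\cdot)}^{\alpha(\cdot),\tau(\cdot)}}$.

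The main technical obstacle is the sufficiency step, specifically the bookkeeping that shows the thresholds \eqref{K,L,B-F-cond}--\eqref{K,L,B-cond} are sharp enough to produce a true geometric factor $2^{-|v-j|\delta}$ after absorbing the variable weight $2^{j\alpha(\cdot)}|P|^{-\tau(\cdot)}$ inside the convolution by means of Lemma \ref{DHR-lemma}; once $\delta>0$ is secured, the Morrey-type maximal inequality of Lemma \ref{Key-lemma} closes the proof. The convergence of \eqref{new-rep} in $\mathcal{S}'(\mathbb{R}^n)$ is a byproduct of the same estimate together with the continuous embedding $\mathfrak{B}_{p(\cdot),q(\cdot)}^{\alpha(\cdot),\tau(\cdot)}\hookrightarrow\mathcal{S}'(\mathbb{R}^n)$ already established in Section 4.
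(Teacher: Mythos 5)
Your sufficiency direction follows the same route as the paper (kernel estimates of Lemma \ref{FJ-lemma} split according to $v\leq j$ and $v\geq j$, absorption of the variable weight via Lemma \ref{DHR-lemma}, then Lemmas \ref{lamda-equi}, \ref{Alm-Hastolemma1} and \ref{Key-lemma}, and finally Theorem \ref{fun-char}), and at the level of detail given it is consistent with the paper's Step 3, which itself defers to the arguments of \cite[Theorem 4.3]{D6}.

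The necessity direction, however, has a genuine gap. You take $\varrho_{v,m}=c^{-1}|Q_{v,m}|^{-1/2}\psi_v(\cdot-2^{-v}m)$ as the atoms. Since $\psi$ satisfies \eqref{Ass2}, it is band-limited and therefore cannot have compact support, so \eqref{supp-cond} fails, and your proposed repair does not work: if you truncate, writing $\psi_{v,m}=\chi\psi_{v,m}+(1-\chi)\psi_{v,m}$ with a cutoff $\chi$ adapted to $\gamma Q_{v,m}$, the truncated piece loses the exact vanishing moments \eqref{mom-cond} (these come precisely from $\mathcal{F}\psi$ vanishing near the origin, a property destroyed by multiplication by $\chi$), while the ``truncation error'' $(1-\chi)\psi_{v,m}$ is supported away from $Q_{v,m}$ and has no vanishing moments either, so it is not ``also an admissible atom'' and cannot simply be absorbed. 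What your construction yields is a molecular (or wavelet-type) decomposition, not the atomic one claimed in the theorem. The paper avoids this by not using $\psi_{v,m}$ as building blocks at all: it writes the Calder\'on reproducing formula in integral form over the dyadic cubes and defines $\varrho_{v,m}(x)=\lambda_{v,m}^{-1}2^{vn}\int_{Q_{v,m}}\widetilde{\varphi}(2^{v}(x-y))\,\psi_{v}\ast f(y)\,dy$ with a compactly supported kernel in the Frazier--Jawerth scheme, so that the support condition \eqref{supp-cond} holds by construction, the moment conditions come from the kernel, and the size condition \eqref{diff-cond} is checked by differentiating under the integral; the coefficients are taken as $\lambda_{v,m}=C\sup_{y\in Q_{v,m}}|\psi_{v}\ast f(y)|$ and are estimated through the Peetre maximal characterization of Theorem \ref{fun-char}, rather than through the point values $\widetilde{\varphi}_v\ast f(2^{-v}m)$ and Theorem \ref{phi-tran} as you propose (the latter would be adequate only for a decomposition whose building blocks need not be compactly supported). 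To fix your argument you should adopt this localized construction of the atoms; the coefficient estimate then requires the maximal-function characterization, since a supremum over $Q_{v,m}$, not a single point value, must be controlled.
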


The convergence in $\mathcal{S}^{\prime }(\mathbb{R}^{n})$ can be obtained
as a by-product of the proof using the same method as in \cite[Theorem 4.3]%
{D6}. If $p$, $q$, $\tau $, and $\alpha $ are constants, then the
restriction \eqref{K,L,B-F-cond}, and their counterparts, in the atomic
decomposition theorem are $K\geq ([\alpha +n\tau ]+1)^{+}$\ and $L\geq \max
(-1,[n(\frac{1}{\min (1,p)}-1)-\alpha ])$, which are essentially the
restrictions from the works of \cite[Theorem 3.12]{D4}.

\begin{proof}
{The proof follows the ideas in \cite[Theorem 6]{FJ86} and \cite{D6}.\vskip%
5pt }

\textit{Step 1}. Assume that $f\in \mathfrak{B}_{p(\cdot ),q(\cdot
)}^{\alpha (\cdot ),\tau (\cdot )}$ and let $\Phi $ and $\varphi $ satisfy,
respectively \eqref{Ass1} and \eqref{Ass2}. There exist \ functions $\Psi
\in \mathcal{S}(\mathbb{R}^{n})$ satisfying \eqref{Ass1} and $\psi \in 
\mathcal{S}(\mathbb{R}^{n})$ satisfying \eqref{Ass2} such that for all $\xi
\in \mathbb{R}^{n}$%
\begin{equation*}
f=\Psi \ast \widetilde{\Phi }\ast f+\sum_{v=1}^{\infty }\psi _{v}\ast 
\widetilde{\varphi }_{v}\ast f,
\end{equation*}%
see Section 3. Using the definition of the cubes $Q_{v,m}$ we obtain%
\begin{equation*}
f(x)=\sum\limits_{m\in \mathbb{Z}^{n}}\int_{Q_{0,m}}\widetilde{\Phi }%
(x-y)\Psi \ast f(y)dy+\sum_{v=1}^{\infty }2^{vn}\sum\limits_{m\in \mathbb{Z}%
^{n}}\int_{Q_{v,m}}\widetilde{\varphi }(2^{v}(x-y))\psi _{v}\ast f(y)dy,
\end{equation*}%
with convergence in $\mathcal{S}^{\prime }(\mathbb{R}^{n})$. We define for
every $v\in \mathbb{N}$ and all $m\in \mathbb{Z}^{n}$%
\begin{equation}
\lambda _{v,m}=C_{\theta }\sup_{y\in Q_{v,m}}\left\vert \psi _{v}\ast
f(y)\right\vert  \label{Coefficient}
\end{equation}%
where%
\begin{equation*}
C_{\theta }=\max \{\sup_{\left\vert y\right\vert \leq 1}\left\vert D^{\alpha
}\theta (y)\right\vert :\left\vert \alpha \right\vert \leq K\}.
\end{equation*}%
Define also%
\begin{equation}
\varrho _{v,m}(x)=\left\{ 
\begin{array}{ccc}
\frac{1}{\lambda _{v,m}}2^{vn}\int_{Q_{v,m}}\widetilde{\varphi }%
_{v}(2^{v}(x-y))\psi _{v}\ast f(y)dy & \text{if} & \lambda _{v,m}\neq 0 \\ 
0 & \text{if} & \lambda _{v,m}=0%
\end{array}%
.\right.  \label{K-L-atom}
\end{equation}%
Similarly we define for every $m\in \mathbb{Z}^{n}$ the numbers $\lambda
_{0,m}$ and the functions $\varrho _{0,m}$ taking in \eqref{Coefficient} and %
\eqref{K-L-atom} $v=0$ and replacing $\psi _{v}$ and $\widetilde{\varphi }$
by $\Psi $ and $\widetilde{\Phi }$, respectively. Let us now check that such 
$\varrho _{vm}$ are atoms in the sense of Definition \ref{Atom-Def}. Note
that the support and moment conditions are clear by \eqref{Ass1} and %
\eqref{Ass2}, respectively. It thus remains to check \eqref{diff-cond} in
Definition \ref{Atom-Def}. We have%
\begin{eqnarray*}
\left\vert D^{\beta }\varrho _{v,m}(x)\right\vert &\leq &\frac{%
2^{v(n+\left\vert \beta \right\vert )}}{C_{\theta }}\int_{Q_{v,m}}\left\vert
(D^{\beta }\widetilde{\varphi })(2^{v}(x-y))\right\vert \left\vert \psi
_{v}\ast f(y)\right\vert dy\Big(\sup_{y\in Q_{v,m}}\left\vert \psi _{v}\ast
f(y)\right\vert \Big)^{-1} \\
&\leq &\frac{2^{v(n+\left\vert \beta \right\vert )}}{C_{\theta }}%
\int_{Q_{v,m}}\left\vert (D^{\beta }\widetilde{\varphi })(2^{v}(x-y))\right%
\vert dy \\
&\leq &2^{v(n+\left\vert \beta \right\vert )}\left\vert Q_{v,m}\right\vert \\
&\leq &2^{v\left\vert \beta \right\vert }.
\end{eqnarray*}%
The modifications for the terms with $v=0$ are obvious.\vskip5pt

\textit{Step }2\textit{.} Next we show that there is a constant $c>0$ such
that $\big\|\lambda \big\|_{\mathfrak{b}_{p(\cdot ),q(\cdot )}^{\alpha
(\cdot ),\tau (\cdot )}}\leq c\big\|f\big\|_{\mathfrak{B}_{p(\cdot ),q(\cdot
)}^{\alpha (\cdot ),\tau (\cdot )}}$. For that reason we exploit the
equivalent quasi-norms given in Theorem \ref{fun-char} involving Peetre's
maximal function. Let $v\in \mathbb{N}$. Taking into account that $%
\left\vert x-y\right\vert \leq c$ $2^{-v}$ for $x,y\in Q_{v,m}$ we obtain%
\begin{equation*}
2^{v(\alpha \left( x\right) -\alpha \left( y\right) )}\leq \frac{c_{\log
}(\alpha )v}{\log (e+\frac{1}{\left\vert x-y\right\vert })}\leq \frac{%
c_{\log }(\alpha )v}{\log (e+\frac{2^{v}}{c})}\leq c
\end{equation*}%
if $v\geq \left[ \log _{2}c\right] +2$. If $0<v<\left[ \log _{2}c\right] +2$%
, then $2^{v(\alpha \left( x\right) -\alpha \left( y\right) )}\leq
2^{v(\alpha ^{+}-\alpha ^{-})}\leq c$. Therefore,%
\begin{equation*}
2^{v\alpha \left( x\right) }\left\vert \psi _{v}\ast f(y)\right\vert \leq c%
\text{ }2^{v\alpha \left( y\right) }\left\vert \psi _{v}\ast f(y)\right\vert
\end{equation*}%
for any $x,y\in Q_{v,m}$ and any $v\in \mathbb{N}$. Hence,%
\begin{eqnarray*}
\sum\limits_{m\in \mathbb{Z}^{n}}\lambda _{v,m}2^{v\alpha \left( x\right)
}\chi _{v,m}(x) &=&C_{\theta }\sum\limits_{m\in \mathbb{Z}^{n}}2^{v\alpha
\left( x\right) }\sup_{y\in Q_{v,m}}\left\vert \psi _{v}\ast f(y)\right\vert
\chi _{v,m}(x) \\
&\leq &c\sum\limits_{m\in \mathbb{Z}^{n}}\sup_{\left\vert z\right\vert \leq c%
\text{ }2^{-v}}\frac{2^{v\alpha (x-z)}\left\vert \psi _{v}\ast
f(x-z)\right\vert }{(1+2^{v}\left\vert z\right\vert )^{a}}(1+2^{v}\left\vert
z\right\vert )^{a}\chi _{v,m}(x) \\
&\leq &c\text{ }\psi _{v}^{\ast ,a}2^{v\alpha \left( \cdot \right)
}f(x)\sum\limits_{m\in \mathbb{Z}^{n}}\chi _{v,m}(x) \\
&=&c\text{ }\psi _{v}^{\ast ,a}2^{v\alpha \left( \cdot \right) }f(x),
\end{eqnarray*}%
where $a>\frac{m\tau ^{+}}{(\tau p)^{-}}$ and we have used $%
\sum\limits_{m\in \mathbb{Z}^{n}}\chi _{v,m}(x)=1$. This estimate and its
counterpart for $v=0$ (which can be obtained by a similar calculation) give%
\begin{equation*}
\left\Vert \lambda \right\Vert _{\mathfrak{b}_{p(\cdot ),q(\cdot )}^{\alpha
(\cdot ),\tau (\cdot )}}\leq c\left\Vert \left( \psi _{v}^{\ast
,a}2^{v\alpha \left( \cdot \right) }f\right) _{v}\right\Vert _{\ell
^{q(\cdot )}(L_{p(\cdot )}^{\tau (\cdot )})}\leq c\left\Vert f\right\Vert _{%
\mathfrak{B}_{p(\cdot ),q(\cdot )}^{\alpha (\cdot ),\tau (\cdot )}},
\end{equation*}%
by Theorem \ref{fun-char}.\vskip5pt

\textit{Step }3\textit{.} Assume that $f$ can be represented by %
\eqref{new-rep}, with $K$ and $L$ satisfying \eqref{K,L,B-F-cond} and %
\eqref{K,L,B-cond}, respectively. Similar arguments of {\cite[Theorem 4.3]%
{D6}}, by using Lemmas \ref{Alm-Hastolemma1}, \ref{Key-lemma}, show that $%
f\in \mathfrak{B}_{p\left( \cdot \right) ,q\left( \cdot \right) }^{\alpha
\left( \cdot \right) ,\tau \left( \cdot \right) }$ and that for some $c>0$, $%
\left\Vert f\right\Vert _{\mathfrak{B}_{p\left( \cdot \right) ,q\left( \cdot
\right) }^{\alpha \left( \cdot \right) ,\tau \left( \cdot \right) }}\leq
c\left\Vert \lambda \right\Vert _{\mathfrak{b}_{p\left( \cdot \right)
,q\left( \cdot \right) }^{\alpha \left( \cdot \right) ,\tau \left( \cdot
\right) }}$.
\end{proof}

\section{Appendix}

Here we present more technical proofs of the Lemmas.\vskip5pt

{\emph{Proof of Lemma} \ref{Alm-Hastolemma1}.} By the scaling argument, we
see that it suffices to consider when 
\begin{equation}
\left\Vert \left( f_{v}\right) _{v}\right\Vert _{\ell ^{q(\cdot
)}(L_{p(\cdot )}^{\tau (\cdot )})}\leq 1  \label{modular2}
\end{equation}
and show that for any dyadic cube $P$ 
\begin{equation*}
\sum_{v=v_{P}^{+}}^{\infty }\Big\|\Big|\frac{c\text{ }\eta _{v,m}\ast
\left\vert f_{v}\right\vert }{|P|^{\tau (\cdot )}}\Big|^{q(\cdot )}\chi _{P}%
\Big\|_{\frac{p(\cdot )}{q(\cdot )}}\leq 1
\end{equation*}%
for some constant $c>0$. We distinguish two cases:

\noindent \textbf{Case 1.} $l(P)>1$. Let $Q_{v}\subset P$ be a cube, with $%
\ell \left( Q_{v}\right) =2^{-v}$ and $x\in Q_{v}\subset P$. We have%
\begin{eqnarray*}
&&\eta _{v,m}\ast |f_{v}|(x) \\
&=&2^{vn}\int_{\mathbb{R}^{n}}\frac{\left\vert f_{v}(z)\right\vert }{\left(
1+2^{v}\left\vert x-z\right\vert \right) ^{m}}dz \\
&=&\int_{3Q_{v}}\cdot \cdot \cdot dz+\sum_{k\in \mathbb{Z}^{n},\left\Vert
k\right\Vert _{\infty }\geq 2}\int_{Q_{v}^{k}}\cdot \cdot \cdot dz \\
&=&J_{v}^{1}(f_{v}\chi _{3Q_{v}})(x)+\sum_{k\in \mathbb{Z}^{n},\left\Vert
k\right\Vert _{\infty }\geq 2}J_{v,k}^{2}(f_{v}\chi _{Q_{v}^{k}})(x),
\end{eqnarray*}%
where $Q_{v}^{k}=Q_{v}+kl(Q_{v})$. Let $0<r<\frac{1}{2}\min (p^{-},q^{-},2)$
and define $\tilde{p}=\frac{p}{r}$ and $\tilde{q}=\frac{q}{r}$. Then
clearly, $\frac{1}{\tilde{p}}+\frac{1}{\tilde{q}}\leq 1$. Thus we obtain%
\begin{eqnarray}
&&\Big\|\Big(\frac{\eta _{v,m}\ast \left\vert f_{v}\right\vert }{|P|^{\tau
(\cdot )}}\chi _{P}\Big)_{v\in \mathbb{N}_{0}}\Big\|_{L^{p(\cdot )}(\ell
^{q(\cdot )})}^{r}  \notag \\
&\leq &\Big\|\Big(\frac{J_{v}^{1}(f_{v}\chi _{3Q_{v}})}{|P|^{\tau (\cdot )}}%
\chi _{P}\Big)_{v\in \mathbb{N}_{0}}\Big\|_{L^{p(\cdot )}(\ell ^{q(\cdot
)})}^{r}  \label{sum2} \\
&&+\sum_{k\in \mathbb{Z}^{n},\left\Vert k\right\Vert _{\infty }\geq 2}\Big\|%
\Big(\frac{J_{v,k}^{2}(f_{v}\chi _{Q_{v}^{k}})}{|P|^{\tau (\cdot )}}\chi _{P}%
\Big)_{v\in \mathbb{N}_{0}}\Big\|_{L^{p(\cdot )}(\ell ^{q(\cdot )})}^{r}.
\label{sum1}
\end{eqnarray}%
\textit{Estimate of }\eqref{sum2}. We will prove that \eqref{sum2} is
bounded\ by a constant independent of $P$. Clearly, we need to show that 
\begin{eqnarray*}
\Big\|\Big|\frac{c\text{ }J_{v}^{1}(f_{v}\chi _{3Q_{v}})}{|P|^{\tau (\cdot )}%
}\Big|^{q(\cdot )}\chi _{P}\Big\|_{\frac{p(\cdot )}{q(\cdot )}} &\leq &\Big\|%
\Big|\frac{f_{v}}{|P|^{\tau (\cdot )}}\Big|^{q(\cdot )}\chi _{3P}\Big\|_{%
\frac{p(\cdot )}{q(\cdot )}}+2^{-v} \\
&=&\delta
\end{eqnarray*}%
for some positive constant $c$. This claim can be reformulated as showing
that%
\begin{equation}
\Big\|\delta ^{-\frac{1}{q\left( \cdot \right) }}\frac{c\text{ }%
J_{v}^{1}(f_{v}\chi _{3Q_{v}})}{|P|^{\tau (\cdot )}}\chi _{P}\Big\|_{p(\cdot
)}\leq 1.  \label{est-Jnegative}
\end{equation}%
Let $d>0$ be such that $\tau ^{+}<d<\left( \tau p\right) ^{-}$. We have%
\begin{equation*}
\frac{M_{3Q_{v}}\left( f_{v}\right) }{|P|^{\tau (\cdot )}}=\Big(\frac{\big(%
M_{3Q_{v}}\left( f_{v}\right) \big)^{\frac{d}{\tau (\cdot )}}}{|P|^{d}}\Big)%
^{\frac{\tau (\cdot )}{d}}.
\end{equation*}%
Hence, we will prove that%
\begin{equation*}
\Big\|\frac{\delta ^{-\frac{d}{q\left( \cdot \right) \tau (\cdot )}}\big(%
M_{3Q_{v}}\left( f_{v}\right) \left( \cdot \right) \big)^{\frac{d}{\tau
(\cdot )}}}{|P|^{d}}\chi _{P}\Big\|_{\frac{p(\cdot )\tau (\cdot )}{d}%
}\lesssim 1.
\end{equation*}%
By H\"{o}lder's inequality, 
\begin{equation*}
|Q_{v}|M_{3Q_{v}}\big(\left\vert f_{v}\right\vert ^{\frac{d}{\tau (\cdot )}}%
\big)\lesssim \Big\|\frac{\left\vert f_{v}\right\vert ^{\frac{1}{\tau (\cdot
)}}}{|3Q_{v}|}\chi _{3Q_{v}}\Big\|_{p(\cdot )\tau (\cdot )}^{d}\big\|%
|3Q_{v}|\chi _{3Q_{v}}\big\|_{t(\cdot )}^{d},
\end{equation*}%
where $\frac{1}{d}=\frac{1}{p(\cdot )\tau (\cdot )}+\frac{1}{t(\cdot )}$.
The second quasi-norm is bounded, while the first is bounded if and only if%
\begin{equation*}
\Big\|\frac{f_{v}}{|Q_{v}|^{\tau (\cdot )}}\chi _{3Q_{v}}\Big\|_{p(\cdot
)}\lesssim 1.
\end{equation*}%
{Notice that }$3Q_{v}{\subset }${$\cup _{h=1}^{3^{n}}Q_{v}^{h}$, where $%
\{Q_{v}^{h}\}_{h=1}^{3^{n}}$ are disjoint dyadic cubes with side length $%
l(Q_{v}^{h})=l(Q_{v})$. Therefore $\chi _{3Q_{v}}\leq \sum_{h=1}^{3^{n}}\chi
_{Q_{v}^{h}}$ and 
\begin{equation*}
\Big\|\frac{f_{v}}{|Q_{v}|^{\tau (\cdot )}}\chi _{3Q_{v}}\Big\|_{p(\cdot
)}\leq c\sum_{h=1}^{3^{n}}\Big\|\frac{f_{v}}{|Q_{v}^{h}|^{\tau \left( \cdot
\right) }}\chi _{Q_{v}^{h}}\Big\|_{p(\cdot )}\lesssim 1,
\end{equation*}%
where we used }\eqref{modular2}. We can use Lemma \ref{DHHR-estimate} to
obtain that%
\begin{equation*}
\left( \beta M_{3Q_{v}}\left( f_{v}\right) \right) ^{\frac{d}{\tau (x)}}
\end{equation*}%
can be estimated by 
\begin{equation*}
M_{3Q_{v}}\big(\big|f_{v}\big|^{\frac{d}{\tau (\cdot )}}\big)+\left\vert
Q_{v}\right\vert ^{s}g(x)
\end{equation*}%
for any $x\in Q_{v}$ and any $s>0$, where $g$ is the same function as in
Theorem \ref{Sobolev-emb}. Taking into account that $\frac{1}{q}$ and $\tau $
are log-H\"{o}lder continuous, $\delta \in \lbrack 2^{-v},1+2^{-v}]$, by
Lemma \ref{DHR-lemma};%
\begin{equation*}
\delta ^{-\frac{d}{q\left( x\right) \tau (x)}}\left( \beta M_{3Q_{v}}\left(
f_{v}\right) \right) ^{\frac{d}{\tau (x)}}
\end{equation*}%
does not exceed%
\begin{equation*}
M_{3Q_{v}}\big(\big|\delta ^{-\frac{1}{q\left( \cdot \right) }}f_{v}\big|^{%
\frac{d}{\tau (\cdot )}}\big)+2^{\frac{vd}{q\left( x\right) \tau (x)}%
}\left\vert Q_{v}\right\vert ^{s}g(x)\lesssim M_{3Q_{v}}\big(\big|\delta ^{-%
\frac{1}{q\left( \cdot \right) }}f_{v}\big|^{\frac{d}{\tau \left( \cdot
\right) }}\big)+h(x),
\end{equation*}%
where we used $\max_{x\in Q_{v}}2^{vd/q\left( x\right) \tau (x)}\left\vert
Q_{v}\right\vert ^{s}\leq 1$, since $s>0$ can be taken large enough, where $%
h $ is the same function as in Theorem \ref{Sobolev-emb}. Therefore, 
\begin{eqnarray*}
\Big\|\frac{\Big(\delta ^{-\frac{1}{q\left( \cdot \right) }}M_{3Q_{v}}\left(
f_{v}\right) \Big)^{\frac{d}{\tau (\cdot )}}}{|P|^{d}}\Big\|_{\frac{p(\cdot
)\tau (\cdot )}{d}} &\lesssim &\Big\|\mathcal{M}\big(\frac{\delta ^{-\frac{d%
}{q\left( \cdot \right) \tau \left( \cdot \right) }}\left\vert
f_{v}\right\vert ^{\frac{d}{\tau (\cdot )}}}{|P|^{d}}\chi _{3Q_{v}}\big)%
\Big\|_{\frac{p(\cdot )\tau (\cdot )}{d}}+c \\
&\lesssim &\Big\|\frac{\delta ^{-\frac{d}{q\left( \cdot \right) \tau \left(
\cdot \right) }}\left\vert f_{v}\right\vert ^{\frac{d}{\tau (\cdot )}}}{%
|P|^{d}}\chi _{3P}\Big\|_{\frac{p(\cdot )\tau (\cdot )}{d}}+c,
\end{eqnarray*}%
since $\frac{p\tau }{d}\in \mathcal{P}^{\mathrm{log}}$, $(\frac{p\tau }{d}%
)^{-}>1$ and $\mathcal{M}:L^{\frac{p(\cdot )\tau (\cdot )}{d}}\rightarrow L^{%
\frac{p(\cdot )\tau (\cdot )}{d}}$ is bounded. The last norm is bounded if
and only if 
\begin{equation*}
\Big\|\frac{\delta ^{-\frac{1}{q\left( \cdot \right) }}\left\vert
f_{v}\right\vert \chi _{3P}}{|P|^{\tau (\cdot )}}\Big\|_{p(\cdot )}\lesssim
1,
\end{equation*}%
which follows immediately from the definition of $\delta $.\ 

\textit{Estimate of }\eqref{sum1}. We will prove that \eqref{sum1} is
bounded\ by a constant independent of $P$. The summation in \eqref{sum1} can
be rewritten as%
\begin{equation*}
\sum_{k\in \mathbb{Z},|k|\leq 4\sqrt{n}}\cdot \cdot \cdot +\sum_{k\in 
\mathbb{Z}^{n},|k|>4\sqrt{n}}\cdot \cdot \cdot .
\end{equation*}%
The estimation of the first sum follows in the same manner as in the
estimate of $J_{v}^{1}(f_{v})$, so we need only to estimate the second sum.
Let now prove that%
\begin{equation*}
\Big\|\Big(|k|^{b(\cdot )}\frac{J_{v,k}^{2}(f_{v}\chi _{Q_{v}^{k}})}{%
|P|^{\tau (\cdot )}}\chi _{P}\Big)_{v\in \mathbb{N}_{0}}\Big\|_{L^{p(\cdot
)}(\ell ^{q(\cdot )})}\lesssim \Big\|\Big(\frac{f_{v}}{|\tilde{Q}^{k}|^{\tau
(\cdot )}}\chi _{\tilde{Q}^{k}}\Big)_{v\in \mathbb{N}_{0}}\Big\|_{L^{p(\cdot
)}(\ell ^{q(\cdot )})},
\end{equation*}%
where $\tilde{Q}^{k}=Q(c_{P},2\left\vert k\right\vert l(P))$ and%
\begin{equation*}
b(\cdot )=m-n(1+\frac{1}{t^{-}})\tau ^{+}-2\frac{c_{\log }\big(\frac{d}{%
q\tau }\big)\tau (\cdot )}{d}-\frac{s\tau (\cdot )}{d}
\end{equation*}%
and $s$ will be chosen later. Again, by the scaling argument, we see that it
suffices to consider when the last norm is less than or equal $1$ and show
that the modular of a constant times the function on the left-hand side is
bounded. In particular, we will show that for any dyadic cube $P$ 
\begin{equation*}
\sum_{v=0}^{\infty }\Big\|\Big|\frac{c\text{ }|k|^{b(\cdot
)}J_{v,k}^{2}(f_{v}\chi _{Q_{v}^{k}})}{|P|^{\tau (\cdot )}}\Big|^{q(\cdot
)}\chi _{P}\Big\|_{\frac{p(\cdot )}{q(\cdot )}}\leq 1
\end{equation*}%
for some positive constant $c$. This estimate follows from the inequality%
\begin{eqnarray*}
\Big\|\Big|\frac{c\text{ }|k|^{b(\cdot )}J_{v,k}^{2}(f_{v}\chi _{Q_{v}^{k}})%
}{|P|^{\tau (\cdot )}}\Big|^{q(\cdot )}\chi _{P}\Big\|_{\frac{p(\cdot )}{%
q(\cdot )}} &\leq &\Big\|\Big|\frac{f_{v}}{|\tilde{Q}^{k}|^{\tau (\cdot )}}%
\Big|^{q(\cdot )}\chi _{\tilde{Q}^{k}}\Big\|_{\frac{p(\cdot )}{q(\cdot )}%
}+2^{-v} \\
&=&\delta
\end{eqnarray*}%
for any $v\in \mathbb{N}_{0}$. This claim can be reformulated as showing that%
\begin{equation}
\Big\|\delta ^{-\frac{d}{q\left( \cdot \right) \tau (\cdot )}}\frac{\big(%
|k|^{b(\cdot )}J_{v,k}^{2}(f_{v}\chi _{Q_{v}^{k}})\big)^{\frac{d}{\tau
(\cdot )}}}{|P|^{d}}\chi _{P}\Big\|_{\frac{p(\cdot )\tau (\cdot )}{d}%
}\lesssim 1.  \label{est-J2}
\end{equation}%
Let $z\in Q_{v}^{k},$ $x\in Q_{v}$ with $k\in \mathbb{Z}^{n}$ and $|k|>4%
\sqrt{n}$. Then $z=h+k2^{-v}$ for some $h\in Q_{v}$, $\left\vert
x-z\right\vert \geq \left\vert k\right\vert 2^{-v-1}$. Hence 
\begin{eqnarray*}
\delta ^{-\frac{1}{q\left( x\right) }}|k|^{b\left( x\right)
}J_{v,k}^{2}(f_{v}\chi _{Q_{v}^{k}}) &\lesssim &\delta ^{-\frac{1}{q\left(
x\right) }}|k|^{b\left( x\right) -m}M_{Q_{v}^{k}}\left( f_{v}\right) \\
&\lesssim &\delta ^{-\frac{1}{q\left( x\right) }}|k|^{b\left( x\right)
-m+n(1+\frac{1}{t^{-}})\tau ^{+}}M_{Q_{v}^{k}}(\left\vert k\right\vert
^{-n(1+\frac{1}{t\left( \cdot \right) })\tau \left( \cdot \right) }f_{v}) \\
&\lesssim &\delta ^{-\frac{1}{q\left( x\right) }}|k|^{-(2c_{\log }(\frac{d}{%
q\tau })+s)\frac{s\tau (\cdot )}{d}}M_{Q_{v}^{k}}(\left\vert k\right\vert
^{-n(1+\frac{1}{t\left( \cdot \right) })\tau \left( \cdot \right) }f_{v})
\end{eqnarray*}%
for any $x\in Q_{v}$ and any $v\in \mathbb{N}_{0}$, where $\frac{1}{d}=\frac{%
1}{p(\cdot )\tau (\cdot )}+\frac{1}{t(\cdot )}$. Observe that $%
Q_{v}^{k}\subset Q(x,\left\vert k\right\vert 2^{1-v})=\tilde{Q}_{v}^{k}$. By
H\"{o}lder's inequality, 
\begin{equation*}
|\tilde{Q}_{v}^{k}|M_{\tilde{Q}_{v}^{k}}\big(\left\vert k\right\vert ^{-n(1+%
\frac{1}{t\left( \cdot \right) })d}\left\vert f_{v}\right\vert ^{\frac{d}{%
\tau (\cdot )}}\big)\lesssim \Big\|\frac{\left\vert f_{v}\right\vert ^{\frac{%
1}{\tau (\cdot )}}}{|\tilde{Q}_{v}^{k}|}\chi _{\tilde{Q}_{v}^{k}}\Big\|%
_{p(\cdot )\tau (\cdot )}^{d}\big\||\tilde{Q}_{v}^{k}|\left\vert
k\right\vert ^{-n(1+\frac{1}{t(\cdot )})}\chi _{\tilde{Q}_{v}^{k}}\big\|%
_{t(\cdot )}^{d}.
\end{equation*}%
The second quasi-norm is bounded, while the first is bounded if and only if%
\begin{equation*}
\Big\|\frac{f_{v}}{|\tilde{Q}_{v}^{k}|^{\tau (\cdot )}}\chi _{\tilde{Q}%
_{v}^{k}}\Big\|_{p(\cdot )}\lesssim 1,\quad v\in \mathbb{N}_{0},
\end{equation*}%
{which follows by }\eqref{modular2}. Again by Lemma \ref{DHHR-estimate},%
\begin{equation*}
\Big(\beta M_{\tilde{Q}_{v}^{k}}\big(\left\vert k\right\vert ^{-n(1+\frac{1}{%
t\left( \cdot \right) })\tau \left( \cdot \right) }f_{v}\big)\Big)^{\frac{d}{%
\tau (x)}}
\end{equation*}%
does not exceed 
\begin{equation*}
M_{\tilde{Q}_{v}^{k}}\big(\big|\left\vert k\right\vert ^{-n(1+\frac{1}{%
t\left( \cdot \right) })\tau \left( \cdot \right) }f_{v}\big|^{\frac{d}{\tau
(\cdot )}}\big)+\min (1,\left\vert k\right\vert ^{ns}2^{(1-v)ns})\big(\left(
e+\left\vert x\right\vert \right) ^{-s}+M_{\tilde{Q}_{v}^{k}}\left( \left(
e+\left\vert y\right\vert \right) ^{-s}\right) \big)
\end{equation*}%
for any $s>0$ large enough. Hence, 
\begin{equation*}
\delta ^{-\frac{d}{q\left( x\right) \tau (x)}}\Big(\beta M_{\tilde{Q}%
_{v}^{k}}\big(\left\vert k\right\vert ^{-n(1+\frac{1}{t\left( \cdot \right) }%
)\tau \left( \cdot \right) }f_{v}\big)\Big)^{\frac{d}{\tau (x)}}
\end{equation*}%
is bounded by%
\begin{eqnarray*}
&&c\left\vert k\right\vert ^{2c_{\log }(\frac{d}{q\tau })}M_{\tilde{Q}%
_{v}^{k}}\big(\delta ^{-\frac{d}{q\left( \cdot \right) \tau (\cdot )}%
}\left\vert k\right\vert ^{-n(1+\frac{1}{t\left( \cdot \right) }%
)d}\left\vert f_{v}\right\vert ^{\frac{d}{\tau \left( \cdot \right) }}\big)%
+2^{\frac{vd}{\left( q\tau \right) ^{-}}}\min (1,\left\vert k\right\vert
^{ns}2^{(1-v)ns})h\left( x\right) \\
&\lesssim &\left\vert k\right\vert ^{2c_{\log }(\frac{d}{q\tau })}M_{\tilde{Q%
}_{v}^{k}}\big(\delta ^{-\frac{d}{q\left( \cdot \right) \tau (\cdot )}%
}\left\vert k\right\vert ^{-n(1+\frac{1}{t\left( \cdot \right) }%
)d}\left\vert f_{v}\right\vert ^{\frac{d}{\tau \left( \cdot \right) }}\big)%
+\left\vert k\right\vert ^{ns}h\left( x\right) ,
\end{eqnarray*}%
where $s>0$ large enough such that $s>\frac{d}{n\left( q\tau \right) ^{-}}$.
Therefore, the left-hand side of \eqref{est-J2} is bounded by 
\begin{eqnarray*}
&&\Big\|c\text{ }\mathcal{M}\Big(|k|^{-nd}\frac{\delta ^{-\frac{d}{q\left(
\cdot \right) \tau \left( \cdot \right) }}\left\vert f_{v}\right\vert ^{%
\frac{d}{\tau (\cdot )}}\chi _{\tilde{Q}^{k}}}{|P|^{d}}\Big)\Big\|_{\frac{%
p(\cdot )\tau (\cdot )}{d}}+C \\
&\lesssim &\Big\|\frac{\delta ^{-\frac{d}{q\left( \cdot \right) \tau \left(
\cdot \right) }}\left\vert f_{v}\right\vert ^{\frac{d}{\tau (\cdot )}}\chi _{%
\tilde{Q}^{k}}}{|\tilde{Q}^{k}|^{d}}\Big\|_{\frac{p(\cdot )\tau (\cdot )}{d}%
}+C,
\end{eqnarray*}%
after using the fact that $\mathcal{M}:L^{\frac{p(\cdot )\tau (\cdot )}{d}%
}\rightarrow L^{\frac{p(\cdot )\tau (\cdot )}{d}}$ is bounded. The last norm
is bounded if and only if 
\begin{equation*}
\Big\|\frac{\delta ^{-\frac{1}{q\left( \cdot \right) }}f_{v}\chi _{\tilde{Q}%
^{k}}}{|\tilde{Q}^{k}|^{\tau (\cdot )}}\Big\|_{p(\cdot )}\leq 1,
\end{equation*}%
which follows immediately from the definition of $\delta $. Since $m$ can be
taken large enough, then the second sum in \eqref{sum1} is bounded by%
\begin{eqnarray*}
\sum_{k\in \mathbb{Z}^{n},|k|>4\sqrt{n}}|k|^{-b^{-}r}\Big\|\Big(\frac{f_{v}}{%
|\tilde{Q}^{k}|^{\tau (\cdot )}}\chi _{\tilde{Q}^{k}}\Big)_{v\geq 2v_{P}^{+}}%
\Big\|_{L^{p(\cdot )}(\ell ^{q(\cdot )})}^{r} &\leq &\sum_{k\in \mathbb{Z}%
^{n},|k|>4\sqrt{n}}|k|^{-b^{-}r}\big\|(f_{v})_{v}\big\|_{\ell ^{q(\cdot
)}(L_{p(\cdot )}^{\tau (\cdot )})}^{r} \\
&\lesssim &1.
\end{eqnarray*}%
\textbf{Case 2. }$l(P)\leq 1$. As before, 
\begin{equation*}
\eta _{v,m}\ast \left\vert f_{v}\right\vert (x)\lesssim J_{v}^{1}(f_{v}\chi
_{3P})(x)+\sum_{k\in \mathbb{Z}^{n},\left\Vert k\right\Vert _{\infty }\geq
2}J_{v,k}^{2}(f_{v}\chi _{P+kl(P)})(x).
\end{equation*}%
We see that 
\begin{equation*}
J_{v}^{1}(f_{v}\chi _{3P})(x)=\eta _{v,m}\ast \left( \left\vert
f_{v}\right\vert \chi _{3P}\right) (x),\quad x\in P
\end{equation*}%
and {since }$\tau ${\ is log-H\"{o}lder continuous, we have }%
\begin{equation*}
\left\vert P\right\vert ^{-\tau (x)}\leq c\left\vert P\right\vert ^{-\tau
(y)}(1+2^{v_{P}}\left\vert x-y\right\vert )^{c_{\log }\left( \tau \right)
}\leq c\left\vert P\right\vert ^{-\tau (y)}
\end{equation*}%
for any $x\in P$ any $y\in 3P$ and any $v\geq v_{P}$. Hence%
\begin{equation*}
\left\vert P\right\vert ^{-\tau (x)}J_{v}^{1}(f_{v}\chi _{3P})(x)\lesssim
\eta _{v,m-c_{\log }\left( \tau \right) }\ast \big(\left\vert P\right\vert
^{-\tau (\cdot )}\left\vert f_{v}\right\vert \chi _{3P}\big)(x),\quad x\in P.
\end{equation*}%
Also, we have 
\begin{equation*}
\left\vert P\right\vert ^{-\tau (x)}J_{v,k}^{2}(f_{v}\chi
_{P+kl(P)})(x)\lesssim \eta _{v,m-c_{\log }\left( \tau \right) }\ast \big(%
\left\vert P\right\vert ^{-\tau (\cdot )}\left\vert f_{v}\right\vert \chi
_{P+kl(P)}\big)(x).
\end{equation*}%
As before, we obtain 
\begin{equation*}
\sum_{v=v_{P}}^{\infty }\Big\|\Big|\frac{c\text{ }\eta _{v,m}\ast f_{v}}{%
|P|^{\tau (\cdot )}}\Big|^{q(\cdot )}\chi _{P}\Big\|_{\frac{p(\cdot )}{%
q(\cdot )}}\leq 1,
\end{equation*}%
where we did not need to use Lemma \ref{DHHR-estimate}, which could be used
only to move $\left\vert P\right\vert ^{\tau (\cdot )}$ inside the
convolution. The proof is complete.\vskip5pt

{\emph{Proof of Lemma }\ref{key-estimate1}. We claim that}%
\begin{equation}
2^{-v\frac{n}{r}}|\omega _{v}\ast f(x)|\lesssim \big\|\omega _{v}\ast f\big\|%
_{\widetilde{L_{\tau (\cdot )}^{p(\cdot )}}}  \label{key-estimate}
\end{equation}%
{\ for any }$x\in \mathbb{R}^{n}$, any $0<r<p^{-}$ and any $v\in \mathbb{N}%
_{0}$. Indeed. By {Lemma \ref{r-trick}, we have 
\begin{equation*}
\left\vert \omega _{v}\ast f\left( x\right) \right\vert \leq c\text{ }(\eta
_{v,m}\ast \left\vert \omega _{v}\ast f\right\vert ^{r}(x))^{1/r},
\end{equation*}%
for any }$x\in \mathbb{R}^{n}$, any $m>n$, $0<r<p^{-}$ and any $v\in \mathbb{%
N}_{0}$. We write%
\begin{equation*}
\eta _{v,m}\ast \left\vert \omega _{v}\ast f\right\vert ^{r}(x)\lesssim
\sum_{i=0}^{\infty }2^{-i(m-n)}M_{B(x,2^{i-v})}(\left\vert \omega _{v}\ast
f\right\vert ^{r}),
\end{equation*}%
where the implicit constant independent of $x$ and $x$. H\"{o}lder's
inequality leads to%
\begin{eqnarray*}
M_{B(x,2^{i-v})}(\left\vert \omega _{v}\ast f\right\vert ^{r}) &\lesssim
&2^{(v-i)n}\big\|(\omega _{v}\ast f)\chi _{B(x,2^{i-v})}\big\|_{p(\cdot
)}^{r}\big\|\chi _{B(x,2^{i-v})}\big\|_{h(\cdot )}^{r} \\
&\lesssim &2^{(v-i)n+inr\tau ^{+}}\big\|\omega _{v}\ast f\big\|_{\widetilde{%
L_{\tau (\cdot )}^{p(\cdot )}}}^{r}\big\|\chi _{B(x,2^{i})}\big\|_{h(\cdot
)}^{r},
\end{eqnarray*}%
where $\frac{1}{r}=\frac{1}{p(\cdot )}+\frac{1}{h(\cdot )}$. Making $m$
large enough \eqref{key-estimate} follows.

Let $P$ be any dyadic cube. {We use again Lemma \ref{r-trick}, in the form%
\begin{equation*}
\left\vert \theta _{v}\ast \omega _{v}\ast f\left( x\right) \right\vert \leq
c\text{ }(\eta _{v,m}\ast |\omega _{v}\ast f|^{r}(x))^{1/r},
\end{equation*}%
where $0<r<\min (p^{-},\frac{(p\tau )^{-}}{\tau ^{+}})$, $m>n$ large enough
and $x\in P$. }By the scaling argument, we see that it suffices to prove that%
\begin{equation*}
\Big\|\frac{\theta _{v}\ast \omega _{v}\ast f}{|P|^{\tau (\cdot )}}\chi _{P}%
\Big\|_{p(\cdot )}\lesssim 1
\end{equation*}%
for any dyadic cube $P$, with $l(P)\geq 1$, whenever $\big\|\omega _{v}\ast f%
\big\|_{\widetilde{L_{\tau (\cdot )}^{p(\cdot )}}}\leq 1$. Let $Q_{v}\subset
P$ be a cube, with $l\left( Q_{v}\right) =2^{-v}$ and $x\in Q_{v}\subset P$.
As in Lemma {\ref{Alm-Hastolemma1},}%
\begin{equation*}
\eta _{v,m}\ast \left\vert \omega _{v}\ast f\right\vert ^{r}(x)\leq
J_{v}^{1}(\left\vert \omega _{v}\ast f\right\vert ^{r}\chi
_{3Q_{v}})(x)+\sum_{k\in \mathbb{Z}^{n},\left\Vert k\right\Vert _{\infty
}\geq 2}J_{v,k}^{2}(\left\vert \omega _{v}\ast f\right\vert ^{r}\chi
_{Q_{v}^{k}})(x).
\end{equation*}%
{Thus we obtain%
\begin{eqnarray}
\Big\|\frac{\theta _{v}\ast \omega _{v}\ast f}{|P|^{\tau (\cdot )}}\chi _{P}%
\Big\|_{p(\cdot )}^{r} &\lesssim &\Big\|\frac{J_{v}^{1}(\left\vert \omega
_{v}\ast f\right\vert ^{r}\chi _{3Q_{v}})}{|P|^{r\tau (\cdot )}}\chi _{P}%
\Big\|_{\frac{p(\cdot )}{r}}  \notag \\
&&+\sum_{k\in \mathbb{Z}^{n},\left\Vert k\right\Vert _{\infty }\geq 2}\Big\|%
\frac{J_{v,k}^{2}(\left\vert \omega _{v}\ast f\right\vert ^{r}\chi
_{Q_{v}^{k}})}{|P|^{r\tau (\cdot )}}\chi _{P}\Big\|_{\frac{p(\cdot )}{r}}.
\label{norm1}
\end{eqnarray}%
Let us prove that the first norm on the right-hand side is bounded. We have%
\begin{equation*}
|J_{v}^{1}(\left\vert \omega _{v}\ast f\right\vert ^{r}\chi
_{3Q_{v}})(x)|\lesssim M_{3Q_{v}}\left( \left\vert \omega _{v}\ast
f\right\vert ^{r}\right) \left( x\right) .
\end{equation*}%
}Let $d>0$ be such that $\tau ^{+}<d<\frac{(p\tau )^{-}}{r}$. We have%
\begin{equation*}
\frac{M_{3Q_{v}}\left( \left\vert \omega _{v}\ast f\right\vert ^{r}\right) }{%
|P|^{r\tau (\cdot )}}=\Big(2^{v\frac{nd}{\tau \left( \cdot \right) }}\frac{%
\big(M_{3Q_{v}}\big(2^{-vn}|\omega _{v}\ast f|^{r}\big)\big)^{\frac{d}{\tau
(\cdot )}}}{|P|^{dr}}\Big)^{\frac{\tau (\cdot )}{d}}.
\end{equation*}%
By \eqref{key-estimate}, Lemma \ref{DHHR-estimate} and the fact that $2^{-%
\frac{vnd}{\tau \left( x\right) }}\approx 2^{-\frac{vnd}{\tau \left(
y\right) }}$, $x,y\in 3Q_{v}$,%
\begin{equation*}
2^{v\frac{nd}{\tau \left( x\right) }}\big(\beta M_{3Q_{v}}\big(%
2^{-vnr}\left\vert \omega _{v}\ast f\right\vert ^{r}\big)\big)^{\frac{d}{%
\tau (x)}}\lesssim M_{3Q_{v}}\big(\left\vert \omega _{v}\ast f\right\vert ^{%
\frac{rd}{\tau \left( \cdot \right) }}\big)+2^{\frac{vnrd}{\tau ^{-}}%
}2^{-snv}h(x)
\end{equation*}%
for any $s>0$ large enought and any $x\in Q_{v}$, where the implicit
constant is independent of $x$ and $v$. Hence%
\begin{eqnarray*}
\Big\|\Big(\frac{J_{v}^{1}(\left\vert \omega _{v}\ast f\right\vert ^{r}\chi
_{3Q_{v}})}{|P|^{r\tau (\cdot )}}\chi _{P}\Big)^{\frac{d}{\tau (\cdot )}}%
\Big\|_{\frac{p(\cdot )\tau \left( \cdot \right) }{dr}} &\lesssim &\Big\|%
\mathcal{M}\big(\frac{\left\vert \omega _{v}\ast f\right\vert ^{\frac{dr}{%
\tau (\cdot )}}\chi _{3Q_{v}}}{|P|^{rd}}\big)\Big\|_{\frac{p(\cdot )\tau
\left( \cdot \right) }{dr}}+c \\
&\lesssim &\Big\|\frac{\left\vert \omega _{v}\ast f\right\vert ^{\frac{dr}{%
\tau (\cdot )}}}{|P|^{rd}}\chi _{3P}\Big\|_{\frac{p(\cdot )\tau \left( \cdot
\right) }{dr}}+c,
\end{eqnarray*}%
after using the fact that $\mathcal{M}:L^{\frac{p(\cdot )\tau (\cdot )}{rd}%
}\rightarrow L^{\frac{p(\cdot )\tau (\cdot )}{rd}}$ is bounded. The last
norm is bounded by $1$ if and only if%
\begin{equation*}
\Big\|\frac{\omega _{v}\ast f}{|P|^{\tau \left( \cdot \right) }}\chi _{3P}%
\Big\|_{p(\cdot )}\lesssim 1.
\end{equation*}%
{Notice that $3P=\cup _{h=1}^{3^{n}}P_{h}$, where $\{P_{h}\}_{h=1}^{3^{n}}$
are disjoint dyadic cubes with side length $l(P_{h})=l(P)$. Therefore $\chi
_{3P}=\sum_{h=1}^{3^{n}}\chi _{P_{h}}$ and 
\begin{eqnarray*}
\Big\|\frac{\omega _{v}\ast f}{|P|^{\tau \left( \cdot \right) }}\chi _{3P}%
\Big\|_{p(\cdot )} &\leq &c\sum_{h=1}^{3^{n}}\Big\|\frac{\omega _{v}\ast f}{%
|P_{h}|^{\tau \left( \cdot \right) }}\chi _{P_{h}}\Big\|_{p(\cdot )} \\
&\lesssim &\big\|\omega _{v}\ast f\big\|_{\widetilde{L_{\tau (\cdot
)}^{p(\cdot )}}} \\
&\lesssim &1.
\end{eqnarray*}%
Using a combination of the arguments used in the corresponding case of the
proof of }Lemma {\ref{Alm-Hastolemma1}} and those used in the estimate of $%
J_{v}^{1}$ above, we arrive at the desired estimate.\vskip5pt

{\emph{Proof of Lemma }\ref{lamda-equi}. Obviously, }%
\begin{equation*}
{\big\|\lambda \big\|_{\mathfrak{b}_{p\left( \cdot \right) ,q\left( \cdot
\right) }^{\alpha \left( \cdot \right) ,\tau (\cdot )}}\leq \big\|\lambda
_{r,d}^{\ast }\big\|_{\mathfrak{b}_{p\left( \cdot \right) ,q\left( \cdot
\right) }^{\alpha \left( \cdot \right) ,\tau (\cdot )}}.}
\end{equation*}%
{We will prove that }%
\begin{equation*}
{\big\|\lambda _{r,d}^{\ast }\big\|_{\mathfrak{b}_{p\left( \cdot \right)
,q\left( \cdot \right) }^{\alpha \left( \cdot \right) ,\tau (\cdot
)}}\lesssim \big\|\lambda \big\|_{\mathfrak{b}_{p\left( \cdot \right)
,q\left( \cdot \right) }^{\alpha \left( \cdot \right) ,\tau (\cdot )}}.}
\end{equation*}%
{\ For each $k\in \mathbb{N}_{0}$ define }%
\begin{equation*}
{\Omega _{k}:=\{h\in \mathbb{Z}^{n}:2^{k-1}<2^{v}\left\vert
2^{-v}h-2^{-v}m\right\vert \leq 2^{k}\}}
\end{equation*}%
{and }%
\begin{equation*}
{\Omega _{0}:=\{h\in \mathbb{Z}^{n}:2^{v}\left\vert
2^{-v}h-2^{-v}m\right\vert \leq 1\}.}
\end{equation*}%
{Then for any $x\in Q_{v,m}\cap P$, }%
\begin{equation}
{\sum_{h\in \mathbb{Z}^{n}}\frac{2^{vr\left( \alpha \left( x\right)
+n/2\right) }|\lambda _{v,h}|^{r}}{(1+2^{v}|2^{-v}h-2^{-v}m|)^{d}}}
\label{key-est1.2}
\end{equation}%
{can be rewritten as%
\begin{eqnarray}
&&\sum\limits_{k=0}^{\infty }\sum\limits_{h\in \Omega _{k}}\frac{2^{vr\left(
\alpha \left( x\right) +n/2\right) }\left\vert \lambda _{v,h}\right\vert ^{r}%
}{\left( 1+2^{v}\left\vert 2^{-v}h-2^{-v}m\right\vert \right) ^{d}}  \notag
\\
&\lesssim &\sum\limits_{k=0}^{\infty }2^{-dk}\sum\limits_{h\in \Omega
_{k}}2^{vr\left( \alpha \left( x\right) +n/2\right) }\left\vert \lambda
_{v,h}\right\vert ^{r}  \notag \\
&=&\sum\limits_{k=0}^{\infty }2^{(n-d)k+(v-k)n+vr\left( \alpha \left(
x\right) +n/2\right) }\int\limits_{\cup _{z\in \Omega
_{k}}Q_{v,z}}\sum\limits_{h\in \Omega _{k}}\left\vert \lambda
_{v,h}\right\vert ^{r}\chi _{v,h}(y)dy.  \label{key-est2}
\end{eqnarray}%
Let $x\in Q_{v,m}\cap P$ and $y\in \cup _{z\in \Omega _{k}}Q_{v,z}$. Then $%
y\in Q_{v,z}$ for some $z\in \Omega _{k}$ and $2^{k-1}<2^{v}\left\vert
2^{-v}z-2^{-v}m\right\vert \leq 2^{k}$. From this it follows that $y$ is
located in the cube $Q(x,2^{k-v+3})$. }Therefore, \eqref{key-est2} does not
exceed%
\begin{eqnarray*}
&&c\sum\limits_{k=0}^{\infty
}2^{(n-d+a)k+(v-k)n}\int\limits_{Q(x,2^{k-v+3})}2^{v(\alpha \left( y\right) +%
\frac{n}{2})r}\sum\limits_{h\in \Omega _{k}}\left\vert \lambda
_{v,h}\right\vert ^{r}\chi _{v,h}(y)dy \\
&=&c\sum\limits_{k=0}^{\infty }2^{(n-d+a)k}M_{Q(x,2^{k-v+3})}\left(
g_{v}\right)
\end{eqnarray*}%
for some positive constant $c$ independent of $v$ and $k$ and 
\begin{equation*}
g_{v}=2^{v(\alpha \left( \cdot \right) +\frac{n}{2})r}\sum\limits_{h\in 
\mathbb{Z}^{n}}\left\vert \lambda _{v,h}\right\vert ^{r}\chi _{v,h},\quad
v\geq v_{P}^{+}.
\end{equation*}%
Observe that 
\begin{equation*}
M_{Q(x,2^{k-v+3})}\left( g_{v}\right) \lesssim 2^{kL}\eta _{v,L}\ast g_{v}(x)
\end{equation*}%
for any $x\in Q_{v,m}\cap P${\ and any }$L>n$ large enought, where the
implicit constant is indepenendt of $x,k$ and $v${. Therefore }%
\eqref{key-est1.2} is bounded by 
\begin{equation*}
c\text{ }\eta _{v,L}\ast g_{v}(x),\quad x\in {Q_{v,m}\cap P.}
\end{equation*}%
Thanks to Lemma {\ref{Alm-Hastolemma1}, we have}%
\begin{eqnarray*}
{\big\|\lambda _{r,d}^{\ast }\big\|_{\mathfrak{b}_{p\left( \cdot \right)
,q\left( \cdot \right) }^{\alpha \left( \cdot \right) ,\tau (\cdot )}}} &{%
\lesssim }&\big\|(\eta _{v,L}\ast g_{v})_{v}\big\|_{\ell ^{\frac{q(\cdot )}{r%
}}(L_{\frac{p(\cdot )}{r}}^{r\tau (\cdot )})}^{\frac{1}{r}} \\
&{\lesssim }&\big\|(g_{v})_{v}\big\|_{\ell ^{\frac{q(\cdot )}{r}}(L_{\frac{%
p(\cdot )}{r}}^{r\tau (\cdot )})}^{\frac{1}{r}} \\
&{\lesssim }&{\big\|(}\lambda _{v})_{v}{\big\|_{\mathfrak{b}_{p\left( \cdot
\right) ,q\left( \cdot \right) }^{\alpha \left( \cdot \right) ,\tau (\cdot
)}},}
\end{eqnarray*}%
provided that $d$ is sufficiently large such that $d>n+a+L$. The proof of
the lemma is thus complete.

\bigskip

Douadi \ Drihem and Zeghad Zouheyr

M'sila University

Department of Mathematics

Laboratory of Functional Analysis and Geometry of Spaces

P.O. Box 166, M'sila 28000, Algeria

E-mail:\texttt{douadidr@yahoo.fr, douadi.drihem@univ-msila.dz, (Douadi
Drihem)}

\texttt{zgzoheyr@gmail.com, zouheyr.zeghad@univ-msila.dz (Zeghad Zouheyr)}

\end{document}